\documentclass[12pt]{amsart} 
\usepackage{latexsym}
\usepackage{amsmath,amssymb,amscd,amsthm}

\usepackage[]{amsfonts} \usepackage[]{amsmath}
\usepackage[]{amssymb} \usepackage[]{latexsym}
\usepackage{graphicx,color} \usepackage{amsthm}
\usepackage{fancyhdr} \usepackage{url}
\usepackage{amssymb} \usepackage{epigraph}
\usepackage{mathrsfs} \usepackage{graphicx}
\usepackage{cancel} \usepackage[toc,page]{appendix}
\usepackage{amsmath,amscd}\usepackage{mathtools}
\usepackage[all,cmtip]{xy}\usepackage{enumerate}
\usepackage{setspace}
\setlength{\parskip}{0 pt}

\usepackage{anysize}
\usepackage{float}
\numberwithin{equation}{section}
\usepackage{anysize}
\marginsize{2.5cm}{2.5cm}{1.5cm}{4cm}

\newtheorem{theorem}{Theorem}
\newtheorem{thm}[theorem]{Theorem}

\newtheorem{cor}[theorem]{Corollary}
\newtheorem{lemma}[theorem]{Lemma}
\newtheorem{prop}[theorem]{Proposition}
\theoremstyle{definition}

\newtheorem{problem}[theorem]{Problem}
\theoremstyle{remark}

\newcommand{\ds}{\displaystyle}

\newcommand{\eps}{\varepsilon}

\newcommand{\cF}{\mathcal{F}}
\newcommand{\cW}{\mathcal{W}}
\newcommand{\cP}{\mathcal{P}}

\newcommand{\cK}{\mathcal{K}}

\newcommand{\cE}{\mathcal{E}}

\newcommand{\cU}{\mathcal{U}}
\newcommand{\cL}{\mathcal{L}}

\newcommand{\SB}{\text{SB}}

\newcommand{\Q}{\mathbb{Q}}
\newcommand{\emb}{\hookrightarrow}
\newcommand{\N}{\mathbb{N}}
\newcommand{\R}{\mathbb{R}}
\newcommand{\lemb}{\lhook\joinrel\longrightarrow}
\newcommand{\NN}{{\mathbb{N}^{<\mathbb{N}}}}
\newcommand{\WF}{\text{WF}}
\newcommand{\IF}{\text{IF}}
\newcommand{\Tr}{\text{Tr}}

\begin{document}

\title[On the complexity of some classes of Banach spaces]{On the complexity of some classes of Banach spaces and non-universality.}
\subjclass[2010]{Primary: 46B20} 
 \keywords{ Banach-Saks, Dunford-Pettis, analytic Radon-Nikodym, complete continuous, Schur, 
  unconditionally converging operators, weakly compact operators, local structure, non-universality, $\ell_p$-Baire sum, descriptive set theory, trees.}
\author{ B. M. Braga.}
\address{Department of Mathematics, Kent State University,
Kent, OH 44242, USA}\email{demendoncabraga@gmail.com}
\date{}

\begin{abstract}
These notes are dedicated to the study of the complexity of several classes of separable Banach spaces. We compute the complexity of the Banach-Saks 
property, the alternating Banach-Saks property, the complete continuous property, and the LUST property. We also show that the weak Banach-Saks 
property, the Schur property, the Dunford-Pettis property, the analytic Radon-Nikodym property, the set of Banach spaces whose set of unconditionally converging operators is complemented in its bounded operators,  the 
set of Banach spaces whose set of weakly compact operators is complemented in its bounded operators, and the set of Banach spaces whose set of Banach-Saks operators is complemented 
in its bounded operators, are all non Borel in $\SB$. At last, we  give several applications of those results to non-universality  results.\end{abstract}
\maketitle

\section{Introduction.}

Our goal for these notes is to study the  complexity of certain classes of Banach spaces, hence, these notes lie in the intersection of descriptive set theory and the theory of 
Banach spaces.

First, we study two problems related to special classes of operators on separable Banach spaces being complemented in the space of its bounded operators or not. Specifically, we will 
show that both the set of Banach spaces with its unconditionally converging operators complemented in its bounded operators, and the set  of Banach spaces with its weakly compact 
operators complemented in its bounded operators, are non Borel. The first one is actually complete coanalytic. In both of these problems, we will be using results of \cite{BBG} 
concerning the complementability of those ideals  in its space of bounded operators and the fact that the space itself contains $c_0$.

Next, we study the complexity of other classes of Banach spaces, namely, Banach spaces with the so called Banach-Saks property, alternating Banach-Saks property, 
and weak Banach-Saks property. We show that the first two of them are complete coanalytic sets in the class of separable Banach spaces, and that the third one is at least non Borel (it is also shown that the weak Banach-Saks property is at most $\Pi^1_2$). In order 
to show some of those results we use the geometric sequential characterizations of Banach spaces with the Banach-Saks property and the alternating Banach-Saks property given by B. 
Beauzamy (see \cite{Be}). The stability under $\ell_2$-sums of the Banach-Saks property shown by J. R. Partington (\cite{P}) will  also be of great importance in our proofs.

 It is also shown that the set of Banach spaces whose set of Banach-Saks operators is complemented in its bounded operators is non Borel. For this, a result by J. Diestel and C. J. 
 Seifert (\cite{DS}) that says that weakly compact operators $T:C(K)\to X$, where $K$ is a compact Hausdorff space, are Banach-Saks operators, will be essential.

In order to show that the class of Banach spaces with the Schur property is non Borel we will rely on the stability of this property under $\ell_1$-sums shown by B. Tanbay (\cite{T}), and, when dealing 
with the Dunford-Pettis property, the same will be shown using  one of its characterizations (see \cite{R}, and \cite{Fa}) and Tanbay's result. It is also shown that the Schur property is at least $\Pi^1_2$. 

Following, we show that the set of separable Banach spaces with the complete continuous property \textbf{CCP} is complete coanalytic. For this we use a characterization of this property in terms of the existence of a special kind of bush on the space (see \cite{G}). Also, we show that the analytic Radon-Nikodym property is non Borel.

We also deal with the local structure of separable Banach spaces by showing that the set of Banach spaces with local unconditional structure is Borel.

At last, we give several applications of the theorems obtained among these notes to non-universality like results. In all the results proven in these notes we will be applying techniques related to descriptive set theory and its applications to the geometry of Banach spaces that can be found in 
\cite{D}, and \cite{S}. Also, this work was highly motivated by D. Puglisi's paper on the position of $\cK(X,Y)$ in $\cL(X,Y)$, in this paper Puglisi shows that the set of pairs of 
separable Banach spaces $(X,Y)$ such that the ideal of compact operators from $X$ to $Y$ is complemented in the bounded operators from $X$ to $Y$ is non Borel (see \cite{Pu}).

\section{Background.}

A separable metric space is said to be a \emph{Polish space} if there exists an equivalent metric in which it is complete. A continuous image of a Polish space into another Polish 
space is called an \emph{analytic set} and a set whose complement is analytic is called \emph{coanalytic}. A measure space $(X,\mathcal{A})$, where $X$ is a set and $\mathcal{A}$ is a 
$\sigma$-algebra of subsets of $X$, is called a \emph{standard Borel space} if there exists a Polish topology on this set whose Borel $\sigma$-algebra coincides with $\mathcal{A}$. We 
define Borel, analytic and coanalytic sets in standard Borel spaces by saying that these are the sets that, by considering a compatible Polish topology, are Borel, analytic, and 
coanalytic, respectively. Observe that this is well defined, i.e., this definition does not depend on the Polish topology itself but on its Borel structure. A function between two 
standard Borel spaces is called \emph{Borel measurable} if the inverse image of each Borel subset of its codomain is Borel in its domain. We usually refer to Borel measurable functions 
just as Borel functions. Notice that, if you consider a Borel function between two standard Borel spaces, its inverse image of analytic sets (resp. coanalytic) is analytic (resp. 
coanalytic) (see \cite{S}, \emph{proposition $1.3$}, pag. $50$). 

Given a Polish space $X$ the set of analytic (resp. coanalytic) subsets of $X$ is denoted by $\Sigma^1_1(X)$ (resp. $\Pi^1_1(X)$). Hence, the terminology $\Sigma^1_1$-set (resp. $\Pi^1_1$-set) is used to refer to analytic sets (resp. coanalytic sets).

Let $X$ be a standard Borel space. An analytic (resp. coanalytic) subset $A\subset X$ is said to be \emph{complete analytic} (resp. \emph{complete coanalytic}) if for each standard Borel space $Y$ and each $B\subset Y$ analytic (resp. coanalytic), there exists a Borel function $f:Y\to X$ such that $f^{-1}(A)=B$. This function is called a \emph{Borel reduction} from $B$ to $A$, and $B$ is said to be \emph{Borel reducible} to $A$. 

Let $X$ be a standard Borel space. We call a subset $A\subset X$ \emph{$\Sigma^1_1$-hard} (resp. \emph{$\Pi^1_1$-hard}) if for each standard Borel space $Y$ and each $B\subset Y$ analytic (resp. coanalytic) there exists a Borel reduction from $B$ to $A$. Therefore, to say that a set $A\subset X$ is $\Sigma^1_1$-hard (resp. $\Pi^1_1$-hard)  means that $A$ is at least as complex as  $\Sigma^1_1$-sets (resp. $\Pi^1_1$-sets) in the projective hierarchy. With this terminology we have that $A\subset X$ is complete analytic (resp. complete coanalytic) if, and only if, $A$ is  $\Sigma^1_1$-hard (resp. $\Pi^1_1$-hard) and analytic (resp. coanalytic). 

As there exist analytic non Borel (resp. coanalytic non Borel) sets we have that  $\Sigma^1_1$-hard (resp. $\Pi^1_1$-hard) sets are non Borel. Also, if $X$ is a standard Borel space, $A\subset X$, and there exists a Borel reduction from a $\Sigma^1_1$-hard (resp. $\Pi^1_1$-hard) subset $B$ of a standard Borel space $Y$ to $A$, then $A$ is  $\Sigma^1_1$-hard (resp. $\Pi^1_1$-hard). We refer to \cite{S}  (pag. 56) and \cite{Ke} (\emph{section $26$}) for more on complete analytic and coanalytic sets. Complete analytic sets (resp. complete coanalytic sets) are also called $\Sigma^1_1$-complete sets (resp. $\Pi^1_1$-complete).

Consider a Polish space $X$ and let $\cF(X)$ be the set of all its non empty closed sets. We endow $\cF(X)$  with the \emph{Effros-Borel structure}, i.e., the $\sigma$-algebra 
generated by

$$\{F\subset X | F\cap U\neq \emptyset\},$$

\noindent where $U$ varies among the open sets of $X$. It can be shown that $\cF(X)$ with the Effros-Borel structure is a standard Borel space (\cite{Ke}, \emph{theorem $12.6$}).  The following well-known lemma (see 
\cite{Ke}, \emph{theorem $12.13$}) will be crucial in some of our proofs.

\begin{lemma}\textbf{(Kuratowski-Ryll-Nardzewski selection principle)}\label{lll}
Let $X$ be a Polish space. There exists a sequence of Borel functions $(S_n)_{n\in\N}:\cF(X)\to X$ such that $\{S_n(F)\}_{n\in\N}$ is dense in $F$, for all closed $F\subset X$.
\end{lemma}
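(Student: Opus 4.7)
The plan is to build the selectors $S_n$ from a countable basis of $X$ by a tree-style recursive refinement, exploiting completeness of $X$ and the fact that for every open $U\subset X$ the set $A_U=\{F\in\cF(X):F\cap U\neq\emptyset\}$ is Borel by the very definition of the Effros-Borel structure.

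First I would fix a compatible complete metric $d$ on $X$ and a countable dense subset $(y_k)_{k\in\N}\subset X$, and consider the countable basis of open balls $(V_m)_{m\in\N}$ with $V_m=B(y_{k(m)},r(m))$ where $r(m)\to 0$ is chosen so all diameters are controlled. The main lemma to prove is: for each open $U\subset X$, there is a Borel map $s_U:\cF(X)\to X$ such that $s_U(F)\in F\cap U$ whenever $F\cap U\neq\emptyset$ (on the complement $A_U^c$ we just set $s_U$ to a fixed value). To construct $s_U$, I define Borel functions $F\mapsto m_k(F)\in\N$ recursively: let $m_0(F)$ be the least $m$ with $V_m\subset U$, $\text{diam}(V_m)\leq 1$, and $F\cap V_m\neq\emptyset$; and given $m_0(F),\ldots,m_k(F)$, let $m_{k+1}(F)$ be the least $m$ with $\overline{V_m}\subset V_{m_k(F)}$, $\text{diam}(V_m)\leq 2^{-(k+1)}$, and $F\cap V_m\neq\emptyset$. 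Each such condition is Borel in $F$ (it is either independent of $F$ or of the form $F\in A_{V_m}$), and taking the least $m$ amounts to a countable Boolean combination, so each $m_k$ is Borel. I would then put $s_U(F):=\lim_k y_{k(m_k(F))}$, the center of the shrinking chosen balls; completeness of $d$ and the shrinking diameters guarantee the limit exists, closedness of $F$ together with $F\cap V_{m_k(F)}\neq\emptyset$ forces $s_U(F)\in F$, and $V_{m_0(F)}\subset U$ forces $s_U(F)\in U$.

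Having constructed $s_U$, I would enumerate a countable basis $(U_n)_{n\in\N}$ of $X$ and define $S_n(F):=s_{U_n}(F)$. Density of $\{S_n(F)\}_{n\in\N}$ in $F$ is then straightforward: if $W\subset X$ is open with $F\cap W\neq\emptyset$, pick $x\in F\cap W$ and some $U_n$ with $x\in U_n\subset W$; then $F\cap U_n\neq\emptyset$, so $S_n(F)\in F\cap U_n\subset W$, witnessing that $\{S_n(F)\}_{n\in\N}$ meets every open set that $F$ meets, which exactly means it is dense in $F$.

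The main obstacle, and the step needing the most care, is checking Borel measurability of the recursively defined $m_k$. The non-trivial part is that ``least $m$ such that $\ldots$'' has to be unpacked as $\{F:m_k(F)=m\}=\bigcap_{m'<m}\{F:\text{condition fails for }m'\}\cap\{F:\text{condition holds for }m\}$, and the conditions involved (such as $F\cap V_m\neq\emptyset$, once $V_m\subset V_{m_k(F)}$ is phrased via a case split on the value of $m_k(F)$) are each generating sets of the Effros-Borel $\sigma$-algebra; a clean induction on $k$ then gives that $\{F:m_k(F)=m\}$ is Borel for every $k$ and $m$, hence each $S_n$ is Borel as a pointwise limit of Borel functions.
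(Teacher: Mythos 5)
The paper does not actually prove this lemma; it is quoted from Kechris (\emph{Classical Descriptive Set Theory}, Theorem 12.13), so there is no in-paper argument to compare against. Your proof is, in substance, the standard textbook one: for each basic open $U$ build a selector on $\{F:F\cap U\neq\emptyset\}$ by recursively choosing balls of shrinking diameter whose closures nest and which all meet $F$, observe that each choice is a countable Boolean combination of generators of the Effros--Borel $\sigma$-algebra (after a case split on the previously chosen index), and pass to the limit of the centers using completeness and closedness of $F$; then enumerate a countable basis. All of that is correct, including the measurability bookkeeping for the ``least $m$ such that'' clauses and the verification that the recursion never gets stuck (density of the centers and openness of $V_{m_k(F)}$ guarantee an admissible next ball exists).

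The one point you should repair is the default value of $s_{U_n}$ off the set $A_{U_n}$. You set $s_{U_n}(F)$ to a fixed element of $X$ when $F\cap U_n=\emptyset$, but then $S_n(F)$ need not belong to $F$, and the lemma (as it is used throughout the paper, e.g.\ in forming $S_n(B_X)\in B_X$ and rational linear combinations inside $X$) requires $S_n(F)\in F$ for every $n$. The fix is immediate: your construction with $U=X$ already yields a global Borel selector $s_X$ with $s_X(F)\in F$ for every nonempty closed $F$, so define $S_n(F)=s_{U_n}(F)$ when $F\cap U_n\neq\emptyset$ and $S_n(F)=s_X(F)$ otherwise; this keeps every value in $F$, preserves Borel measurability (the two cases are separated by the Borel set $A_{U_n}$), and does not affect your density argument.
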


In these notes we will only work with separable Banach spaces.  We denote the closed unit ball of a Banach space $X$ and its unit sphere by $B_X$ and $S_X$, respectively. It is well 
known that every separable Banach space is isometrically isomorphic to a closed linear subspace of $C(\Delta)$ (see \cite{Ke}, pag. $79$), where $\Delta$ denotes the Cantor set. Therefore, 
$C(\Delta)$ is called \emph{universal} for the class of separable Banach spaces and we can code the class of separable Banach spaces by 
$\text{SB}=\{X\subset C(\Delta)| X \text{ is a closed }\allowbreak\text{linear subspace of }\allowbreak C(\Delta)\}$. As $C(\Delta)$ is clearly a Polish space we can endow 
$\cF(C(\Delta))$ with the Effros-Borel structure. It can be shown that $\SB$ is a Borel set in $\cF(C(\Delta))$ and hence it is also a standard Borel space (see \cite{D}, \emph{theorem $2.2$}). It now makes
sense to wonder if specific sets of separable Banach spaces are Borel or not.

Throughout these notes we will denote by $\{S_n\}_{n\in\N}$ the sequence of Borel functions $S_n:\SB\to C(\Delta)$ given by \emph{lemma \ref{lll}} (more precisely, the restriction of 
those functions to $\SB$). Hence, for all $X\in\SB$,  $\{S_n(X)\}_{n\in\N}$ is dense in $X$. By taking rational linear combinations of the functions $\{S_n\}$, we can (and we do) assume that, for all $X\in\SB$, all $n,k\in\N$, and all $p,q\in\Q$, there exists $m\in\N$ such that $qS_n(X)+pS_k(X)=S_m(X)$.

Denote by $\NN$ the set of all finite tuples of natural numbers plus the empty set. Given $s=(s_0,...,s_{n-1}),\allowbreak t=(t_0,...,t_{m-1})\in\NN$ we say that the length of $s$ is 
$|s|=n$, $s_{|i}=(s_0,...,s_{i-1})$, for all $i\in\{1,...,n\}$, $s_0=\emptyset$, $s\leq t$ \emph{iff} $n\leq m$ and $s_i=t_i$, for all $i\in\{0,...,n-1\}$, i.e., if $t$ is an 
extension of $s$. We define $s<t$ analogously. Define the concatenation of $s$ and $t$ as $s^\smallfrown t=(s_0,...,s_{n-1},t_0,...,t_{m-1})$.  A subset $T$ of $\NN$ is called a 
\emph{tree} if $t\in T$ implies $t_{|i}\in T$, for all $i\in\{0,...,|t|\}$. We denote the set of trees on $\N$ by $\text{Tr}$. A subset $I$ of a tree $T$ is called a segment if $I$ is 
completely ordered and if $s,t\in I$ with $s\leq t$, then $l\in I$, for all $l\in T$ such that $s\leq l\leq t$. Two segments $I_1,\ I_2$ are called completely incomparable if neither 
$s\leq t$ nor $t\leq s$ hold if $s\in I_1$ and $t\in I_2$.

As $\NN$ is countable, $2^\NN$ (the power set of $\NN$) is Polish with its standard product topology. If we think about $\text{Tr}$ as a subset of $2^\NN$  it is easy to see that $\text{Tr}$ is a closed set in $2^\NN$, so it is a standard Borel space. A $\beta\in\N^\N$ is called a \emph{branch} of 
 a tree $T$ if $\beta_{|i}\in T$, for all $i\in\N$, where $\beta_{|i}$ is defined analogously as above. We call a tree $T$ \emph{well-founded} if $T$ has no branches and 
 \emph{ill-founded} otherwise, we denote the set of well-founded and ill-founded trees by $\WF$ and $\IF$, respectively. It is well known that $\WF$ is a complete coanalytic set of 
 $\text{Tr}$, hence $\IF$ is complete analytic (see \cite{Ke}, \emph{theorem $27.1$}).

There is a really important index that can be defined on the set of trees called the \emph{order index} of a tree. For a given tree $T\in\text{Tr}$ we define the \emph{derived tree} of 
$T$ as

$$T'=\{s\in T|  \exists t\in T,\ s<t\}.$$

By transfinite induction we define $(T_\xi)_{\xi\in\textbf{ON}}$, where \textbf{ON} denotes the ordinal numbers, as follows

\begin{align*}
T^0&=T\\
T^{\alpha}&=(T^{\beta})', \text{ if }\alpha=\beta+1,\text{ for some }\beta\in\textbf{ON},\\
T^{\alpha}&=\bigcap_{\beta<\alpha}T^{\beta},\text{ if }\alpha \text{ is a limit ordinal.}
\end{align*}

We now define the order index on $\text{Tr}$. If there exists an ordinal number $\alpha<\omega_1$, where $\omega_1$ denotes the smallest uncountable ordinal, such that 
$T^\alpha=\emptyset$ we say the order index of $T$ is $o(T)=\min\{\alpha<\omega_1| \ T^{\alpha}=\emptyset\}$. If there is no such countable ordinal we set $o(T)=\omega_1$. The reason 
why we introduce this index  is because of the way it interacts with the notion of well-founded and ill-founded trees. We have the following easy proposition (see \cite{S}, \emph{chapter $3$, section $2$}).

\begin{prop}
A tree $T\in\text{Tr}$ on the natural numbers is well-founded if, and only if, its order index is countable, i.e., if, and only if, $o(T)<\omega_1$.
\end{prop}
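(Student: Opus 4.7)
The plan is to prove both implications separately. For the easier direction, I would assume $o(T) < \omega_1$ and prove the contrapositive: if $T$ admits a branch $\beta \in \N^\N$, I would show by transfinite induction on $\xi$ that $\beta_{|i} \in T^\xi$ for every $i \in \N$ and every ordinal $\xi$. The base case $\xi = 0$ is just $T^0 = T$; at successors, the relation $\beta_{|i} < \beta_{|(i+1)} \in T^\xi$ places $\beta_{|i}$ in $(T^\xi)' = T^{\xi+1}$; at limits the conclusion is immediate from $T^\alpha = \bigcap_{\eta < \alpha} T^\eta$. Taking $\xi = o(T)$ would then give $\beta_{|i} \in T^{o(T)} = \emptyset$, a contradiction.

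For the converse, suppose $T$ is well-founded. Since $(T^\xi)_{\xi \in \textbf{ON}}$ is nonincreasing, there is a least ordinal $\gamma$ with $T^\gamma = T^{\gamma+1}$. The first key step is to show $\gamma < \omega_1$. If instead $\gamma \geq \omega_1$, then for every $\xi < \omega_1$ one has $T^{\xi+1} \subsetneq T^\xi$, so one can pick $s_\xi \in T^\xi \setminus T^{\xi+1}$ to produce pairwise distinct elements: whenever $\eta < \xi$ we have $s_\xi \in T^\xi \subseteq T^{\eta+1}$ while $s_\eta \notin T^{\eta+1}$. This would inject $\omega_1$ into the countable set $T \subseteq \NN$, a contradiction, so $\gamma < \omega_1$.

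It remains to rule out the possibility that $T^\gamma \neq \emptyset$. At this stabilization point $T^\gamma = (T^\gamma)'$, so every element of $T^\gamma$ has a proper extension inside $T^\gamma$. Starting from any $s_0 \in T^\gamma$ I would iteratively select $s_0 < s_1 < s_2 < \cdots$ in $T^\gamma$, with lengths necessarily tending to infinity, and define $\beta \in \N^\N$ coherently by $\beta(j) := s_n(j)$ for any $n$ with $|s_n| > j$. Since $T$ is closed under initial segments, every $\beta_{|i}$ lies in $T$, producing a branch and contradicting well-foundedness. Hence $T^\gamma = \emptyset$ and $o(T) \leq \gamma < \omega_1$. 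The main obstacle is the stabilization step in the reverse direction, which is where the countability of $\NN$ enters essentially; the rest is routine transfinite bookkeeping together with the tree closure property.
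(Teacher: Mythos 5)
Your proof is correct and is the standard argument: the paper does not prove this proposition itself but simply cites \cite{S}, and both implications you give (the transfinite induction pushing a branch into every derivative, and the stabilization-by-countability argument followed by extracting a branch from a nonempty fixed point $T^\gamma=(T^\gamma)'$) are exactly the expected route. No gaps.
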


For $T\in\text{Tr}$ and $k\in\N$, let $T(k)=\{s\in\NN|(k)^\smallfrown s\in T\}$ and $T_k=\{s\in T| (k)\leq s\}$. We have another useful application of the order index to well-founded 
trees (see \cite{S}, \emph{chapter $3$, section $2$}).

\begin{prop}\label{p2}
Let $T\in\WF$ with $o(T)>1$, then $o(T(k))<o(T)$, for all $k\in\N$.
\end{prop}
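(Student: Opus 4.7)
The plan is to reduce the statement to the natural identity $(T(k))^\alpha = T^\alpha(k)$, valid for every ordinal $\alpha$, and then to analyze what happens at $\alpha = o(T)$.

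First I would prove this identity by transfinite induction on $\alpha$. The base case $\alpha = 0$ is just the definition of $T(k)$. For the successor step, unwinding the definition of the derived tree, $s$ lies in $(T(k))^{\alpha+1}$ iff $s \in (T(k))^\alpha$ admits a proper extension still inside $(T(k))^\alpha$; the inductive hypothesis lets me rewrite this, via the bijection $s \leftrightarrow (k)^\smallfrown s$, as $(k)^\smallfrown s \in T^\alpha$ admitting a proper extension in $T^\alpha$, i.e., $(k)^\smallfrown s \in T^{\alpha+1}$, i.e., $s \in T^{\alpha+1}(k)$. The limit step is immediate since $U \mapsto U(k)$ commutes with arbitrary intersections. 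Along the way I would also record the auxiliary fact (another routine transfinite induction) that each $T^\alpha$ is itself a downward-closed subtree of $T$; in particular, whenever $T^\alpha \neq \emptyset$ one has $\emptyset \in T^\alpha$, and moreover $T^\alpha(k) = \emptyset$ as soon as $(k) \notin T^\alpha$.

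Next I would set $\alpha := o(T)$ and split into cases. If $\alpha = \beta + 1$ is a successor with $\beta \geq 1$ (which follows from $o(T) > 1$), then $T^\beta$ is nonempty but $(T^\beta)' = T^\alpha = \emptyset$, so no element of $T^\beta$ admits a proper extension in $T^\beta$; since $\emptyset \in T^\beta$ this forces $T^\beta = \{\emptyset\}$ and in particular $(k) \notin T^\beta$. If $\alpha$ is a limit ordinal, then $\bigcap_{\beta < \alpha} T^\beta = T^\alpha = \emptyset$, and since the $T^\beta$ are decreasing there must exist some $\beta < \alpha$ with $(k) \notin T^\beta$. In either case the auxiliary fact gives $T^\beta(k) = \emptyset$, and the identity from the first step yields $(T(k))^\beta = \emptyset$, whence $o(T(k)) \leq \beta < \alpha = o(T)$.

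The only real technical nuisance is the successor step of the transfinite induction, where the ``has a proper extension inside the current layer'' predicate must be transported cleanly under the bijection $s \leftrightarrow (k)^\smallfrown s$; after the identity is in hand, the case analysis above is pure bookkeeping. The hypothesis $o(T) > 1$ is used only to guarantee $\beta \geq 1$ in the successor case, so that the ``$\emptyset$ is maximal in $T^\beta$'' argument has nontrivial content.
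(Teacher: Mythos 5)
Your proof is correct. The paper gives no proof of this proposition at all (it only cites \cite{S}), and your argument---establishing the commutation identity $(T(k))^{\alpha}=T^{\alpha}(k)$ by transfinite induction and then splitting into successor/limit cases at $\alpha=o(T)$---is the standard one, so there is nothing to reconcile; the only quibble is that the hypothesis $o(T)>1$ is really there to exclude $T=\emptyset$ (where the conclusion fails), since your successor-case argument works verbatim when $\beta=0$.
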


Now that we've seen all the descriptive set theoretical background we need in order to understand our results and their proofs let's start with the real math.

\section{$\ell_p$-Baire sums.}

We now treat $\ell_p$-Baire sums of basic sequences, this tool will be crucial in many of our results in these notes. For each $p\in[1,\infty)$, and each basic sequence $\mathcal{E}=(e_n)_{n\in\N}$, we define a Borel function $\varphi_{\mathcal{E},p}:\text{Tr}\to\SB$ in the following manner. For each $\theta\in\text{Tr}$, and $x=(x(s))_{s\in\theta}\in c_{00}(\theta)$ we define

$$\left\|x\right\|_{\mathcal{E},p,\theta}=\sup\Big\{\Big(\sum_{i=1}^{n}\big\|\sum_{s\in I_i} x(s)e_{|s|}\big\|^p_{\mathcal{E}}\Big)^{\frac{1}{p}}| \ n\in\N,\ I_1, ...,\ I_n \text{ incomparable segments of 
}\theta\Big\},$$

\noindent where $\|.\|_{\mathcal{E}}$ is the norm of $\overline{span}\{\cE\}$. We define $\varphi_{\mathcal{E},p}(\theta)$ as the completion of $c_{00}(\theta)$ under the norm $\left\|.\right\|_{\mathcal{E},p,\theta}$. The space $\varphi_{\cE,p}(\theta)$ is known as the \emph{$\ell_p$-Baire sum} of $\overline{span}\{\cE\}$ (index by $\theta$). Pick $Y\subset C(\Delta)$ such that $\varphi_{\mathcal{E},p}(\NN)$ is isometric to $
Y$. If we consider the natural isometries of $\varphi_{\mathcal{E},p}(\theta)$ into $\varphi_{\cE,p}(\NN)$, we can see $\varphi_{\cE,p}$ as a Borel function from $\Tr$ into $\SB$. With this in mind, we have  (see 
\cite{S}, \emph{proposition $3.1$}, pag. $79$):

\begin{prop}
Let $\varphi_{\mathcal{E},p}:\Tr\to \SB$ be the function defined above. Then $\varphi_{\cE,p}$ is a Borel function. The same is true if we define $\|.\|_{\mathcal{E},0,\theta}$ as

$$\left\|x\right\|_{\mathcal{E},0,\theta}=\sup\Big\{\big\|\sum_{s\in I} x(s)e_{|s|}\big\|_{\cE}|\ I \text{ segment of }\theta\Big\},$$

\noindent and let $\varphi_{\mathcal{E},0}(\theta)$ to be the completion of $(c_{00}(\theta),\|.\|_{\mathcal{E},0,\theta})$.
\end{prop}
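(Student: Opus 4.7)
The plan is to exhibit a countable family of Borel-in-$\theta$ elements whose closure realizes $\varphi_{\mathcal{E},p}(\theta)$, and then invoke the standard characterization: a map $\psi:\Tr\to \cF(C(\Delta))$ is Borel if and only if there is a countable family of Borel functions $g_n:\Tr\to C(\Delta)$ with $\{g_n(\theta)\}_n$ dense in $\psi(\theta)$ for every $\theta$. (One direction is \emph{lemma \ref{lll}}; the other follows since $\{\theta:\psi(\theta)\cap U\neq\emptyset\}=\bigcup_n g_n^{-1}(U)$ for any open $U\subset C(\Delta)$.)

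The first step is to justify viewing $\varphi_{\mathcal{E},p}(\theta)$ as a subspace of $\varphi_{\mathcal{E},p}(\NN)$ via the obvious extension-by-zero map on $c_{00}(\theta)\hookrightarrow c_{00}(\NN)$. This requires checking that this assignment is an isometry for $\|\cdot\|_{\mathcal{E},p,\theta}$ and $\|\cdot\|_{\mathcal{E},p,\NN}$. The key observation is that, since $\theta$ is a downward-closed subset of $\NN$, a segment $I\subset\theta$ is automatically order-convex in $\NN$ and hence is a segment of $\NN$; conversely, the intersection of any segment of $\NN$ with $\theta$ is again a segment of $\theta$ (downward closure of $\theta$ is what makes this work). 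Since for $x\in c_{00}(\theta)$ only indices $s\in\theta$ contribute to $\sum_{s\in I}x(s)e_{|s|}$, replacing ``incomparable segments of $\theta$'' by ``incomparable segments of $\NN$'' in the defining supremum changes nothing. The same observation handles the $p=0$ case with a single segment.

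Next, fix once and for all an isometric copy $Y\subset C(\Delta)$ of $\varphi_{\mathcal{E},p}(\NN)$, and for each $s\in\NN$ let $y_s\in Y$ denote the image of the basis vector indexed by $s$. Enumerate $\NN=\{s_k\}_{k\in\N}$. Because $\Tr\subset 2^{\NN}$ carries the product topology, each set $\{\theta\in\Tr:s_k\in\theta\}$ is clopen, so the function
\begin{equation*}
f_k:\Tr\to C(\Delta),\qquad f_k(\theta)=\begin{cases} y_{s_k}, & s_k\in\theta,\\ 0, & s_k\notin\theta,\end{cases}
\end{equation*}
is Borel (in fact simple). Enumerating all finite rational linear combinations $\sum_{j=1}^{N}q_j f_{k_j}$ gives a countable family $\{g_n\}_n$ of Borel maps $\Tr\to C(\Delta)$ such that, for each fixed $\theta$, $\{g_n(\theta)\}_n$ is precisely the set of finite rational linear combinations of $\{y_{s_k}:s_k\in\theta\}$, which is a dense $\Q$-linear subspace of $\varphi_{\mathcal{E},p}(\theta)$ by the very definition of the latter as the completion of $c_{00}(\theta)$ under $\|\cdot\|_{\mathcal{E},p,\theta}$.

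Applying the characterization recalled above then yields that $\varphi_{\mathcal{E},p}$ is Borel from $\Tr$ into $\cF(C(\Delta))$, and since its values lie in $\SB$, it is Borel as a map into $\SB$. The argument is identical in the $p=0$ setting once the isometric-embedding step is carried out for the single-segment norm. The only genuine obstacle is the isometric embedding verification, i.e., the segment-convexity lemma in the first step; everything afterwards is the routine ``Borel dense selector'' machinery.
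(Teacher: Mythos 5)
Your proof is correct. Note that the paper does not actually prove this proposition—it simply cites \cite{S} (\emph{proposition 3.1}, pag.~79)—so there is no in-paper argument to compare against; your write-up supplies the standard proof that any source would give: check that extension-by-zero is an isometry of $\varphi_{\cE,p}(\theta)$ into $\varphi_{\cE,p}(\NN)$ (your segment-convexity observation, which correctly uses that $\theta$ is downward closed), and then produce a countable family of Borel selectors (rational combinations of the clopen-indicator maps $f_k$) that is dense in each fiber, which yields Borelness of the map into $\cF(C(\Delta))$ via $\{\theta:\psi(\theta)\cap U\neq\emptyset\}=\bigcup_n g_n^{-1}(U)$. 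Both halves are sound, including the $p=0$ case.
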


 Let $\theta\in\Tr$, $p\in[1,\infty)$, and $\mathcal{E}=(e_n)_{n\in\N}$ be a basic sequence. We denote by  $\mathcal{E}^*$  the same sequence as $\mathcal{E}$ but with the first term deleted. We clearly have that $\varphi_{\mathcal{E},p}(\theta)$ is isomorphic to 

$$\R\oplus\left(\bigoplus_{\lambda\in\Lambda}\varphi_{\mathcal{E}^*,p}(\theta(\lambda))\right)_{\ell_p},$$

\noindent where $\Lambda=\{\lambda\in\N| (\lambda)\in \theta\}$, and the term $\R$ appears because of the empty coordinate of $\theta$. The following lemma is of great importance to understand the geometry of $\varphi_{\mathcal{E},p}(\theta)$ .

\begin{lemma}\label{arroto}
The Borel function $\varphi_{\mathcal{E},p}:\Tr\to\SB$ defined above has the following properties:

\begin{enumerate}[(i)]
\item If $\theta\in\IF$, then $\varphi_{\mathcal{E},p}(\theta)$ contains $\overline{span}\{\mathcal{E}\}$.
\item If $\theta\in\WF$, then $\varphi_{\mathcal{E},p}(\theta)$ is $\ell_p$-saturated, i.e., every infinite dimensional subspace of $\varphi_{\mathcal{E},p}(\theta)$ contains a copy of $\ell_p$.
\end{enumerate}

The analogous is true for $\varphi_{\mathcal{E},0}:\Tr\to\SB$, i.e., 

\begin{enumerate}[(i)]
\item If $\theta\in\IF$, then $\varphi_{\mathcal{E},0}(\theta)$ contains $\overline{span}\{\mathcal{E}\}$.
\item If $\theta\in\WF$, then $\varphi_{\mathcal{E},0}(\theta)$ is $c_0$-saturated, i.e., every infinite dimensional subspace of $\varphi_{\mathcal{E},0}(\theta)$ contains a copy of $c_0$.
\end{enumerate}
\end{lemma}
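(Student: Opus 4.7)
My plan for \textbf{(i)} is to embed $\overline{\text{span}}\{\mathcal{E}\}$ into $\varphi_{\mathcal{E},p}(\theta)$ along a branch of $\theta$. Since $\theta\in\IF$, pick $\beta\in\N^\N$ with $s_n:=\beta_{|n}\in\theta$ for every $n\ge 0$, noting that $|s_n|=n$. Define a linear map by $T(e_n):=e_{s_n}$. The chain $\{s_n\}_{n}$ is totally $\le$-ordered, so any family of pairwise completely incomparable segments of $\theta$ meets $\{s_n\}$ in at most one segment, and that segment is automatically a contiguous block $\{s_a,\dots,s_b\}$. Hence for $x\in c_{00}$,
$$
\|T(x)\|_{\mathcal{E},p,\theta}\;=\;\sup_{a\le b}\left\|\sum_{n=a}^{b} x_n e_n\right\|_{\mathcal{E}},
$$
which is equivalent to $\|\sum_n x_n e_n\|_{\mathcal{E}}$ with constants depending only on the basis constant of $\mathcal{E}$. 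Hence $T$ extends to an isomorphic embedding. The same computation gives the analogous embedding into $\varphi_{\mathcal{E},0}(\theta)$, since its supremum over incomparable segments again collapses to a sup over single segments.

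For \textbf{(ii)} my plan is transfinite induction on $o(\theta)$, with inductive statement ``for every basic sequence $\mathcal{F}$ and every $\theta'\in\WF$ with $o(\theta')<\alpha$, the space $\varphi_{\mathcal{F},p}(\theta')$ is $\ell_p$-saturated.'' The base case $o(\theta)\le 1$ is vacuous because then $\varphi_{\mathcal{E},p}(\theta)$ has dimension at most one. For the inductive step with $o(\theta)>1$, invoke the isomorphism
$$
\varphi_{\mathcal{E},p}(\theta)\;\cong\;\R\oplus\left(\bigoplus_{\lambda\in\Lambda}\varphi_{\mathcal{E}^*,p}(\theta(\lambda))\right)_{\ell_p}
$$
stated just before the lemma, and combine it with Proposition \ref{p2} to obtain $o(\theta(\lambda))<o(\theta)$ for every $\lambda\in\Lambda$. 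The inductive hypothesis, applied with $\mathcal{F}=\mathcal{E}^*$, then makes each summand $\ell_p$-saturated, and the conclusion reduces to the auxiliary fact that an $\ell_p$-sum of $\ell_p$-saturated Banach spaces is $\ell_p$-saturated.

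Establishing this auxiliary fact is where the main work lies. Given an infinite-dimensional $W\subseteq Y:=(\bigoplus_n Y_n)_{\ell_p}$, I would choose a normalized basic sequence $(w_k)\subseteq W$ and examine the natural projections $P_{\le N}$ onto the first $N$ summands. If for every $N$ there is some $w_k$ with $\|P_{\le N}w_k\|$ arbitrarily small, a diagonal/gliding-hump argument extracts a subsequence together with small perturbations that are disjointly supported in $Y$, hence $1$-equivalent to the canonical $\ell_p$-basis; the standard perturbation lemma then yields $\ell_p\hookrightarrow W$. Otherwise $P_{\le N_0}$ is bounded below on some infinite-dimensional subspace of $W$, which therefore embeds into the finite $\ell_p$-sum $Y_1\oplus\cdots\oplus Y_{N_0}$. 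Iterating the same dichotomy inside these finitely many factors (each step either producing a block-type $\ell_p$-basic sequence by perturbing to vectors disjointly supported off one factor, or peeling a factor off by the bounded-below alternative) reduces in finitely many steps to an infinite-dimensional subspace of a single $\ell_p$-saturated $Y_{n_0}$, inside which $\ell_p$ embeds by hypothesis. The delicate point is this finite-sum reduction, since the coordinate projections need not be bounded below on the whole of $W$; handling it cleanly via the perturbation lemma is the main obstacle I expect. The $\varphi_{\mathcal{E},0}$ statement follows by the identical blueprint, using the $c_0$-sum analogue of the decomposition and the fact that disjointly supported normalized vectors in a $c_0$-sum are $1$-equivalent to the $c_0$-basis.
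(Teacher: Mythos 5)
Your proposal is correct and follows essentially the same route as the paper: for (i) an embedding along a branch, and for (ii) transfinite induction on $o(\theta)$ using the $\ell_p$-sum decomposition together with Proposition \ref{p2}, resolved by the same bounded-below/strictly-singular dichotomy, gliding hump over completely incomparable supports, and small-perturbation lemma. The only (cosmetic) difference is organizational: you package the infinite-sum case as a standalone lemma (``an $\ell_p$-sum of $\ell_p$-saturated spaces is $\ell_p$-saturated''), whereas the paper isolates only the finite-sum case (Lemma \ref{u8u}) and runs the gliding hump directly inside $\varphi_{\mathcal{E},p}(\theta)$.
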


Before we prove this lemma, let's show a simple lemma that will be important in our proof.

\begin{lemma}\label{u8u}
A finite sum of $\ell_p$-saturated spaces (resp. $c_0$-saturated) is $\ell_p$-saturated ($c_0$-saturated).
\end{lemma}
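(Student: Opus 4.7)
The plan is to reduce by induction to the case of a direct sum of two spaces $X = X_1 \oplus X_2$, both $\ell_p$-saturated, since a finite direct sum can be built up two factors at a time and the property being $\ell_p$-saturated is clearly preserved under isomorphism. Let $Y \subset X$ be an arbitrary infinite-dimensional closed subspace, and let $P_1, P_2$ denote the coordinate projections. The strategy is to analyze the restriction $P_1|_Y$ via the standard strictly-singular dichotomy.

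\textbf{Case 1.} If $P_1|_Y$ is not strictly singular, there is an infinite-dimensional closed subspace $Z \subset Y$ on which $P_1$ is an isomorphism onto its image. Then $Z$ is isomorphic to the infinite-dimensional subspace $P_1(Z) \subset X_1$, so by $\ell_p$-saturation of $X_1$ we find a copy of $\ell_p$ inside $P_1(Z)$, which pulls back to a copy of $\ell_p$ inside $Z \subset Y$.

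\textbf{Case 2.} If $P_1|_Y$ is strictly singular, I would extract a normalized basic sequence $(y_n)$ in $Y$ with $\|P_1 y_n\| \to 0$ as fast as I like. Taking the decay sufficiently small relative to the basis constant and applying the Bessaga--Pe{\l}czy\'nski small perturbation lemma, the sequence $(y_n)$ becomes equivalent to the perturbed sequence $(P_2 y_n) = (y_n - P_1 y_n)$. In particular $(P_2 y_n)$ is a basic sequence in $X_2$ spanning an infinite-dimensional subspace. By $\ell_p$-saturation of $X_2$, its closed linear span contains a copy of $\ell_p$; transferring this copy back through the equivalence produces a copy of $\ell_p$ in $\overline{\operatorname{span}}\{y_n\} \subset Y$.

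The $c_0$-saturated version is identical word for word, simply replacing $\ell_p$ by $c_0$: neither the extraction of the basic sequence nor the perturbation argument depends on which target space we are hunting for. The main technical point I expect to have to nail down is the small perturbation step, where one must choose the decay $\|P_1 y_n\| \to 0$ small enough (with respect to the basis constant of a further subsequence) to guarantee both that $(y_n)$ and $(P_2 y_n)$ are genuinely equivalent and that the $\ell_p$ (or $c_0$) copy inside $\overline{\operatorname{span}}\{P_2 y_n\}$ transfers faithfully back to $Y$.
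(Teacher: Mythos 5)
Your argument is correct, but it takes a different route from the paper's. The paper does not split into two cases at all: it observes that the coordinate projections $P_1,\dots,P_n$ cannot \emph{all} be strictly singular on an infinite-dimensional subspace $E$, because for finitely many strictly singular operators one can find a common infinite-dimensional subspace $A\subset E$ on which every $\|P_{j|A}\|<\varepsilon$, and then a norm-one $x\in A$ would satisfy $\|x\|\leq n\varepsilon<1$ (computing the norm as the $\ell_1$-sum of the coordinates), a contradiction. Hence some $P_{j_0}$ is an isomorphism on a further subspace of $E$, and saturation of the single summand $X_{j_0}$ finishes the proof --- your Case 1, with Case 2 never arising. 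Your version instead reduces to two summands by induction and handles the strictly singular alternative with a Bessaga--Pe{\l}czy\'nski perturbation, extracting a normalized basic sequence $(y_n)$ with $\|P_1y_n\|\to 0$ fast and transferring a copy of $\ell_p$ (or $c_0$) back from $\overline{\mathrm{span}}\{P_2y_n\}\subset X_2$. That works, and the transfer is legitimate since the equivalence of $(y_n)$ and $(P_2y_n)$ gives an isomorphism of their closed spans; but it invokes basic-sequence selection and the small perturbation principle where the paper needs only the elementary observation that a unit vector cannot have all $n$ of its coordinates small. (Your Case 2 machinery is essentially what the paper does deploy later, in Case 2 of the proof of Lemma \ref{arroto}, where the number of summands is infinite and the paper's finite pigeonhole trick is unavailable.)
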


\begin{proof}
Say $(X_1,\|.\|_1),...,(X_n,\|.\|_n)$ are $\ell_p$-saturated. Let $(X,\|.\|_X)$ be the sum of those spaces.  As this is a finite sum, we can assume $X=(\oplus_{j=1}^nX_j)_{\ell_1}$, i.e., if $(x_1,...,x_n)\in X$, then $\|x\|_X=\sum_j\|x_j\|_{j}$. Denote by $P_j:X\to X_j$ the standard projection on the $j$-th coordinate. Let $E\subset X$ be an infinite dimensional subspace.

Claim: $P_{j_0}:E\to X_{j_0}$ is not strictly singular, for some $j_0\in\{1,...,n\}$.

Once the claim is proved, the result trivially follows. Assume $P_j$ is strictly singular, for all $j\in\{1,...,n\}$. By a classic property of strictly singular operators (see \cite{D}, \emph{proposition B.5}), we know that for all $\eps>0$ there exists an infinite dimensional subspace $A\subset E$ such that $\|P_{j|A}\|<\eps$, for all $j\in\{1,...,n\}$. Pick $x\in A$ of norm one. Then, as  $x=(P_1(x),...,P_n(x))$, we have $\|x\|_X\leq n\eps$. By choosing $\eps<1/n$ we get a contradiction. 
\end{proof}

\begin{proof}\textbf{(of \emph{lemma} \ref{arroto})}
If $\theta\in\IF$, clearly $\varphi_{\mathcal{E},p}(\theta)$ contains $\overline{span}\{\mathcal{E}\}$.  Indeed, let $\beta$ be a branch of $\theta$, then $\overline{span}\{\mathcal{E}\}\cong\varphi_{\mathcal{E},p}(\beta)\emb\varphi_{\mathcal{E},p}(\theta)$, where by $\varphi_{\mathcal{E},p}(\beta)$ we mean $\varphi_{\mathcal{E},p}$ applied to the tree $\{s\in\NN|s<\beta\}$.

Say $\theta\in\WF$. Let's proceed by transfinite induction on the order of $\theta$. If $o(\theta)=1$ the result is clear. Indeed, if $o(\theta)=1$, $\varphi_{\mathcal{E},p}(\theta)$ is finite dimensional, so it has no infinite dimensional subspaces. Assume $\varphi_{\mathcal{E},p}(\theta)$ is $\ell_p$-saturated, for all basic sequence $\cE$, and all $\theta\in\WF$ with $o(\theta)<\alpha$,  for some $\alpha<\omega_1$. Fix $\theta\in\text{WF}$ with 
$o(\theta)=\alpha$.

Let $\Lambda=\{\lambda\in\N| (\lambda)\in \theta\}$, and enumerate $\Lambda$, say $\Lambda=\{\lambda_i|i\in\N\}$. For each $\lambda\in\Lambda$, let $\theta_\lambda=\{s\in\theta| 
(\lambda)\leq s\}$. As $\theta\in\text{WF}$, \emph{Proposition \ref{p2}} gives us

$$o\Big(\theta(\lambda_j)\Big)<o(\theta)=\alpha, \ \forall j\in\N.$$

 Consider now the projections

\begin{align*}
P_{\lambda_n}:\ \ \ \varphi_{\mathcal{E},p}(\theta)\ \ &\to\varphi_{\mathcal{E},p}\Big(\bigcup_{j=1}^{n}\theta_{\lambda_j}\Big) \\
(a_s)_{s \in \theta}&\to (a_s)_{s\in\bigcup_{j=1}^{n}\theta_{\lambda_j}}.
\end{align*}

As $\varphi_{\mathcal{E},p}(\cup_{j=1}^{n}\theta_{\lambda_j})$ is the direct sum of $\oplus_{j=1}^n\varphi_{\mathcal{E}^*,p}(\theta(\lambda_j))$ with a finite dimensional space, our inductive hypothesis holds for it as well. Indeed, it is clear that

$$\varphi_{\mathcal{E},p}(\cup_{j=1}^{n}\theta_{\lambda_j})\cong \R\oplus \Big(\bigoplus_{j=1}^n \varphi_{\mathcal{E}^*,p}(\theta(\lambda_j))\Big).$$

\noindent Hence, the inductive hypothesis and \emph{lemma \ref{u8u}}, give us that $ \varphi_{\mathcal{E},p}(\cup_{j=1}^{n}\theta_{\lambda_j})$ is $\ell_p$-saturated as well.

Say $E\subset \varphi_{\mathcal{E},p}(\theta)$ is an infinite dimensional subspace.

\textbf{Case 1:} $\exists j\in\N$ such that $P_{\lambda_j}:E\to\varphi_{\mathcal{E},p}(\bigcup_{i=1}^{j}\theta_{\lambda_i})$ is not strictly singular.

Then there exists an infinite dimensional subspace $\tilde{E}\subset E$ such that ${P_{\lambda_j}}_{|\tilde{E}}$ is an isomorphism onto its image. By our inductive hypothesis,  $\varphi_{\mathcal{E},p}(\bigcup_{i=1}^{j}\theta_{\lambda_i})$ is $\ell_p$-saturated, so we are done.

\textbf{Case 2:} $P_{\lambda_j}:E\to\varphi_{\mathcal{E},p}(\bigcup_{i=1}^{j}\theta_{\lambda_i})$ is strictly singular, for all $j\in\N$.

Claim: $\exists (x_n)_{n\in\N}$ a normalized sequence in $E$ such that $P_{\lambda_j}(x_n)\to 0$, as $n\to \infty$, $\forall j\in\N$.

Indeed, by a well-known consequence of the definition of strictly singular operators, for all $j\in\N$, there exists a normalized sequence $(x^j_n)_{n\in\N}$ such that  
$\|P_{\lambda_j}(x^j_n)\|<1/n $, for all $n\in\N$. Let $(x_n)_{n\in\N}$ be the diagonal sequence of the sequences $(x^j_n)_{n\in\N}$, i.e., $x_n=x^n_n$, for all $n\in\N$. As, $i\leq j$ 
implies $\|P_{\lambda_i}(x)\|\leq \|P_{\lambda_j}(x)\|$, for all $x\in E$, $(x_n)_{n\in\N}$ has the required property.

Say $(\eps_i)_{i\in\N}$ is a sequence of positive numbers converging to zero. Using the claim above and the fact that $P_{\lambda_j}(x)\to x$, as $n\to\N$, for all 
$x\in\varphi_{\mathcal{E},p}(\theta)$, we can pick increasing sequences of natural numbers $(n_k)_{k\in\N}$ and $(l_k)_{k\in\N}$ such that

$i)$ $\ds{\|P_{\lambda_{l_k}}(x_{n_k})-x_{n_k}\|_\theta<\eps_k}$, for all $k\in\N$, and

$ii)$ $\ds{\|P_{\lambda_{l_k}}(x_{n_{k+1}})\|_\theta<\eps_k}$, for all $k\in\N$.

For all $k\in \N$, let $y_k=P_{\lambda_{l_k}}(x_{n_k})-P_{\lambda_{l_{k-1}}}(x_{n_k})$. Choosing $\eps_k$ small enough we can assume $\left\|y_k\right\|^p_\theta\in(\frac{1}{2},2)$. 
It's easy to see that $(y_k)_{k\in\N}$ is equivalent to $(\tilde{e}_k)_{k\in\N}$, where $(\tilde{e}_k)_{k\in\N}$ is the standard $\ell_p$-basis. Indeed, pick $a_1,...,\ a_N\in\R$, 
then

\begin{align*}
\ds{\frac{1}{2}\sum_{i=1}^{N}|a_i|^p\leq\sum_{i=1}^{N}\|a_i y_i\|^p_\theta=\|\sum_{i=1}^{N}a_i y_i\|^p_\theta=\sum_{i=1}^{N}\left\|a_i y_i\right\|^p_\theta
\leq 2\sum_{i=1}^{N}|a_i|^p},
\end{align*}

\noindent where the equalities above only hold because the supports of $(y_k)_{k\in\N}$ are completely incomparable. Therefore, by choosing $(\eps_k)_{k\in\N}$ converging to zero fast enough, 
the principle of small perturbations (see \cite{AK}, \emph{theorem $1.3.9$}) gives us that $(x_{n_k})_{k\in\N}$ is equivalent to $(y_k)_{k\in\N}\sim(\tilde{e}_k)_{k\in\N}$. So $E$ contains a copy of $\ell_p$.

The proof that $\varphi_{\mathcal{E},0}(\theta)$ is $c_0$-saturated of $\theta\in\WF$ is analogous.
\end{proof}

By letting $\mathcal{E}$ be a basis for the universal space $C(\Delta)$ we get the following corollary.

\begin{cor}
The set of universal spaces cannot be separated by a Borel set from the set of $\ell_p$-saturated spaces, for all $p\in[1,\infty)$, i.e., there is no Borel subset $U\subset\SB$ such that all the universal spaces (of $\SB$) are in $U$ and all the $\ell_p$-saturated spaces (of $\SB$) are not in $U$.
\end{cor}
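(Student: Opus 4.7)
The plan is to apply \emph{Lemma \ref{arroto}} with $\mathcal{E}$ chosen to be a Schauder basis of $C(\Delta)$, and then translate any hypothetical Borel separator into a Borel subset of $\Tr$ that must coincide with $\IF$, contradicting the classical fact that $\IF$ is not Borel.

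More precisely, I would first fix a basic sequence $\mathcal{E}=(e_n)_{n\in\N}$ with $\overline{\mathrm{span}}\{\mathcal{E}\}=C(\Delta)$ (for instance, a Schauder basis of $C[0,1]$ transported via the isomorphism $C(\Delta)\cong C[0,1]$). For each $p\in[1,\infty)$ this produces the Borel map $\varphi_{\mathcal{E},p}:\Tr\to\SB$ introduced above. By \emph{Lemma \ref{arroto}}, if $\theta\in\IF$ then $\varphi_{\mathcal{E},p}(\theta)$ contains $\overline{\mathrm{span}}\{\mathcal{E}\}=C(\Delta)$, and since every separable Banach space embeds into $C(\Delta)$ this makes $\varphi_{\mathcal{E},p}(\theta)$ itself universal; while if $\theta\in\WF$, then $\varphi_{\mathcal{E},p}(\theta)$ is $\ell_p$-saturated.

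I would then suppose for contradiction that some Borel set $U\subset\SB$ contains every universal space of $\SB$ and is disjoint from every $\ell_p$-saturated space of $\SB$. The set $V:=\varphi_{\mathcal{E},p}^{-1}(U)$ is Borel in $\Tr$ (since $\varphi_{\mathcal{E},p}$ is Borel measurable), and the previous step yields $\IF\subset V$ and $V\cap\WF=\emptyset$; because $\Tr=\WF\sqcup\IF$, this forces $V=\IF$. That contradicts the fact, recalled in the Background section, that $\IF$ is complete analytic in $\Tr$ and in particular not Borel. The only small point to verify is that a space containing $C(\Delta)$ as a subspace is indeed universal, which is immediate from the universality of $C(\Delta)$ itself; no serious obstacle arises, and the corollary is essentially a one-line consequence of \emph{Lemma \ref{arroto}} combined with the non-Borelness of $\IF$.
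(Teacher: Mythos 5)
Your proposal is correct and follows exactly the route the paper intends: the paper derives the corollary immediately from \emph{Lemma \ref{arroto}} by taking $\mathcal{E}$ to be a basis of $C(\Delta)$, so that $\varphi_{\mathcal{E},p}$ sends $\IF$ into the universal spaces and $\WF$ into the $\ell_p$-saturated spaces, whence a Borel separator would pull back to a Borel set equal to $\IF$. Your write-up supplies precisely the details the paper leaves implicit, with no gaps.
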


\section{Complementability of ideals of $\cL(X)$, Part I.}

\subsection{Unconditionally converging operators.}

 We say that an operator $T:X\to Y$ is \emph{unconditionally converging} (see \cite{Pe}) if it maps weakly unconditionally Cauchy series into unconditionally converging series. Let $X$ and $Y$ be Banach spaces. We let $\cU(X)$ be the set of unconditionally converging operators from $X$ to itself. 

We write $Y\overset{\perp}{\lemb}X$ if $Y$ is isomorphic to a complemented subspace of $X$.

\begin{thm}\label{uc}
Let $\cU=\{ X\in\SB|  \cU(X)\overset{\perp}{\lemb}\cL(X)\}$. Then $\cU$ is complete coanalytic.
\end{thm}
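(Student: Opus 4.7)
The plan is to show $\cU$ is coanalytic and then to Borel-reduce $\WF$ to $\cU$, which by \emph{Lemma} \ref{arroto} will yield $\Pi^1_1$-completeness in one stroke.

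For the upper bound, the starting point is the characterization from \cite{BBG} flagged in the introduction: $\cU(X)\overset{\perp}{\lemb}\cL(X)$ if and only if $c_0\not\emb X$. (The direction $c_0\not\emb X\Rightarrow \cU(X)=\cL(X)$ is just Bessaga--Pełczyński, making the complementation trivial; the nontrivial direction is the one supplied by \cite{BBG}.) It therefore suffices to show $\{X\in\SB \mid c_0\emb X\}$ is analytic. Using the Kuratowski--Ryll--Nardzewski selectors $(S_n)$ from \emph{Lemma} \ref{lll}, this set is the projection of a Borel set defined by
$$\exists (k_n)\in\N^{\N},\ \exists K\in\Q_+,\ \forall N\in\N,\ \forall (a_i)_{i=1}^N\in\Q^N:\ \tfrac{1}{K}\max_i|a_i|\leq \Big\|\sum_{i=1}^N a_i S_{k_i}(X)\Big\|_{C(\Delta)}\leq K\max_i|a_i|,$$
hence $\cU$ is coanalytic.

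For $\Pi^1_1$-hardness, I take $\mathcal{E}=(e_n)$ to be the standard basis of $c_0$ sitting inside $C(\Delta)$, and define $f:\Tr\to\SB$ by $f(\theta)=\varphi_{\mathcal{E},2}(\theta)$. By the proposition preceding \emph{Lemma} \ref{arroto} this is Borel. By \emph{Lemma} \ref{arroto}(i), if $\theta\in\IF$ then $f(\theta)$ contains a copy of $\overline{\text{span}}\{\mathcal{E}\}\cong c_0$, so $f(\theta)\notin\cU$. By \emph{Lemma} \ref{arroto}(ii), if $\theta\in\WF$ then $f(\theta)$ is $\ell_2$-saturated; since $c_0$ contains no copy of $\ell_2$, no subspace of $f(\theta)$ can be isomorphic to $c_0$, hence $f(\theta)\in\cU$. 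Therefore $f^{-1}(\cU)=\WF$, and since $\WF$ is $\Pi^1_1$-complete (\cite{Ke}, theorem $27.1$), $\cU$ is $\Pi^1_1$-hard.

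The real work is thus black-boxed in the characterization borrowed from \cite{BBG}; once that structural equivalence is granted, the descriptive set-theoretic half of the argument is a routine coding of $c_0$-embeddability plus a direct application of the $\ell_p$-Baire sum machinery already developed in Section 3. The only point worth checking carefully is choosing $p=2$ (any $p\in(1,\infty)$ would do): picking $p=1$ or $p=0$ would fail, the former because $\varphi_{\mathcal{E},1}(\theta)$ could contain $\ell_1$ (fine for $c_0$-avoidance, also works), the latter because $\varphi_{\mathcal{E},0}(\theta)$ is $c_0$-saturated and thus $f(\theta)$ would always contain $c_0$, killing the reduction.
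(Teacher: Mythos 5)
Your proposal is correct and follows essentially the same route as the paper: both reduce the statement to $\cU=\text{NC}_{c_0}$ via the characterization in \cite{BBG}, observe that non-containment of $c_0$ is coanalytic, and obtain hardness from $\varphi_{\mathcal{E},2}^{-1}(\cU)=\WF$ with $\mathcal{E}$ the $c_0$-basis using \emph{Lemma} \ref{arroto}. The only difference is that you spell out the analyticity coding of $c_0$-embeddability, which the paper simply cites as well known (and attributes the completeness of $\text{NC}_{c_0}$ to \cite{Bo}).
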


\begin{proof}
In order to show this we only need to use that $\cU(X)$ is complemented in $\cL(X)$ if, and only if, $c_0$ does not embed in $X$ (see \cite{BBG}, pag. 452). Therefore, $\cU=\text{NC}_{c_0}$ (where $\text{NC}_X=\{Y\in\SB|X\not\emb Y\}$, for $X\in\SB$). Applying \emph{lemma \ref{arroto}} to $p=2$, and letting $\mathcal{E}$ be the standard basis of $c_0$, we obtain that $\varphi_{\mathcal{E},p}^{-1}(\cU)=\WF$. As $\text{NC}_{X}$ is well known to be coanalytic for all $X\in\SB$, we are done. We would like to point out that $\text{NC}_X$ was shown to be complete coanalytic, for all infinite dimensional $X\in\SB$, in \cite{Bo}, so this result is actually just a corollary of \cite{Bo}, and \cite{BBG}. 
\end{proof}

\subsection{Weakly Compact Operators.}

We say that an operator $T:X\to Y$ is \emph{weakly compact} if it maps bounded sets into relatively weakly compact sets. For $X\in\SB$ we let $\cW(X)$ be the set of weakly compact operators on X to itself.

\begin{thm}\label{wwwww}
Let $\cW=\{ X\in\SB| \cW(X)\overset{\perp}{\lemb}\cL(X)\}$. Then $\cW$ is $\Pi^1_1$-hard. In particular, $\cW$ is non Borel.
\end{thm}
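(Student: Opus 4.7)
The plan is to mirror the argument for Theorem \ref{uc}, producing a Borel reduction from $\WF$ to $\cW$ through the $\ell_p$-Baire sum machinery of Section $3$. The key ingredient I will draw from \cite{BBG} is the one-sided implication: if $\cW(X)\overset{\perp}{\lemb}\cL(X)$, then $c_0$ does not embed into $X$. Unlike the unconditionally converging case, only the necessary direction of such a characterization appears to be available in the generality we need, which is why the target conclusion here is $\Pi^1_1$-hardness rather than complete coanalyticity.

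Concretely, I would take $\mathcal{E}$ to be the standard unit vector basis of $c_0$ and consider the Borel function $\varphi_{\mathcal{E},2}:\Tr\to\SB$ defined in Section $3$. The goal is to verify that $\varphi_{\mathcal{E},2}^{-1}(\cW)=\WF$, which, combined with the $\Pi^1_1$-completeness of $\WF$ in $\Tr$, would produce a Borel reduction witnessing $\Pi^1_1$-hardness of $\cW$.

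For $\theta\in\IF$, Lemma \ref{arroto}$(i)$ gives $c_0\cong\overline{span}\{\mathcal{E}\}\emb\varphi_{\mathcal{E},2}(\theta)$, so the cited implication forces $\varphi_{\mathcal{E},2}(\theta)\notin\cW$. For $\theta\in\WF$, Lemma \ref{arroto}$(ii)$ gives that $\varphi_{\mathcal{E},2}(\theta)$ is $\ell_2$-saturated; since neither $c_0$ nor $\ell_1$ contains an isomorphic copy of $\ell_2$ (by Pitt's theorem together with the $c_0$-saturation of $c_0$), the space $\varphi_{\mathcal{E},2}(\theta)$ contains no copy of $c_0$ and no copy of $\ell_1$. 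Moreover, because $\mathcal{E}$ is $1$-unconditional and the elements of any segment of $\theta$ live at distinct levels, the norm $\|\cdot\|_{\mathcal{E},2,\theta}$ depends only on $(|x(s)|)_{s\in\theta}$; in any ordering of $\theta$ the tree basis $(\delta_s)_{s\in\theta}$ is then a $1$-unconditional Schauder basis of $\varphi_{\mathcal{E},2}(\theta)$. By James's theorem (a Banach space with an unconditional basis is reflexive if, and only if, it contains no isomorphic copy of $c_0$ and no isomorphic copy of $\ell_1$), $\varphi_{\mathcal{E},2}(\theta)$ is reflexive. Hence $\cW(\varphi_{\mathcal{E},2}(\theta))=\cL(\varphi_{\mathcal{E},2}(\theta))$ is trivially complemented in itself, so $\varphi_{\mathcal{E},2}(\theta)\in\cW$.

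The main obstacle I anticipate is organizing the well-founded case: one must move from ``$\ell_2$-saturated'' (Lemma \ref{arroto}) to ``no $c_0$ and no $\ell_1$,'' and then to reflexivity via James's theorem, a step that requires the unconditionality of the tree basis and therefore a norm-level verification from the form of $\|\cdot\|_{\mathcal{E},2,\theta}$. A secondary concern is the precise shape of the result from \cite{BBG}: should it come with an auxiliary hypothesis on $X$, for instance the existence of an unconditional basis, this hypothesis would need to be checked on the ill-founded side, and it is automatically available from the same tree-basis argument used above.
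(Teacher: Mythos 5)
Your reduction is exactly the paper's: the Borel map $\varphi_{\cE,2}$ with $\cE$ the unit vector basis of $c_0$, the result of \cite{BBG} that $c_0\emb X$ forces $\cW(X)$ to be uncomplemented in $\cL(X)$ (handling the ill-founded side via Lemma \ref{arroto}(i)), and reflexivity, hence $\cW(X)=\cL(X)$, on the well-founded side. The only divergence is how reflexivity is obtained for $\theta\in\WF$. The paper simply invokes Lemma \ref{ppoo} (quoted from \cite{S}): $\varphi_{\cE,p}(\theta)$ is reflexive for every basic sequence $\cE$, every $p\in(1,\infty)$, and every $\theta\in\WF$. You instead re-derive this for the specific $\cE$ at hand by combining the $\ell_2$-saturation from Lemma \ref{arroto}(ii) (which rules out copies of $c_0$ and $\ell_1$, since every infinite-dimensional subspace of either contains $c_0$ or $\ell_1$ respectively, while $\ell_2$ contains neither) with the $1$-unconditionality of the tree basis and James's theorem. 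That argument is correct and self-contained, but note it leans on the special feature that the $c_0$-norm, and hence $\|\cdot\|_{\cE,2,\theta}$, depends only on the absolute values of the coordinates (elements of a segment sit at distinct levels), which is what makes the tree basis unconditional; Lemma \ref{ppoo} needs no such hypothesis and holds for arbitrary basic sequences. Either route completes the proof, yours at the cost of a norm-level verification, the paper's at the cost of an external citation.
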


This result is a simple consequence of the following lemma (whose statement and part of its proof can be found in  \cite{S}, \emph{proposition $2.2$}, pag. 78).

\begin{lemma}\label{ppoo}
Let $\cE=(e_n)_{n\in\N}$ to be a basic sequence, and $p\in(1,\infty)$. Then $\varphi_{\cE,p}(\theta)$ is reflexive, for all $\theta\in\WF$. 
\end{lemma}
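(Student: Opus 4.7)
The plan is to prove the statement by transfinite induction on the order index $o(\theta)$, proving the result simultaneously for every basic sequence $\cE$. The base case $o(\theta)=1$ forces $\theta=\{\emptyset\}$, so $c_{00}(\theta)$ is one-dimensional and $\varphi_{\cE,p}(\theta)\cong\R$, which is trivially reflexive.

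For the inductive step, fix $\alpha<\omega_1$ with $\alpha>1$ and assume that $\varphi_{\mathcal{F},p}(\sigma)$ is reflexive for every basic sequence $\mathcal{F}$ and every $\sigma\in\WF$ with $o(\sigma)<\alpha$. Let $\theta\in\WF$ with $o(\theta)=\alpha$, and set $\Lambda=\{\lambda\in\N : (\lambda)\in\theta\}$. Using the isomorphism stated right before \emph{Lemma \ref{arroto}},
$$\varphi_{\cE,p}(\theta)\cong\R\oplus\left(\bigoplus_{\lambda\in\Lambda}\varphi_{\cE^*,p}(\theta(\lambda))\right)_{\ell_p},$$
and \emph{Proposition \ref{p2}} yields $o(\theta(\lambda))<\alpha$ for every $\lambda\in\Lambda$. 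Applying the inductive hypothesis to the basic sequence $\cE^*$ and to each tree $\theta(\lambda)\in\WF$, each summand $\varphi_{\cE^*,p}(\theta(\lambda))$ is reflexive.

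It then suffices to invoke the standard fact that, for $p\in(1,\infty)$, the $\ell_p$-direct sum of an arbitrary family of reflexive Banach spaces is reflexive (the unit ball is weakly compact by a Tychonoff-type argument, or equivalently one checks that the natural identification with the double dual is surjective coordinatewise and preserves the $\ell_p$-norm by duality between $\ell_p$ and $\ell_{p'}$). Adding the one-dimensional summand $\R$ does not affect reflexivity, so $\varphi_{\cE,p}(\theta)$ is reflexive, closing the induction.

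The main obstacle is conceptual rather than technical: one must set up the induction so that the hypothesis quantifies over \emph{all} basic sequences, not merely over the fixed $\cE$, because the tree-decomposition reduces an $\cE$-Baire sum over $\theta$ to $\cE^*$-Baire sums over the sub-trees $\theta(\lambda)$. Once this is built into the statement being inductively proved, the argument becomes a routine combination of the decomposition of $\varphi_{\cE,p}(\theta)$ already recorded in the text and the well-known stability of reflexivity under $\ell_p$-sums for $1<p<\infty$. Crucially, the hypothesis $p>1$ is used only in this last step, which is why the conclusion fails for $p=1$ and $p=0$ (where the analogues of \emph{Lemma \ref{arroto}} produce $\ell_1$- or $c_0$-saturated, hence non-reflexive, spaces).
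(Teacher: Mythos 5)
Your proof is correct. The paper itself does not prove \emph{Lemma \ref{ppoo}} (it defers to the reference [S]), but your transfinite induction on $o(\theta)$ --- quantifying over all basic sequences, using the decomposition $\varphi_{\cE,p}(\theta)\cong\R\oplus(\bigoplus_{\lambda\in\Lambda}\varphi_{\cE^*,p}(\theta(\lambda)))_{\ell_p}$ together with \emph{Proposition \ref{p2}} and the stability of reflexivity under $\ell_p$-sums for $1<p<\infty$ --- is exactly the template the paper uses for the analogous statements (\emph{Lemma \ref{arroto}}, \emph{Theorem \ref{bsnonborel}}, \emph{Theorem \ref{qwe}}), so it is essentially the intended argument.
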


\begin{proof}\textbf{(\emph{of theorem \ref{wwwww}})}
In order to show this we will use another result of \cite{BBG} (pag. 450). In this paper  it is shown that if $c_0\emb X$, then $\cW(X)$ is not complemented in $\cL(X)$. Let 
$\varphi_{\cE,2}:\text{Tr}\to\SB$, where $\cE$ is the standard basis of $c_0$. Let's observe that $\varphi_{\cE,2}^{-1}(\cW)=\text{WF}$. Indeed, if 
$\theta\in\text{IF}$ we saw that $c_0\emb \varphi_{\cE,2}(\theta)$, hence $\varphi_{\cE,2}(\theta)\notin\cW$. If $\theta\in\WF$, then \emph{lemma \ref{ppoo}} implies that $\varphi_{\cE,2}(\theta)$ is reflexive, which implies 
$\varphi_{\cE,2}(\theta)\in\cW$. Indeed, a Banach space is reflexive if, and only if, its unit ball is weakly compact, therefore $\cW(X)=\cL(X)$. 
\end{proof}

\begin{problem}
Is $\cW$ coanalytic? If yes, we had shown that $\cW$ is complete coanalytic.
\end{problem}

\section{Geometry of Banach spaces.}

\subsection{Banach-Saks Property.}

A Banach space $X$ is said to have the \emph{Banach-Saks property} if every bounded sequence $(x_n)_{n\in\N}$ in $X$ has a subsequence $(x_{n_k})_{k\in\N}$ such that its Cesaro mean 
$n^{-1}\sum_{k=1}^n x_{n_k}$ is norm convergent. We denote the subset of $\SB$ coding the separable Banach spaces with the Banach-Saks 
property by $\text{BS}$.

In \cite{Be} (pag. 373) B. Beauzamy characterized not having the  Banach-Saks property in terms of the existence of a sequence satisfying some geometrical 
inequality. Precisely:

\begin{thm}\label{bs}
A $X\in\SB$ does not have the Banach-Saks property if, and only if, there exist $\eps>0$ and a sequence $(x_n)_{n\in\N}$ in $B_X$ such that, for all subsequences $(x_{n_k})_{k\in\N}$, 
$\forall m\in\N$, and  $\forall\ell\in\{1,...,m\}$, the following holds

$$\ds{\big\|\frac{1}{m}\big(\sum_{k=1}^\ell x_{n_k}-\sum_{k=\ell+1}^m x_{n_k}\big)\big\|\geq\eps}.$$
\end{thm}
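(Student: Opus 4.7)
The equivalence splits naturally: sufficiency is a direct computation, while necessity requires a Ramsey-type extraction.

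\smallskip
\emph{Sufficiency $(\Leftarrow)$.} Given such $\eps>0$ and $(x_n)\subset B_X$, fix any subsequence $(x_{n_k})$ and set $\sigma_m := \frac{1}{m}\sum_{k=1}^{m} x_{n_k}$. The algebraic identity
$$\frac{1}{m}\Big(\sum_{k=1}^{\ell}x_{n_k}-\sum_{k=\ell+1}^{m}x_{n_k}\Big) \;=\; \frac{2\ell}{m}\,\sigma_\ell - \sigma_m,$$
evaluated at $\ell=\lfloor m/2\rfloor$ (where $2\ell/m\to 1$), would force the left-hand side to tend to $0$ whenever $(\sigma_m)_m$ is norm-Cauchy, contradicting the uniform lower bound $\eps$. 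Thus no subsequence of $(x_n)$ has a norm-convergent Cesaro mean, i.e., $X\notin\text{BS}$.

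\smallskip
\emph{Necessity $(\Rightarrow)$.} Pick a bounded sequence $(x_n)\subset B_X$ witnessing $X\notin\text{BS}$: no subsequence has norm-convergent Cesaro means. For each $q\in\N$ define
$$\mathcal{A}_q := \Big\{(n_k)\in[\N]^\infty \,:\, \exists\,m,\ \exists\,\ell\in\{1,\ldots,m\},\ \big\|\tfrac{1}{m}\big(\sum_{k=1}^{\ell}x_{n_k}-\sum_{k=\ell+1}^{m}x_{n_k}\big)\big\| < \tfrac{1}{q}\Big\},$$
which is open in $[\N]^\infty$ since membership is witnessed by finitely many coordinates of $(n_k)$. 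By the Galvin--Prikry Ramsey theorem, every infinite set contains an infinite subset homogeneous for each $\mathcal{A}_q$. If some $q$ admits an infinite $L\subset\N$ with $[L]^\infty\cap\mathcal{A}_q=\emptyset$, then the subsequence $(x_n)_{n\in L}$ with $\eps=1/q$ satisfies the sought inequality uniformly over all its further subsequences and all $m,\ell$, and we are done.

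\smallskip
Otherwise, ``positive'' homogenization works for every $q$, and diagonalizing yields an infinite $N^*$ such that every infinite $M\subseteq N^*$ has $\inf_{m,\ell}\|\tfrac{1}{m}(\sum_{k=1}^{\ell}x_{M(k)}-\sum_{k=\ell+1}^{m}x_{M(k)})\|=0$. The plan is to contradict the failure of Banach--Saks for the original $(x_n)$ by recursively extracting a further subsequence of $(x_n)_{n\in N^*}$ whose Cesaro means are norm-Cauchy: at stage $q$, apply the $\mathcal{A}_{2^q}$ witness to a tail of the current prefix to append a ``nearly cancelling'' block at scale $1/2^q$, with the $q$-th block chosen long enough to dominate all previously appended lengths so that earlier Cesaro contributions dilute and the updated Cesaro mean stays within summable error of the previous.

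\smallskip
\emph{Main obstacle.} The delicate step is this last extraction: translating the purely combinatorial ``$\inf = 0$ on every subsequence of $N^*$'' into an honest subsequence with norm-Cauchy Cesaro means. The Galvin--Prikry witnesses $(m,\ell)$ need not satisfy $\ell=m$, so block-means of small norm are not immediately available---one has only $\|\tfrac{2\ell}{m}\sigma_\ell - \sigma_m\|<1/q$. Moreover, even within a block of small mean, partial sums can have norm close to $1$, causing the running Cesaro mean to oscillate between block boundaries. Both issues can be addressed simultaneously by a careful block-length schedule together with a final thinning of the constructed subsequence to suppress intermediate oscillations; alternatively, one may pass to a spreading model generated by $(x_n)_{n\in N^*}$, where $1$-subsymmetry makes the desired geometric inequality automatic.
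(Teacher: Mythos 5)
This statement is not proved in the paper at all: it is quoted verbatim from Beauzamy \cite{Be} as a known characterization, so there is no internal proof to compare against. Judged on its own merits, your proposal establishes only half of the equivalence. The sufficiency direction is correct and complete: the identity $\frac{1}{m}(\sum_{k\le\ell}x_{n_k}-\sum_{k>\ell}x_{n_k})=\frac{2\ell}{m}\sigma_\ell-\sigma_m$ checks out, and taking $\ell=\lfloor m/2\rfloor$ does force the left-hand side to $0$ along any subsequence with convergent Cesaro means, contradicting the uniform lower bound $\eps$.

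The necessity direction, however, has a genuine gap that you yourself flag under ``Main obstacle'' but do not close. The Galvin--Prikry dichotomy is set up correctly, and the ``negative-homogeneous'' branch does finish the proof. But in the remaining branch you are left with only the following information: for every infinite $M\subseteq N^*$ and every $q$ there exist \emph{some} $m,\ell$ with $\|\frac{2\ell}{m}\sigma_\ell-\sigma_m\|<1/q$. This is far weaker than norm-Cauchyness of the Cesaro means, and the recursive block extraction you sketch does not obviously converge: a witness with, say, $\ell$ much smaller than $m$ gives essentially no cancellation information, and even when a block has small mean its internal partial sums can push the running Cesaro mean away by an amount comparable to $1$ before the block completes, so the claimed ``summable error'' control is not justified by anything you wrote. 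Your closing remark that one could instead ``pass to a spreading model where $1$-subsymmetry makes the inequality automatic'' is in fact the route Beauzamy actually takes, but it is not a one-line fix: one must invoke Brunel--Sucheston to extract a subsequence generating a spreading model, split into cases according to whether the fundamental sequence of the spreading model is Cesaro convergent, and then transfer the resulting uniform inequality back to the original sequence with a controlled loss of $\eps$, uniformly over all further subsequences. As written, the hard implication is a plan plus an acknowledged obstruction, not a proof; you should either carry out the spreading-model argument or simply cite \cite{Be} for this direction as the paper does.
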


\begin{thm}\label{poi}
\text{BS} is coanalytic in $\SB$.
\end{thm}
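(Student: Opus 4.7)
The plan is to apply Beauzamy's characterization from \emph{Theorem \ref{bs}} and recast membership in the complement $\SB\setminus\text{BS}$ using the fixed countable dense set $\{S_n(X)\}_{n\in\N}$, so that the only uncountable quantifier in the end is a single existential over $\N^\N$. Concretely, I would argue that $X\notin\text{BS}$ if and only if there exist $q\in\N_{>0}$ and $\alpha\in\N^\N$ such that $\|S_{\alpha(n)}(X)\|\leq 1$ for every $n\in\N$ and, for every $m\in\N$, every $\ell\in\{1,\ldots,m\}$, and every strictly increasing tuple $n_1<n_2<\cdots<n_m$ in $\N$,
$$\left\|\frac{1}{m}\Big(\sum_{k=1}^{\ell} S_{\alpha(n_k)}(X)-\sum_{k=\ell+1}^{m} S_{\alpha(n_k)}(X)\Big)\right\|\geq \frac{1}{q}.$$
The ``only if'' direction is the key content; the converse follows from \emph{Theorem \ref{bs}} applied to the sequence $(S_{\alpha(n)}(X))_{n\in\N}\subset B_X$.

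For the ``only if'' direction, take a witness $(x_n)\subset B_X$ and $\eps>0$ as in \emph{Theorem \ref{bs}}. Using the density of $\{S_k(X)\}$ in $X$ together with the fact, recalled in the background section, that $\{S_k(X)\}$ is closed under rational scalings, I would approximate each $x_n$ by some $y_n\in\{S_k(X)\}\cap B_X$ with $\|y_n-x_n\|<\eps/4$ uniformly in $n$: first choose $z_n\in\{S_k(X)\}$ with $\|z_n-x_n\|$ very small, then rescale by a rational $r_n$ close to $1$ with $r_n\leq 1/\|z_n\|$ to land inside $B_X$. A triangle inequality then gives
$$\left\|\frac{1}{m}\Big(\sum_{k=1}^{\ell} y_{n_k}-\sum_{k=\ell+1}^{m} y_{n_k}\Big)\right\|\geq \eps-\frac{\eps}{4}=\frac{3\eps}{4}$$
for every subsequence and every $m,\ell$, and choosing $q\in\N_{>0}$ with $1/q\leq 3\eps/4$ produces the required $\alpha$ via $y_n=S_{\alpha(n)}(X)$.

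With this equivalence in hand, the complexity count is routine. Since $(X,n)\mapsto S_n(X)$ is Borel into $C(\Delta)$ and the norm is continuous, the condition $\|S_{\alpha(n)}(X)\|\leq 1$ is Borel in $(X,\alpha)\in\SB\times\N^\N$, and each of the countably many inequalities indexed by $(q,m,\ell,n_1,\ldots,n_m)$ is Borel as well. Therefore the set of pairs $(X,\alpha)$ witnessing $X\notin\text{BS}$ is a Borel subset of $\SB\times\N^\N$, and its projection onto $\SB$ is analytic; equivalently, $\text{BS}$ is coanalytic.

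The main obstacle I anticipate is the approximation step: the Beauzamy inequality must survive the replacement of $x_n$ by $y_n$ uniformly over all subsequences and all $m$, which forces the error $\|x_n-y_n\|$ to be controlled by a single constant independent of $n$ rather than a summable sequence of errors. This is precisely why the closure of the selectors under rational scalings, together with a uniform choice of $\delta=\eps/4$, is used instead of a more naive $2^{-n}$-type approximation.
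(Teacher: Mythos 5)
Your proof is correct and follows essentially the same route as the paper: Beauzamy's characterization (\emph{theorem \ref{bs}}) plus a quantifier count over a dense selector sequence, exhibiting the complement of $\text{BS}$ as the projection of a Borel subset of $\SB\times\N^\N$. The only (immaterial) difference is that the paper obtains a dense sequence in the ball directly via the Borel map $X\mapsto B_X$ and the selectors $S_n(B_X)$, whereas you use $S_n(X)$ and rescale by rationals into $B_X$; your explicit $\eps/4$-perturbation argument is precisely the approximation step the paper leaves implicit.
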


\begin{proof}
This is just a matter of applying \emph{theorem \ref{bs}} and counting quantifiers. Indeed,

\begin{align*}
X\in \text{BS} \Leftrightarrow &\forall (n_k)_{k\in\N}\in\N^\N,\ \forall \eps\in\Q_+, \\
&\exists m\in\N,\ \exists \ell\in\{1,...,m\},\ \exists k_1<...<k_m\in\N,\\
& s.t.\ \ds{\big\|\frac{1}{m}\big(\sum_{j=1}^\ell S_{n_{k_j}}(B_X)-\sum_{j=\ell+1}^m S_{n_{k_j}}(B_X)\big)\big\|<\eps},
\end{align*}

\noindent where $\{S_n\}_{n\in\N}$ is the sequence of Borel functions in \emph{lemma \ref{lll}}. As $X\mapsto B_X$ is a Borel function from $\SB$ into $\cF(C(\Delta))$, we are done.
\end{proof}

The previous theorem shows that BS is at least coanalytic in $\SB$, but it doesn't say anything about BS being Borel or not. The next theorem takes care of this by showing that 
coanalyticity is the most we can get of BS in relation to its complexity.

\begin{thm}\label{bsnonborel}
\text{BS} is $\Pi^1_1$-hard. Moreover, \text{BS} is complete analytic.
\end{thm}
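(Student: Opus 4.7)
The plan is to produce a Borel reduction from $\WF$ to $\text{BS}$, which by \emph{Theorem \ref{poi}} will upgrade $\Pi^1_1$-hardness to complete coanalyticity (I read the final phrase ``complete analytic'' as a typo for complete coanalytic, since BS is coanalytic by the previous theorem). The candidate reduction will be $\varphi_{\cE,2}:\Tr\to\SB$ from Section~3, where $\cE=(e_n)_{n\in\N}$ is chosen to be the standard unit vector basis of $\ell_1$. I will show $\varphi_{\cE,2}^{-1}(\text{BS})=\WF$.

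For $\theta\in\IF$, \emph{Lemma \ref{arroto}(i)} gives $\ell_1=\overline{\text{span}}\{\cE\}\emb\varphi_{\cE,2}(\theta)$. The unit vector basis of $\ell_1$ is bounded and every one of its subsequences has $\|\frac{1}{n}\sum_{k=1}^n e_{n_k}\|=1$ for all $n$, so $\ell_1\notin\text{BS}$. Since the Banach--Saks property passes to closed subspaces, $\varphi_{\cE,2}(\theta)\notin\text{BS}$.

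For $\theta\in\WF$ I would run a transfinite induction on $o(\theta)$. The base case $o(\theta)\leq 1$ is trivial, since then $\varphi_{\cE,2}(\theta)$ is at most one--dimensional. For the inductive step, with $o(\theta)=\alpha>1$ and $\Lambda=\{\lambda\in\N\mid (\lambda)\in\theta\}$, the decomposition recalled right before \emph{Lemma \ref{arroto}} yields
$$\varphi_{\cE,2}(\theta)\cong\R\oplus\Big(\bigoplus_{\lambda\in\Lambda}\varphi_{\cE^*,2}(\theta(\lambda))\Big)_{\ell_2}.$$
Because $\cE^*$ is isometrically again the unit vector basis of $\ell_1$, the inductive setup is symmetric; by \emph{Proposition \ref{p2}} we have $o(\theta(\lambda))<\alpha$ for each $\lambda$, so the inductive hypothesis applies summand by summand, giving that each $\varphi_{\cE^*,2}(\theta(\lambda))$ has the Banach--Saks property. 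Partington's stability theorem (\cite{P}) for $\ell_2$-sums then forces the right hand side, and therefore $\varphi_{\cE,2}(\theta)$, to lie in $\text{BS}$.

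Putting the two directions together, $\varphi_{\cE,2}$ is a Borel reduction from $\WF$ to $\text{BS}$; since $\WF$ is $\Pi^1_1$-hard, so is $\text{BS}$, and combined with \emph{Theorem \ref{poi}} we conclude that $\text{BS}$ is complete coanalytic. The step I expect to be most delicate is the inductive case: it rests on Partington's result being applicable to arbitrary countable $\ell_2$-sums (not merely to pairs) and on being able to feed in the inductive hypothesis uniformly across $\lambda\in\Lambda$, together with the slightly subtle fact that each $\theta(\lambda)$ has strictly smaller order than $\theta$. Once the reduction to Partington's theorem is made, the hereditary nature of BS, the index calculation of \emph{Proposition \ref{p2}}, and the self--similar form $\cE^*\cong\cE$ reduce everything to bookkeeping with the order index.
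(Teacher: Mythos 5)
Your proposal is correct and matches the paper's proof essentially verbatim: the same reduction $\varphi_{\cE,2}$ with $\cE$ the $\ell_1$-basis, the same transfinite induction on $o(\theta)$ using \emph{Proposition \ref{p2}} and the $\ell_2$-sum decomposition, and the same appeal to Partington's stability theorem (the paper phrases the inductive hypothesis over all basic sequences $\tilde{\cE}$ rather than invoking $\cE^*\cong\cE$, but this is cosmetic). You are also right that ``complete analytic'' in the statement is a typo for ``complete coanalytic.''
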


\begin{proof}
Let $\cE$ be the standard $\ell_1$ basis, and $p=2$. Let's verify that $\varphi_{\cE,p}^{-1}(\text{BS})=\WF$.

If $\theta\in\IF$ we clearly have $\ell_1\emb\varphi_{\cE,p}(\theta)$. Indeed, if $\beta$ is a branch of $\theta$ we have $\varphi_{\cE,p}(\beta)\cong \ell_1$. As $\ell_1\emb\varphi_{\cE,p}(\theta)$ and 
$\ell_1$ is clearly not in BS (take its standard basis for example, it clearly doesn't have a subsequence with norm converging Cesaro mean) we conclude that $\varphi_{\cE,p}(\theta)\notin 
\text{BS}$.

 Let's show that if $\theta\in\WF$, then $\varphi_{\cE,p}(\theta)\in\text{BS}$. We proceed by transfinite induction on the order of $\theta\in\WF$. Say $o(\theta)=1$. Then, for all basic sequence  $\tilde{\cE}$, $\varphi_{\tilde{\cE},p}(\theta)$ is $1$-dimensional and we are clearly done. Assume $\varphi_{\tilde{\cE},p}(\theta)\in \text{BS}$, for all basic sequences $\tilde{\cE}$, and all $\theta\in\WF$ with $o(\theta)<\alpha$, for some $\alpha<\omega_1$. Pick 
$\theta\in\WF$ with $o(\theta)=\alpha$, a basic sequence $\tilde{\cE}$, and let's show that $\varphi_{\tilde{\cE},p}(\theta)\in\text{BS}$.

Let $\Lambda=\{\lambda\in\N| (\lambda)\in \theta\}$. As $\theta\in\text{WF}$, \emph{Proposition 
\ref{p2}} gives us

$$o\big(\theta(\lambda)\big)<o(\theta)=\alpha, \ \forall \lambda\in\Lambda.$$

Our induction hypothesis implies that $\varphi_{\tilde{\cE}^*,p}(\theta(\lambda))\in\text{BS}$, for all $\lambda\in\Lambda$.  Now, notice that

$$\varphi_{\tilde{\cE},p}(\theta)\cong \R\oplus \Big(\bigoplus_{\lambda\in\Lambda}\varphi_{\tilde{\cE}^*,p}(\theta(\lambda))\Big)_{\ell_2},$$

\noindent where we get the $\R$ above because of the coordinate related to $s=\emptyset\in\theta$. By J. R. Partington's result in \cite{P} (pag. 370), we have that the $\ell_2$-sum of spaces in 
BS is also in BS. Hence, $(\bigoplus_{\lambda\in\Lambda}\varphi_{\tilde{\cE}^*,p}(\theta(\lambda)))_{\ell_2}$ is in BS and we conclude that $\varphi_{\tilde{\cE},p}(\theta)\in\text{BS}$. The 
transfinite induction is now over, and so is our proof.
\end{proof}

\subsection{Alternating Banach-Saks Property.}

A Banach space $X$ is said to have the \emph{alternating Banach-Saks property} if every bounded sequence $(x_n)_{n\in\N}$ in $X$ has a 
subsequence $(x_{n_k})_{k\in\N}$ such that its alternating-signs Cesaro mean $n^{-1}\sum_{k=1}^n (-1)^{k}x_{n_k}$ is norm convergent. We denote the set coding the separable Banach spaces with the alternating Banach-Saks 
property by $\text{ABS}$. 

In \cite{Be} (pag. 369) B. Beauzamy proves the following.

\begin{thm}\label{abs}
A $X\in\SB$ does not have the alternating Banach-Saks property if, and only if, there exist $\eps>0$ and a sequence $(x_n)_{n\in\N}$ in $B_X$ such that for all $\ell\in\N$, if $\ell\leq 
n(1)< ... < n(2^\ell)$, where $n(i)\in\N$, $\forall i\in\{1,...,2^\ell\}$, then

$$\ds{\big\|\sum_{i=1}^{2^\ell}c_ix_{n(i)}\big\|\geq \eps \sum_{i=1}^{2^\ell}|c_i| },$$

\noindent for all $c_1,...,c_{2^\ell} \in\R$.
\end{thm}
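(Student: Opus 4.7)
The statement is Beauzamy's characterization; my plan treats the two implications separately. For the reverse direction ($\Leftarrow$), my plan is to take $(x_n) \subset B_X$ and $\eps > 0$ satisfying the displayed inequality and show that no subsequence of $(x_n)$ has a norm-convergent alternating Cesaro mean. Given $(x_{n_k})_{k \in \N}$, once $\ell \in \N$ is chosen with $n_1 \geq \ell$, applying the inequality with $c_i = (-1)^i$ to the first $2^\ell$ entries of the subsequence gives that the alternating Cesaro mean $z_m$ at length $m = 2^\ell$ has norm at least $\eps$. If these means converged to some $z$, necessarily $\|z\| \geq \eps$, and telescoping $\sum_{k=1}^{m}(-1)^k x_{n_k} = m z_m$ would force $x_{n_m}$ to cluster alternately near $\pm z$ according to the parity of $k$. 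Choosing $2^\ell$ subsequence-indices $k_1 < \ldots < k_{2^\ell}$ with a balanced parity pattern and applying the inequality with $c_i \equiv 1$ would then force $\|\sum_i x_{n_{k_i}}\|$ to be small, contradicting the lower bound $\eps \cdot 2^\ell$. Thus $(x_n)$ witnesses failure of ABS.

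For the forward direction ($\Rightarrow$), suppose $(x_n) \subset B_X$ has no subsequence with norm-convergent alternating Cesaro mean. The task is to extract from $(x_n)$ a subsequence satisfying the block-$\ell_1$ estimate. First I would apply Rosenthal's $\ell_1$-theorem: if a subsequence is $\ell_1$-equivalent, the estimate holds automatically with a uniform constant; otherwise $(x_n)$ has a weakly Cauchy subsequence, to which we pass. In the weakly Cauchy case the differences $y_k = x_{2k} - x_{2k-1}$ are weakly null, and failure of ABS for $(x_n)$ forces failure of norm-convergence of Cesaro means along subsequences of $(y_k)$. A Brunel--Sucheston extraction produces a spreading model $(\tilde e_k)$ for a subsequence of $(y_k)$; this model must be equivalent to the $\ell_1$-basis, since a non-$\ell_1$ spreading model of a weakly null sequence yields, via a Mazur-type combination, a norm-null Cesaro mean in $(y_k)$ and hence a convergent alternating Cesaro mean in $(x_n)$, a contradiction. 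The $\ell_1$-spreading constant supplies the $\eps > 0$ sought.

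The main obstacle is translating the spreading-model data into the precise combinatorial form required -- the same $\eps$ must work for every block of size $2^\ell$ starting at index $\geq \ell$. I would handle this by a nested diagonalization. At level $\ell$, Ramsey's theorem combined with the $\ell_1$-spreading property yields an infinite set $\cN_\ell \subseteq \N$ along which every choice of $2^\ell$ indices produces a block that is $(\eps - 2^{-\ell})$-equivalent to the $\ell_1$-basis; refining so $\cN_{\ell+1} \subseteq \cN_\ell$, I would pick $m_\ell \in \cN_\ell$ past position $\ell$ and beyond all previously chosen $m_j$, and set $\tilde x_\ell = x_{m_\ell}$. The geometric losses $2^{-\ell}$ accumulate to something finite, yielding the estimate with a uniform $\eps/2 > 0$ on the final subsequence. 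The delicate part is bookkeeping the spreading constant so it does not deteriorate as $\ell$ grows; this is controlled by the uniformity built into the spreading model, together with choosing indices sufficiently far apart to absorb the tail errors inherent in the spreading-model approximation.
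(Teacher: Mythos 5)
First, a point of comparison: the paper does not prove this statement at all --- it is imported verbatim from Beauzamy \cite{Be} and used as a black box, so there is no internal argument to measure yours against. Your forward direction follows what is essentially the standard (and Beauzamy's own) route: Rosenthal's $\ell_1$-theorem, passing to differences of a weakly Cauchy subsequence, extracting a Brunel--Sucheston spreading model which must be $\ell_1$ on pain of producing norm-null Cesaro means, and then re-indexing along a rapidly increasing subsequence so that ``blocks of length $n$ starting past position $n$'' becomes ``blocks of length $2^\ell$ starting past position $\ell$.'' As a sketch that direction is sound, modulo the usual care in normalizing the differences and in quoting the dichotomy for weakly null sequences without $\ell_1$-spreading models.

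The genuine gap is in the reverse direction. A minor issue first: applying the inequality to the \emph{first} $2^\ell$ entries of the subsequence requires $n_1\ge\ell$, hence covers only finitely many $\ell$; you must instead use blocks starting at position $\ge\ell$ inside the subsequence (legitimate since $n_k\ge k$). The serious problem is the claim that convergence $z_m\to z$ of the alternating Cesaro means forces $x_{n_m}$ to cluster alternately near $\pm z$: this is false. Writing $\sigma_m=mz_m$, one only gets $(-1)^mx_{n_m}=\sigma_m-\sigma_{m-1}=z+(w_m-w_{m-1})$ with $w_m=m(z_m-z)$ merely bounded; the choice $w_m=(-1)^mv$ gives $x_{n_m}=(-1)^mz+2v$, for which your balanced-parity test with $c_i\equiv1$ yields norm $2^{\ell+1}\|v\|$ --- not small, so no contradiction appears. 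A working argument avoids clustering altogether: from $z_{2p}-z_p\to0$ and $z_{4p}-z_{2p}\to0$ one gets $\bigl\|(\sigma_{4p}-\sigma_{2p})-2(\sigma_{2p}-\sigma_{p})\bigr\|=o(p)$, and this is a combination of the $x_{n_k}$ with $p<k\le4p$ (all original indices $\ge p+1$) whose coefficients have total absolute value $4p$; padding with zero coefficients up to a block of size $2^\ell\approx3p$ (so $\ell\ll p$ and the index constraint $n(1)\ge\ell$ holds), the hypothesis forces this norm to be at least $4p\eps$, a contradiction for large $p$. You should replace the clustering step with a cancellation of this kind, which genuinely uses the freedom in the coefficients $c_i$ rather than only signs.
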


\begin{thm}\label{bla}
\text{ABS} is coanalytic in $\SB$.
\end{thm}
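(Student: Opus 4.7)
The plan is to apply Beauzamy's characterization (Theorem \ref{abs}) and count quantifiers, in complete analogy with the proof of Theorem \ref{poi}. Negating that characterization, $X \in \text{ABS}$ if and only if for every $\eps > 0$ and every $(x_n) \in B_X$ there exist $\ell \in \N$, $\ell \leq n(1) < \cdots < n(2^\ell)$ in $\N$, and $c_1,\ldots,c_{2^\ell} \in \R$ with $\big\|\sum_{i=1}^{2^\ell} c_i x_{n(i)}\big\| < \eps \sum_{i=1}^{2^\ell} |c_i|$. I need to massage this formula so that every quantifier ranges either over a countable set or over a Polish space, with a Borel innermost predicate.

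Two reductions suffice. First, since the strict inequality is continuous in $(c_1,\ldots,c_{2^\ell})$, real witnesses can always be replaced by rational ones, so I may restrict to $c_i \in \Q$. Second, I would replace the quantifier over $(x_n) \in B_X^\N$ by one over $(n_k)_{k\in\N} \in \N^\N$, reading $x_k$ as $S_{n_k}(B_X)$, where $\{S_n(B_X)\}_n$ is a dense sequence in $B_X$ (produced by Lemma \ref{lll} applied to the Borel map $X \mapsto B_X \in \cF(C(\Delta))$). The forward direction is immediate. For the converse, if $(x_n) \in B_X$ and $\eps > 0$ witness failure of the ABS condition, approximate each $x_n$ by $y_n := S_{\sigma(n)}(B_X)$ with $\|y_n - x_n\| < \delta$; since $\big|\sum c_i y_{n(i)} - \sum c_i x_{n(i)}\big| \leq \delta \sum |c_i|$, the sequence $(y_n)$ together with the smaller rational $\eps - \delta$ witnesses failure as well. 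The perturbation error is absorbed on the right-hand side precisely because both sides of Beauzamy's inequality are $1$-homogeneous in the $c_i$'s.

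With these reductions one obtains
\begin{align*}
X \in \text{ABS} \ \Leftrightarrow\ & \forall (n_k)_{k\in\N}\in\N^\N,\ \forall \eps\in\Q_+,\\
& \exists \ell\in\N,\ \exists\, \ell \leq m(1) < \cdots < m(2^\ell) \in \N,\ \exists c_1,\ldots,c_{2^\ell}\in\Q,\\
& \text{s.t.}\ \big\|\sum_{i=1}^{2^\ell} c_i S_{n_{m(i)}}(B_X)\big\| < \eps \sum_{i=1}^{2^\ell} |c_i|.
\end{align*}
Since $X \mapsto B_X$ is Borel from $\SB$ into $\cF(C(\Delta))$ and each $S_n$ is Borel, the displayed inequality is a Borel predicate in $X$; all the inner existential quantifiers range over countable sets, so what is left is a single universal quantifier over $\N^\N$ with Borel kernel. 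This is the definition of a $\Pi^1_1$ set, and hence $\text{ABS}$ is coanalytic.

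The only step that hides actual content beyond the quantifier-count is the density/perturbation reduction from arbitrary $B_X$-sequences to sequences drawn from $\{S_n(B_X)\}$. The homogeneous form of Beauzamy's inequality makes this essentially automatic, but it is the one place where a brief genuine argument, rather than just bookkeeping of quantifiers, is actually required.
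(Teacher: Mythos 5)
Your proposal is correct and follows essentially the same route as the paper: the paper's proof consists of exactly the displayed quantifier-count reduction of Beauzamy's characterization, with the inner witnesses ranging over countable sets and a single universal quantifier over $\N^\N$. You merely spell out the density/perturbation and rational-coefficient reductions that the paper leaves implicit, which is a welcome clarification but not a different argument.
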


\begin{proof}
This is just a matter of applying \emph{theorem \ref{abs}} and counting quantifiers. Indeed, 

 \begin{align*}
X\in\text{ABS} \Leftrightarrow &\forall (n_k)_{n\in\N}\in \N^\N,\ \forall \eps\in\Q_+, \\
&\exists \ell\in\N,\ \exists \ell\leq k(1)<...<k(2^\ell)\in\N,\\
&s.t.\ \exists c_1,...,c_{2^\ell}\in\Q,\ \ds{\big\|\sum_{j=1}^{2^\ell}c_kS_{n_{k(j)}}(B_X)\big\|< \eps \sum_{j=1}^{2^\ell}|c_j| }.
\end{align*}
\end{proof}

Now we show that coanalyticity is the most we can get of \text{ABS} in relation to its complexity.

\begin{thm}\label{qwe}
\text{ABS} is $\Pi^1_1$-hard. Moreover, \text{ABS} is complete coanalytic. 
\end{thm}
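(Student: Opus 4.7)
The plan is to mirror the argument of Theorem \ref{bsnonborel}. Since Theorem \ref{bla} already establishes that $\text{ABS}$ is coanalytic, it remains to prove $\Pi^1_1$-hardness, and I would do so by exhibiting a Borel reduction from $\WF$ to $\text{ABS}$. Specifically, take $\cE=(e_n)_{n\in\N}$ to be the standard $\ell_1$ basis and $p=2$, and consider the Borel function $\varphi_{\cE,2}:\Tr\to\SB$ from Section 3. The key claim is that $\varphi_{\cE,2}^{-1}(\text{ABS})=\WF$, after which the $\Pi^1_1$-completeness of $\WF$ finishes the proof.

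For the forward inclusion, that $\theta\in\IF$ implies $\varphi_{\cE,2}(\theta)\notin\text{ABS}$, I would apply Lemma \ref{arroto}(i): a branch of $\theta$ produces a copy of $\overline{span}\{\cE\}\cong\ell_1$ inside $\varphi_{\cE,2}(\theta)$. Since $\ell_1$ fails $\text{ABS}$ (the alternating-sign Cesaro means of the unit-vector basis have norm $1$ and so cannot converge to zero), and since $\text{ABS}$ is clearly hereditary, the containment yields $\varphi_{\cE,2}(\theta)\notin\text{ABS}$.

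For the reverse inclusion, that $\theta\in\WF$ implies $\varphi_{\cE,2}(\theta)\in\text{ABS}$, I would proceed by transfinite induction on $o(\theta)$, strictly paralleling the Banach-Saks case. The base $o(\theta)=1$ gives a finite-dimensional space, trivially in $\text{ABS}$. For the inductive step, use the decomposition
$$\varphi_{\cE,2}(\theta)\cong \R\oplus\Bigl(\bigoplus_{\lambda\in\Lambda}\varphi_{\cE^*,2}(\theta(\lambda))\Bigr)_{\ell_2},$$
where $\Lambda=\{\lambda\in\N \mid (\lambda)\in\theta\}$. By Proposition \ref{p2}, each $o(\theta(\lambda))<o(\theta)$, so the inductive hypothesis (formulated for arbitrary basic sequences, as in Theorem \ref{bsnonborel}) gives that every summand is in $\text{ABS}$. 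One then concludes by the stability of $\text{ABS}$ under $\ell_2$-sums, the alternating-sign analogue of Partington's theorem invoked in the Banach-Saks proof.

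The main obstacle is precisely this stability statement: that the $\ell_2$-sum of a sequence of spaces with $\text{ABS}$ again has $\text{ABS}$. I expect a direct adaptation of Partington's argument to carry over, since the alternating signs do not interfere with the $\ell_2$-type estimates used to bound the tails. Failing a clean reference, I would verify stability by hand through Beauzamy's Theorem \ref{abs}: given a bounded sequence in the $\ell_2$-sum witnessing the failure of $\text{ABS}$, extract via a diagonal argument and the principle of small perturbations (as in the proof of Lemma \ref{arroto}) a block sequence essentially supported in a single summand whose signed combinations still satisfy the $\ell_1^{2^\ell}$-type lower bound of Theorem \ref{abs}, contradicting the $\text{ABS}$ of that summand.
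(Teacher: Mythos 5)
Your reduction is the same as the paper's (standard $\ell_1$ basis, $p=2$, and the claim $\varphi_{\cE,2}^{-1}(\text{ABS})=\WF$), and your treatment of the ill-founded direction matches the paper exactly. The gap is in the well-founded direction: your whole inductive step rests on the stability of ABS under infinite $\ell_2$-sums, which you do not prove and for which you have no reference --- Partington's paper \cite{P} establishes this for the Banach--Saks and weak Banach--Saks properties, not for the alternating version. The paper is deliberately engineered to avoid needing that statement. It first notes, via Lemma \ref{arroto}(ii), that for $\theta\in\WF$ the space $\varphi_{\tilde\cE,2}(\theta)$ is $\ell_2$-saturated and hence does not contain $\ell_1$; it then invokes Beauzamy's theorem that a space not containing $\ell_1$ has ABS if and only if it has WBS. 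This converts the problem into showing $\varphi_{\tilde\cE,2}(\theta)\in\text{WBS}$, where the inductive hypothesis (ABS of each $\varphi_{\tilde\cE^*,2}(\theta(\lambda))$, hence WBS) can be combined with Partington's actual corollary on the stability of WBS under $\ell_2$-sums.

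Your proposed fallback for proving the ABS stability by hand also does not work as sketched: a bounded sequence in an infinite $\ell_2$-sum witnessing the failure of ABS need not have a subsequence that is, even after small perturbations, essentially supported in a single summand (think of a sequence whose coordinates glide to infinity, like the unit vector basis of $\ell_2$ itself viewed as an $\ell_2$-sum of lines). The correct decomposition is into a part supported on finitely many fixed coordinates plus an essentially disjointly supported tail, and the tail must be controlled by the upper $\ell_2$-estimate rather than by confinement to one summand; making the $\ell_1^{2^\ell}$-lower-bound bookkeeping of Theorem \ref{abs} survive this decomposition for arbitrary coefficients is exactly the nontrivial content you would have to supply. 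Either carry out that argument in full, or follow the paper's detour through the $\ell_1$-free case and the WBS equivalence, which requires no new stability result.
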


\begin{proof}
Let $\cE$ be the standard $\ell_1$ basis, and $p=2$. We will show that $\varphi_{\cE,p}^{-1}(\text{ABS})=\WF$. 

If $\theta\in\IF$, we have 
$\ell_1\emb\varphi_{\cE,p}(\theta)$. As $\ell_1$ is  not in \text{ABS} (we can take its standard basis again, it clearly doesn't have a subsequence with norm converging alternating-signs Cesaro mean) 
we conclude that $\varphi_{\cE,p}(\theta)\notin \text{ABS}$. 

Let's show that if $\theta\in\WF$, then $\varphi_{\cE,p}(\theta)\in\text{ABS}$. We proceed by transfinite induction on the order of 
$\theta\in\WF$. Say $o(\theta)=1$. Then, for any basic sequence $\tilde{\cE}$,  $\varphi_{\tilde{\cE},p}(\theta)$ is $1$-dimensional and we are clearly done. Assume $\varphi_{\tilde{\cE},p}(\theta)\in \text{ABS}$ for all basic sequence $\tilde{\cE}$, and all $\theta\in\WF$ with 
$o(\theta)<\alpha$, for some $\alpha<\omega_1$. Pick $\theta\in\WF$ with $o(\theta)=\alpha$.

Using the same notation as in the proof of \emph{theorem \ref{bsnonborel}}, we have

$$\varphi_{\tilde{\cE},p}(\theta)\cong \R\oplus \Big(\bigoplus_{\lambda\in\Lambda}\varphi_{\tilde{\cE}^*,p}(\theta(\lambda))\Big)_{\ell_2}.$$

By \emph{lemma \ref{arroto}}, $\ell_1\not\emb\varphi_{\tilde{\cE},p}(\theta)$. B. Beauzamy showed in \cite{Be} (pag. 368) that a Banach space not containing $\ell_1$ has the alternating Banach-Saks 
property if, and only if, it has the weak Banach-Saks property. So, we only need to show that $\varphi_{\tilde{\cE},p}(\theta)$ is in WBS. As $\varphi_{\tilde{\cE},p}(\theta(\lambda))\in\text{ABS}$, for all 
$\lambda\in\Lambda$, we have $\varphi_{\tilde{\cE},p}(\theta(\lambda))\in\text{WBS}$, for all $\lambda\in\Lambda$. By a corollary of J. R. Partington (see \cite{P}, pag. $373$),  
$\big(\bigoplus_{\lambda\in\Lambda}\varphi_{\tilde{\cE}^*,p}(\theta(\lambda))\big)_{\ell_2}$ is also in WBS. Thus, we conclude that $\varphi_{\tilde{\cE},p}(\theta)\in\text{WBS}$, and we are done.
\end{proof}

\subsection{Weak Banach-Saks property.}

A Banach space is said to have the \emph{weak 
Banach-Saks property} if every weakly null sequence has a subsequence such that its Cesaro mean is norm convergent to zero. We denote the set coding the separable Banach spaces with the weak Banach-Saks 
property by $\text{WBS}$. The weak Banach-Saks property is often called 
Banach-Saks-Rosenthal property.

\begin{thm}\label{wbsnonborel}
WBS is $\Pi^1_1$-hard. In particular, WBS is non Borel.
\end{thm}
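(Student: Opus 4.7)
The plan is to adapt the $\ell_2$-Baire sum construction from the proofs of Theorems \ref{bsnonborel} and \ref{qwe}, but with a basic sequence $\cE$ whose closed linear span already fails the weak Banach-Saks property. Concretely, fix such a sequence --- for instance the canonical basis of Schreier's space $S$, which is weakly null yet, by a direct calculation with admissible sets, has no subsequence with norm-null Cesaro means. Put $Z=\overline{\text{span}}\{\cE\}$ and consider the Borel map $\varphi_{\cE,2}:\Tr\to\SB$. I will show $\varphi_{\cE,2}^{-1}(\text{WBS})=\WF$, which is a Borel reduction of $\WF$ to the complement of WBS and therefore witnesses $\Pi^1_1$-hardness.

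For $\theta\in\IF$, pick a branch $\beta\subset\theta$; exactly as in the proof of Theorem \ref{qwe}, $\varphi_{\cE,2}(\beta)\cong Z$, because on a chain any family of pairwise incomparable segments is essentially a single segment, so the outer sup collapses to $n=1$. Consequently $Z\emb\varphi_{\cE,2}(\theta)$. Since WBS passes to closed subspaces --- a weakly null sequence in $Z$ remains weakly null in any superspace, and its Cesaro means lie in $Z$, where the ambient norm and the $Z$-norm agree --- the failure of WBS transfers from $Z$ to $\varphi_{\cE,2}(\theta)$.

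For $\theta\in\WF$ I proceed by transfinite induction on $o(\theta)$, with the inductive hypothesis taken uniformly over basic sequences as in the ABS proof. The base case $o(\theta)=1$ gives a one-dimensional space, which is trivially in WBS. For the inductive step at $o(\theta)=\alpha$, use the standard decomposition
$$\varphi_{\tilde{\cE},2}(\theta)\cong \R\oplus\Big(\bigoplus_{\lambda\in\Lambda}\varphi_{\tilde{\cE}^*,2}(\theta(\lambda))\Big)_{\ell_2},$$
apply Proposition \ref{p2} to get $o(\theta(\lambda))<\alpha$ for every $\lambda\in\Lambda$, and invoke the inductive hypothesis to conclude that each summand is in WBS. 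The same corollary of Partington (\cite{P}, pag.\ 373) invoked in the proof of Theorem \ref{qwe} then gives that the $\ell_2$-sum is in WBS, and prepending the one-dimensional $\R$ summand is harmless.

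The main obstacle is verifying that the chosen $\cE$ genuinely spans a space failing WBS: one must show that for \emph{every} subsequence of the Schreier basis the Cesaro means are bounded below in $S$-norm, not merely for the canonical basis. This is a routine but careful computation with the admissibility condition. Once this is in place, the remainder of the argument is a direct parallel of Theorem \ref{qwe}, with Schreier's basis in place of the $\ell_1$-basis, and the IF direction now providing failure of WBS rather than failure of ABS.
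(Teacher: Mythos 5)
Your proof is correct and follows the paper's argument essentially verbatim: the same reduction $\varphi_{\cE,2}$, the same transfinite induction on $o(\theta)$ via Partington's $\ell_2$-sum stability of WBS, and the same hereditarity argument on the ill-founded side. The only difference is the seed sequence: the paper takes $\cE$ to be a basis of $C(\Delta)$ and cites Farnum's theorem that $C(\Delta)$ fails WBS, whereas you take the Schreier basis and verify the failure directly from admissibility --- both choices work (one small wording slip: $\varphi_{\cE,2}^{-1}(\mathrm{WBS})=\WF$ is a Borel reduction of $\WF$ to WBS itself, not to its complement, though the $\Pi^1_1$-hardness conclusion you draw is the right one).
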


\begin{proof}
First we notice that we cannot use the same $\cE$ as in \emph{theorem \ref{bsnonborel}}, this because, as $\ell_1$ has the Schur property, $\ell_1$ is clearly in WBS. Let $\cE$ be a basis for $C(\Delta)$, and $p=2$. It is shown in  \cite{F} that $C(\Delta)$ is not in WBS. If we proceed exactly as in the proof of \emph{theorem \ref{qwe}}, and use the stability of the weak Banach-Saks property under $\ell_2$-sums (see \cite{P}, pag. $373$), we will be done.
\end{proof}

\textbf{Remark}: It is worth noticing that the same $\varphi_{\cE,p}$ constructed above could be used to proof \emph{theorem \ref{bsnonborel}}, and \emph{theorem \ref{qwe}}.

With that being said, let's try to obtain more information about the complexity of WBS. For this we use the following lemma.

\begin{lemma}\label{llkk}
Let $(x_n)_{n\in\N}$ be a bounded sequence in a Banach space $X$. $(x_n)_{n\in\N}$ is weakly null if, and only if, every subsequence of $(x_n)_{n\in\N}$ has a convex block subsequence converging to zero in norm. In particular, if $(x_n)_{n\in\N}$ is a weakly null sequence in a Banach space $X$, and if $X$ embeds into another Banach space $Y$, then $(x_n)_{n\in\N}$ is weakly null in $Y$.
\end{lemma}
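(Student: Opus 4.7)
The plan is to establish the equivalence by two separate arguments and then read off the corollary about embeddings.

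For the forward direction, the essential tool is Mazur's theorem: a weakly null sequence has $0$ in the norm closure of the convex hull of every tail. Given a subsequence $(x_{n_k})_{k\in\N}$, I would construct a convex block subsequence $(y_j)_{j\in\N}$ inductively as follows. Having chosen $y_1,\dots,y_{j-1}$ using terms with indices up to $n_{k_{j-1}}$, the tail $(x_{n_k})_{k>k_{j-1}}$ is still weakly null, so by Mazur's theorem there exist $k_{j-1}<i_1<\dots<i_m$ and nonnegative scalars $c_1,\dots,c_m$ summing to $1$ with
$$\Big\|\sum_{r=1}^{m}c_r\,x_{n_{i_r}}\Big\|<\frac{1}{j}.$$
Setting $y_j=\sum_{r=1}^{m}c_r x_{n_{i_r}}$ and $k_j=i_m$ yields a convex block subsequence that norm-converges to zero.

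For the backward direction, I would argue by contrapositive. If $(x_n)_{n\in\N}$ is not weakly null, there exist $f\in X^*$, $\eps>0$, and a subsequence $(x_{n_k})_{k\in\N}$ with $|f(x_{n_k})|\ge\eps$; after possibly replacing $f$ by $-f$ and passing to a further subsequence I may assume $f(x_{n_k})\ge\eps$ for all $k$. Then every convex block $y=\sum c_r x_{n_{k_r}}$ (with $c_r\ge 0$, $\sum c_r=1$) of this subsequence satisfies $f(y)\ge\eps$, so $\|y\|\ge\eps/\|f\|$. Hence no convex block subsequence of $(x_{n_k})$ can converge to zero in norm, contradicting the hypothesis.

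For the ``in particular'' part, suppose $(x_n)_{n\in\N}$ is weakly null in $X$ and $X\emb Y$ via an isomorphic embedding $j:X\to Y$. Fix any subsequence of $(j(x_n))$; it is the image under $j$ of a subsequence of $(x_n)$. By the forward direction, the latter has a convex block subsequence norm-converging to zero in $X$; since $j$ is bounded, its image is a convex block subsequence of $(j(x_n))$ converging to zero in norm in $Y$. Applying the backward direction in $Y$ shows that $(j(x_n))$ is weakly null in $Y$, which is the claim.

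The only genuinely nontrivial ingredient is Mazur's theorem, invoked once in the forward direction; everything else is a bookkeeping argument in terms of disjointly supported convex combinations, so I anticipate no real obstacle beyond keeping the indexing of the block construction straight.
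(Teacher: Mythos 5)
Your proof is correct, and your forward (Mazur) direction and the embedding corollary match the paper's intent; where you genuinely diverge is in the backward direction. The paper's proof there is a two-step affair: it first invokes Rosenthal's $\ell_1$-theorem to extract a subsequence that is weak-Cauchy (the $\ell_1$-basis alternative being excluded because the usual $\ell_1$-basis admits no convex block sequence converging to zero in norm), shows that this weak-Cauchy subsequence must be weakly null by testing the limit value $\delta$ of $f(x_{n_k})$ against a norm-null convex block subsequence, and only then upgrades to weak nullness of the whole sequence by noting that every subsequence inherits the hypothesis. Your contrapositive argument shortcuts all of this: from a functional $f$ witnessing the failure of weak nullness you pass to a subsequence on which $f$ is at least $\eps$ with constant sign, and observe that every convex combination of its terms then has $f$-value at least $\eps$, hence norm at least $\eps/\|f\|$, so that particular subsequence already violates the hypothesis. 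This uses nothing beyond the definition of weak convergence together with the fact that the hypothesis quantifies over all subsequences, and it renders the appeal to Rosenthal's theorem unnecessary. Both arguments are valid; yours is the more elementary and arguably the more natural one for this statement, while the paper's route has the incidental feature of exhibiting a weak-Cauchy subsequence along the way.
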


\begin{proof} 
Say every subsequence of $(x_n)_{n\in\N}$ has a convex block subsequence 
converging to zero in norm. First we show that $(x_n)_{n\in\N}$ has a weakly null subsequence. As  $(x_n)_{n\in\N}$ is bounded, Rosenthal's $\ell_1$-theorem (see \cite{R2}) says that we can find a subsequence 
that is either weak-Cauchy or equivalent to the usual $\ell_1$-basis. As $\ell_1$'s usual basis has no subsequence with a convex block sequence converging to zero in norm, we conclude 
that    $(x_n)_{n\in\N}$ must have a weak-Cauchy subsequence. By hypothesis, this sequence must have a convex block subsequence converging to zero in norm, say 
$(y_k=\sum_{i=l_k+1}^{l_{k+1}}a_ix_{n_i})_{k\in\N}$, for some subsequence $(n_k)$ of natural numbers. 

Say $(x_{n_k})_{k\in\N}$ is not weakly null. Then pick $f\in X^*$ such that $f(x_{n_k})\not\to 0$. As $(x_{n_k})_{k\in\N}$ is weak-Cauchy, there exists $\delta\neq 0$ such that  
$f(x_{n_k})\to \delta$. Hence, $f(y_k)\to\delta$, absurd, because $(y_k)_{k\in\N}$ is norm convergent to zero.

Now assume $(x_n)_{n\in\N}$ is not weakly null. Then we can pick $f\in X^*$, a subsequence $(n_k)_{k\in\N}$, and $\delta\neq 0$, such that $f(x_{n_k})\to \delta$. As the subsequence 
$(x_{n_k})_{k\in\N}$ has the same property as $(x_{n})_{n\in\N}$, we can pick a weakly null subsequence, say $(x_{n_{k_l}})_{l\in\N}$. Hence $f(x_{n_{k_l}})\to 0$, absurd.

For the converse we only need to apply Mazur's theorem.
\end{proof}

 For every $X\in\SB$, let

\begin{align*}
E(X)=\Big\{\big((x_k)_{k\in\N},(n_k)_{k\in\N}\big)\in &X^\N\times[\N]|\ \exists r\in\N,\ \forall j\in\N,\ \|x_j\|<r\ \&\ \forall\eps\in\Q_+,\\ \forall n\in\N,\
&\exists a_n,...,a_{n+l}\in\Q_+ \big(\sum_{i=n}^{n+l} a_i=1\big),\ \big\|\sum_{i=n}^{n+l} a_ix_{n_i}\big\|<\eps\Big\},
\end{align*}

\noindent where $[\N]$ stands for the subset of $\N^\N$ consisting of all increasing sequences of natural numbers. As $[\N]$ is easily seen to be Borel, we have that $E(X)$ is Borel in $X^\N\times[\N]$. Define 
$F(X)$ by

$$F(X)^c=\pi\big(E(X)^c\big),$$

\noindent where $\pi$ denotes the projection into the first coordinate. Notice that $F(X)$ is coanalytic and that $F(X)$ consists of all the bounded sequences in $ X^\N$ with the property that all 
of its subsequences have a convex block subsequence converging to zero in norm. By  \emph{lemma \ref{llkk}}, $F(X)$ is the set of all  weakly null sequences of $X$.

\begin{thm}
The set of weakly null sequences $F(X)\subset X^\N$ of $X$ is coanalytic, for all $X\in\SB$.
\end{thm}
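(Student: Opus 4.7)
The theorem is a verification within the framework built in the paragraph preceding it. The plan has three steps: first show that $E(X)$ is Borel in $X^\N\times[\N]$; then deduce coanalyticity of $F(X)$ from the projection identity $F(X)^c=\pi(E(X)^c)$; and finally confirm that $F(X)$ really does coincide with the set of weakly null sequences by invoking \emph{Lemma \ref{llkk}}.

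For Borelness of $E(X)$, note that its defining predicate is the conjunction of the $F_\sigma$ boundedness clause $\exists r\in\N\ \forall j\in\N,\ \|x_j\|<r$ with the clause $\forall\eps\in\Q_+,\ \forall n\in\N,\ \exists l\in\N,\ \exists(a_n,\ldots,a_{n+l})\in\Q_+^{l+1}$ with $\sum_i a_i=1$ and $\|\sum_{i=n}^{n+l} a_ix_{n_i}\|<\eps$. All quantifiers range over countable sets. For each fixed $j$ the coordinate $n_j$ depends continuously on $(n_k)\in[\N]$, and coordinate extraction on $X^\N$ is continuous, so the joint map $((x_k),(n_k))\mapsto x_{n_j}$ is Borel; the innermost norm inequality is then Borel as well. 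A countable boolean combination of Borel sets being Borel, we conclude $E(X)\in\cB(X^\N\times[\N])$. Since $X$ is Polish and $[\N]$ is a closed $G_\delta$ in $\N^\N$, the space $X^\N\times[\N]$ is Polish, and the continuous projection $\pi$ sends the Borel set $E(X)^c$ to an analytic set. Thus $F(X)^c$ is analytic and $F(X)$ is coanalytic.

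For the identification with weakly null sequences, I would unpack the definition: $(x_j)\in F(X)$ iff $(x_j)$ is bounded and for every $(n_k)\in[\N]$, every $\eps>0$, and every starting index $n$, there is a finite rational convex combination $\sum_{i=n}^{n+l}a_ix_{n_i}$ of norm less than $\eps$. A diagonal construction -- take $\eps_k=1/k$ and choose starting indices $m_1<m_2<\cdots$ consecutively, each beyond the end of the previous block -- extracts from $(x_{n_k})$ a convex block subsequence converging to zero in norm. Conversely, from any such convex block subsequence one obtains the required starting-at-$n$ combinations by pushing a small positive rational weight onto $x_{n_n}$ (to meet the $\Q_+$ requirement) and renormalizing; the resulting norm is still smaller than $\eps$ for small enough perturbation. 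Hence $F(X)$ is precisely the collection of bounded sequences every subsequence of which admits a norm-null convex block subsequence, and by \emph{Lemma \ref{llkk}} this is exactly the set of weakly null sequences of $X$.

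The only mildly subtle point in the plan is the last equivalence between the starting-at-every-$n$ rational convex combination condition baked into $E(X)$ and the convex block subsequence characterization supplied by \emph{Lemma \ref{llkk}}. Once this is established, the descriptive-set-theoretic portion reduces to counting countable quantifiers and applying the classical fact that a continuous projection of a Borel set from a Polish space is analytic.
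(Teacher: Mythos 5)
Your proposal is correct and follows essentially the same route as the paper, whose "proof" is just the paragraph preceding the theorem: show $E(X)$ is Borel by counting countable quantifiers, observe that $F(X)^c=\pi(E(X)^c)$ is analytic as a Borel image of a Borel set, and identify $F(X)$ with the weakly null sequences via \emph{Lemma \ref{llkk}}. Your extra care about the equivalence between the starting-at-every-$n$ rational convex combinations in $E(X)$ and the convex-block-subsequence condition of the lemma is a detail the paper passes over silently, and your diagonal/perturbation argument handles it adequately.
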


Say  $F=F(C(\Delta))$. Let $A=\{(X,(x_n)_{n\in\N})\in\SB\times F|\forall n\in\N,\ x_n\in X\}$, and

\begin{align*}
G=\pi\Big(\Big\{\big(X,(x_n)_{n\in\N}\big)\in A|\ \exists\eps\in\Q_+,\  \ \forall n_1<&...<n_m,\ \forall \ell\in\{1,...,m\},\\
& \big\|\frac{1}{m}(\sum_{k=1}^\ell x_{n_k}-\sum_{k=\ell+1}^m x_{n_k})\big\|\geq\eps\Big\}\Big),
\end{align*}

\noindent where $\pi$ denotes the projection into $\SB$. B. Beauzamy's paper implies that  $\text{WBS}=G^c$. We had just shown that WBS is the complement of a Borel image of a coanalytic set. If a subset of a 
standard Borel space $X$ has this property we say that it belongs to $\Pi^1_2(X)$, see \cite{Ke} or \cite{S} for more details on the projective hierarchy $(\Sigma^1_n, \Pi^1_n)_{n\in\N}$.

\begin{thm}
$\text{WBS}\in\Pi^1_2(\SB)$.
\end{thm}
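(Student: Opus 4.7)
The plan is to read off the complexity directly from the decomposition $\text{WBS}=G^c$ supplied by Beauzamy's characterization together with the coanalyticity of $F(C(\Delta))$ established in the previous theorem. What remains is a careful quantifier count showing that $G$ is the projection of a coanalytic subset of $\SB\times C(\Delta)^{\N}$, which by definition places $G$ in $\Sigma^1_2(\SB)$.

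First I would verify that $A$ is coanalytic in $\SB\times C(\Delta)^{\N}$. Since $F=F(C(\Delta))$ is coanalytic in $C(\Delta)^{\N}$, the product $\SB\times F$ is coanalytic in $\SB\times C(\Delta)^{\N}$. The additional restriction ``$x_n\in X$ for every $n\in\N$'' cuts $\SB\times F$ down by a countable intersection of Borel conditions: under the Effros-Borel structure the map $(X,x)\mapsto d(x,X)$ on $\cF(C(\Delta))\times C(\Delta)$ is Borel (for instance via $d(x,X)=\inf_n\|x-S_n(X)\|$ with the selectors of \emph{lemma \ref{lll}}), so $\{(X,x):x\in X\}$ is Borel. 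Thus $A$ is the intersection of a coanalytic set with a Borel set, hence coanalytic.

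Second, the set appearing inside the projection defining $G$, namely
\[
B=\Big\{\big(X,(x_n)\big)\in A:\ \exists\eps\in\Q_+,\ \forall m,\ \forall n_1<\cdots<n_m,\ \forall\ell\in\{1,\dots,m\},\ \big\|\tfrac{1}{m}\big(\sum_{k=1}^{\ell}x_{n_k}-\sum_{k=\ell+1}^{m}x_{n_k}\big)\big\|\geq\eps\Big\},
\]
is coanalytic: beyond membership in $A$, every quantifier ranges over a countable set and the final norm inequality is Borel in $(X,(x_n))$, so the extra predicate cut from $A$ is Borel, and a coanalytic set intersected with a Borel set remains coanalytic. Therefore $G=\pi(B)$ is the projection of a coanalytic set, so $G\in\Sigma^1_2(\SB)$, and taking complements gives $\text{WBS}=G^c\in\Pi^1_2(\SB)$.

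The only real task is the bookkeeping of quantifiers plus the routine verification that the membership relation for $\SB$-codes is Borel with respect to the Effros-Borel structure; both are already part of the paper's toolkit, so no new Banach-space ingredient is needed beyond Beauzamy's characterization already invoked in defining $G$.
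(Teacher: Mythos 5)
Your argument is correct and is essentially the paper's own proof: both express $\text{WBS}$ as $G^c$ where $G$ is the projection of the coanalytic set obtained by intersecting $\SB\times F(C(\Delta))$ with the Borel membership and Cesaro-mean conditions, and then read off $G\in\Sigma^1_2(\SB)$, hence $\text{WBS}\in\Pi^1_2(\SB)$. The extra details you supply (Borelness of $\{(X,x):x\in X\}$ via the selectors, closure of coanalytic sets under intersection with Borel sets) are exactly the routine verifications the paper leaves implicit.
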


\begin{problem}
Is WBS coanalytic? If yes, we had shown that WBS is complete coanalytic.
\end{problem}

\textbf{Remark:} We had just seen that the set of  weakly null subsequences $F(X)\subset X^\N$ of a separable Banach space $X$ is coanalytic in $X^\N$. It is easy to see that $F(X)$ is actually 
Borel if $X^*$ is separable. Indeed, if $\{f_n\}_{n\in\N}$ is dense in $X^*$, we have 

$$F(X)=\bigcap_{n\in\N}   \bigcap_{\eps\in\Q_+}\bigcup_{k\in\N}\bigcap_{m>k}\big\{(x_j)_{j\in\N}\in X^\N|\ |f_n(x_m)|<\eps\big\}.$$

Also, as $\ell_1$ is a Schur space, $F(\ell_1)$ consists of the set of norm null sequences in $\ell_1$, and it is easily seen to be Borel. Which means, $X^*$ does not need to be separable in order to $F(X)$ to be Borel. 

On the other hand, if $\cE$ is the $\ell_1$-basis and $p=2$, we have that the basis standard basis of $\varphi_{\cE,p}(\theta)$ is weakly null if, and only if, $\theta\in \WF$. Therefore, $F(\varphi_{\cE,p}(\NN))$ is complete coanalytic. For the same reason, $F(C(\Delta))$ is complete coanaltic.

\begin{problem}
Under what conditions is $F(X)$ (coanalytic) 
non Borel?
\end{problem}

\section{Complementability of ideals of $\cL(X)$, Part II.}

\subsection{Banach-Saks operators.}

In the same spirit as Sections $3$ and $4$, we now take a look at operator ideals of $\cL(X)$. Let $X$ be a Banach space,  we say $T\in\cL(X)$ is a \emph{Banach-Saks operator} if for 
each bounded sequence $(x_n)_{n\in\N}$ there is a subsequence  $(x_{n_k})_{k\in\N}$ such that the Cesaro mean  $n^{-1}\sum_{k=1}^n T(x_{n_k})$ is norm convergent. We denote the space 
of Banach-Saks operators from $X$ to itself by $\mathcal{BS}(X)$. 

\begin{thm}
The set $\mathcal{BS}=\{X\in\SB|\mathcal{BS}(X)\overset{\perp}{\lemb}\cL(X)\}$ is $\Pi^1_1$-hard. In particular, $\mathcal{BS}$ is  non Borel.
\end{thm}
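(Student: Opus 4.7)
The plan is to exhibit a Borel reduction $\varphi_{\cE,2}:\Tr\to\SB$ from $\WF$ to $\mathcal{BS}$, taking $\cE$ to be a fixed Schauder basis of the universal space $C(\Delta)$. It suffices to check that $\varphi_{\cE,2}^{-1}(\mathcal{BS})=\WF$, since $\WF$ is $\Pi^1_1$-complete and $\varphi_{\cE,2}$ is Borel.

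For $\theta\in\WF$, the transfinite induction in the proof of \emph{theorem \ref{bsnonborel}} is written uniformly in the basic sequence $\tilde{\cE}$: its inductive step only uses the decomposition $\varphi_{\tilde{\cE},2}(\theta)\cong \R\oplus\big(\bigoplus_{\lambda}\varphi_{\tilde{\cE}^*,2}(\theta(\lambda))\big)_{\ell_2}$ together with Partington's stability of $\text{BS}$ under $\ell_2$-sums. Therefore the same induction applied to our $\cE$ gives $\varphi_{\cE,2}(\theta)\in\text{BS}$, so $\mathcal{BS}(\varphi_{\cE,2}(\theta))=\cL(\varphi_{\cE,2}(\theta))$, which is trivially complemented, and hence $\varphi_{\cE,2}(\theta)\in\mathcal{BS}$.

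For $\theta\in\IF$, I first check $C(\Delta)\overset{\perp}{\lemb}\varphi_{\cE,2}(\theta)$. Fix a branch $\beta$ of $\theta$. As in \emph{lemma \ref{arroto}}, $\varphi_{\cE,2}(\beta)\cong\overline{span}\{\cE\}=C(\Delta)$, and the extension-by-zero map is an embedding of $\varphi_{\cE,2}(\beta)$ into $\varphi_{\cE,2}(\theta)$; moreover it is isometric, because on the branch $\beta$ any two nodes are comparable, so only singleton families of incomparable segments contribute to the Baire norm, and every segment of $\beta$ is also a segment of $\theta$. The coordinate restriction to $\beta$ provides the projection: for any segment $J$ of $\beta$, taking $n=1$ and $I_1=J$ in the definition of $\|\cdot\|_{\cE,2,\theta}$ yields $\|\sum_{s\in J}x_s e_{|s|}\|_\cE\leq \|x\|_{\cE,2,\theta}$, so this projection has norm at most $1$.

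To finish, a standard operator-ideal transfer shows that if $Y\overset{\perp}{\lemb}X$ via $i:Y\to X$ and $P:X\to Y$ with $Pi=\mathrm{id}_Y$, and $Q:\cL(X)\to\mathcal{BS}(X)$ is a projection, then $T\mapsto P\,Q(iTP)\,i$ is a projection of $\cL(Y)$ onto $\mathcal{BS}(Y)$ (this uses only that $\mathcal{BS}$ is closed under two-sided composition with bounded operators). It therefore suffices to show $\mathcal{BS}(C(\Delta))$ is not complemented in $\cL(C(\Delta))$. By the Diestel-Seifert theorem, $\cW(C(\Delta))\subseteq\mathcal{BS}(C(\Delta))$; the reverse inclusion $\mathcal{BS}(C(\Delta))\subseteq\cW(C(\Delta))$ is standard (Banach-Saks operators are weakly compact: via Rosenthal's $\ell_1$-theorem, a non-weakly-compact $T$ would produce an $\ell_1$-equivalent sequence in $T(B_X)$, whose Cesaro means cannot converge in norm since $\|\sigma_{2N}-\sigma_N\|$ stays bounded away from $0$). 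Hence $\mathcal{BS}(C(\Delta))=\cW(C(\Delta))$, and this is not complemented in $\cL(C(\Delta))$ by the result of \cite{BBG} already invoked in \emph{theorem \ref{wwwww}}, since $c_0\emb C(\Delta)$. Transferring back yields $\varphi_{\cE,2}(\theta)\notin\mathcal{BS}$, completing the reduction. The main technical point is the identification $\mathcal{BS}(C(\Delta))=\cW(C(\Delta))$, which is exactly where Diestel-Seifert enters and gets combined with the (folklore) inclusion $\mathcal{BS}\subseteq\cW$.
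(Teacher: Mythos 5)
Your proof is correct and follows essentially the same route as the paper: the same reduction $\varphi_{\cE,2}$ with $\cE$ a basis of $C(\Delta)$, the same transfinite induction on the well-founded side, and on the ill-founded side the same transfer of a hypothetical projection of $\cL(X)$ onto $\mathcal{BS}(X)$ down to a complemented copy of $C(\Delta)$, followed by the identification $\mathcal{BS}(C(\Delta))=\cW(C(\Delta))$ and the non-complementation result of \cite{BBG}; you merely make explicit the branch projection and the operator-ideal transfer that the paper compresses into the statement $\varphi_{\cE,2}(\theta)\cong C(\Delta)\oplus Y$. One caveat: your parenthetical justification of $\mathcal{BS}(C(\Delta))\subseteq\cW(C(\Delta))$ is not right as stated, since failure of weak compactness does not produce an $\ell_1$-equivalent sequence in $T(B_X)$ (the identity of $c_0$ is a counterexample); the inclusion itself is true, and in any case the full equivalence for operators on $C(K)$-spaces is exactly what the paper cites from \cite{DS}, so this does not affect the argument.
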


\begin{proof}
 Let $\cE$ be a basis for $C(\Delta)$, and $p=2$. If $\theta\in\WF$, then $\varphi_{\cE,p}(\theta)\in\text{BS}$. 
Hence,  $\mathcal{BS}(\varphi_{\cE,p}(\theta))=\cL(\varphi_{\cE,p}(\theta))$, and we have $\varphi_{\cE,p}(\theta)\in\mathcal{BS}$, for all $\theta\in\WF$. Let's show that the same cannot be true if 
$\theta\in\IF$. 

Say $\theta\in\IF$. Then $\varphi_{\cE,p}(\theta)\cong C(\Delta)\oplus Y$, for some $Y\in\SB$. Let $P_1:C(\Delta)\oplus Y\to C(\Delta)$ be the standard projection. Suppose there exists a bounded 
projection $P:\cL(C(\Delta)\oplus Y)\to\mathcal{BS}(C(\Delta)\oplus Y)$. Define $P_0:\cL(C(\Delta))\to\mathcal{BS}(C(\Delta))$ as, for all $T\in\cL(C(\Delta))$,

$$P_0(T)= P_1(P(\tilde{T}))_{|C(\Delta)},$$

\noindent where $\tilde{T}:C(\Delta)\oplus Y\to C(\Delta)\oplus Y$ is the natural extension, i.e., $\tilde{T}(x,y)=(T(x),0)$, for all $ (x,y)\in C(\Delta)\oplus Y$. Notice that 
$P_0(T)\in\mathcal{BS}(C(\Delta))$, so $P_0$ is well defined. Also, if $T\in\mathcal{BS}(C(\Delta))$, then $\tilde{T}\in\mathcal{BS}(C(\Delta)\oplus Y)$, which implies 
$P(\tilde{T})=\tilde{T}$ (because $P$ is a projection). Therefore, $P_0$ is a projection from $\cL(C(\Delta))$ onto $\mathcal{BS}(C(\Delta))$. Let's observe this gives us a 
contradiction.

It's known that $T:C(\Delta)\to C(\Delta)$ has the Banach-Saks property if, and only if, $T$ is weakly compact  (see \cite{DS}, pag. 112). Hence, 
$\mathcal{BS}(C(\Delta))=\cW(C(\Delta))$ and, as $c_0\emb C(\Delta)$, we have that $\mathcal{BS}(C(\Delta))$ is not complemented in $\cL(C(\Delta))$ (\cite{BBG}). Absurd.
\end{proof}

\begin{problem}
Is $\mathcal{BS}$ coanalytic? If yes, our previous proof would show that $\mathcal{BS}$ is complete coanalytic.
\end{problem}

We had studied three classes of ideals of $\cL(X)$ ($\cU(X)$, $\cW(X)$, and $\mathcal{BS}(X)$) and whether those ideals are complemented in $\cL(X)$ or not. Another natural question would be to study the complexity of pairs $(X,Y)\in\SB^2$ such that their respective ideals ($\cU(X,Y)$, $\cW(X,Y)$, and $\mathcal{BS}(X,Y)$) are complemented in $\cL(X,Y)$. As mentioned in the introduction, this problem had been solved for the ideal of compact operators $\cK(X,Y)$ by D. Puglisi in \cite{Pu}.

Let $\varphi_{\cE,p}:\Tr\to\SB$ be as defined above and define $\varphi(\theta)=(\varphi_{\cE,p}(\theta),\varphi_{\cE,p}(\theta))\in\SB^2$, for all $\theta\in\Tr$. Clearly, we have that $\varphi^{-1}(\{(X,Y)\in\SB^2|\mathcal{BS}(X,Y)\overset{\perp}{\lemb}\cL(X,Y)\})=\WF$. Conclusion:

\begin{thm}
The following sets are $\Pi^1_1$-hard (hence, non Borel) in the product $\SB^2$: $\{(X,Y)\in\SB^2|\allowbreak\mathcal{BS}(X,Y)\overset{\perp}{\lemb}\cL(X,Y)\}$, $\{(X,Y)\in\SB^2|\cU(X,Y)\overset{\perp}{\lemb}\cL(X,Y)\}$, and $\{(X,Y)\in\SB^2|\allowbreak\cW(X,Y)\overset{\perp}{\lemb}\cL(X,Y)\}$.
\end{thm}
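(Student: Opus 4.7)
The plan is to obtain this theorem as a direct corollary of the three corresponding single-space theorems via a diagonal embedding. Define the map $\varphi : \Tr \to \SB^2$ by $\varphi(\theta) = (\varphi_{\cE,p}(\theta), \varphi_{\cE,p}(\theta))$, where the basic sequence $\cE$ and exponent $p$ are chosen (in each of the three cases) to be the same as those used in the single-space proofs: the unit vector basis of $c_0$ with $p = 2$ for $\cU$ and $\cW$, and a basis of $C(\Delta)$ with $p = 2$ for $\mathcal{BS}$. As each coordinate is the Borel function $\varphi_{\cE,p} : \Tr \to \SB$, the map $\varphi$ is Borel into $\SB^2$.

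Next, for any of the three ideals $\mathcal{I} \in \{\cU, \cW, \mathcal{BS}\}$, the tautological identifications $\mathcal{I}(X,X) = \mathcal{I}(X)$ and $\cL(X,X) = \cL(X)$ yield
\[
\varphi^{-1}\bigl(\{(X,Y) \in \SB^2 : \mathcal{I}(X,Y) \overset{\perp}{\lemb} \cL(X,Y)\}\bigr) = \varphi_{\cE,p}^{-1}\bigl(\{X \in \SB : \mathcal{I}(X) \overset{\perp}{\lemb} \cL(X)\}\bigr).
\]
The right-hand set is exactly $\WF$, as established in Theorem \ref{uc}, Theorem \ref{wwwww}, and the Banach-Saks operator theorem above respectively. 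Hence $\varphi$ is a Borel reduction from the $\Pi^1_1$-complete set $\WF \subset \Tr$ to each of the three pair sets, proving $\Pi^1_1$-hardness and therefore non-Borelness.

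There is no real obstacle: all the technical work (the $\ell_p$-saturation and reflexivity from Lemmas \ref{arroto} and \ref{ppoo}, the projection-lifting argument that deduces non-complementability on $\varphi_{\cE,p}(\theta)$ from non-complementability on the embedded copy of $c_0$ or $C(\Delta)$, and the citations to \cite{BBG} and \cite{DS}) has already been carried out in the single-space theorems; the diagonal trick reduces the pair statement to them in one line. Indeed, by the remark immediately preceding the theorem, a single universal choice of $(\cE,p)$ -- namely $\cE$ a basis of $C(\Delta)$ and $p = 2$ -- could be used uniformly across all three ideals, since in the $\WF$ case Lemma \ref{arroto}(ii) forbids $c_0$-subspaces and Lemma \ref{ppoo} gives reflexivity (so all three ideals fill out $\cL$ and are trivially complemented), whereas in the $\IF$ case the resulting space contains a copy of $C(\Delta) \supset c_0$ to which the relevant non-complementability transfers.
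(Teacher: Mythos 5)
Your proposal is correct and follows essentially the same route as the paper: the paper's proof is precisely the diagonal map $\varphi(\theta)=(\varphi_{\cE,p}(\theta),\varphi_{\cE,p}(\theta))$ together with the identification of the pair problem at $(X,X)$ with the single-space problem already solved, yielding $\varphi^{-1}(\cdot)=\WF$. Your closing observation that one uniform choice of $(\cE,p)$ (a basis of $C(\Delta)$ with $p=2$) handles all three ideals is also what the paper implicitly does by reusing the reduction from the Banach--Saks operator theorem.
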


\section{Geometry of Banach spaces, Part II.}

\subsection{Schur Property.}

We say that a Banach space $X$ has the \emph{Schur property} if every weakly convergent sequence of $X$ is norm convergent.

\begin{thm}\label{schurnonborel}
Let $\text{S}=\{X\in\SB|\text{X has the Schur property}\}$. $\text{S}$ is $\Pi^1_1$-hard. In particular, S is non Borel.
\end{thm}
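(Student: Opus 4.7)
The plan is to mimic the strategy used for \text{BS} and \text{ABS}: exhibit a Borel reduction from $\WF$ to $\text{S}$ of the form $\varphi_{\cE,p}:\Tr\to\SB$ for an appropriate choice of basic sequence $\cE$ and exponent $p$. Since we want $\overline{\text{span}}\{\cE\}$ to fail Schur (so that $\theta\in\IF$ forces $\varphi_{\cE,p}(\theta)\notin\text{S}$) and we want the Schur property of the summands to pass to the $\ell_p$-sum in the inductive step, the natural choice is to take $\cE$ to be the standard basis of $\ell_2$ (any basic sequence whose span lacks Schur would do; e.g. the $c_0$-basis) and $p=1$. With this choice I would claim $\varphi_{\cE,1}^{-1}(\text{S})=\WF$, which gives the Borel reduction from the complete coanalytic set $\WF$ to $\text{S}$ and proves $\Pi^1_1$-hardness.

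For the $\IF$ direction, \emph{lemma \ref{arroto}}(i) gives $\ell_2\cong\overline{\text{span}}\{\cE\}\emb\varphi_{\cE,1}(\theta)$ whenever $\theta\in\IF$. Since the Schur property is hereditary (any functional on the subspace extends via Hahn--Banach, so weak convergence in the subspace equals weak convergence in the ambient space), and the standard $\ell_2$-basis is a weakly null sequence of norm-one vectors, $\ell_2$ fails Schur, and therefore so does $\varphi_{\cE,1}(\theta)$.

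For the $\WF$ direction I would prove, by transfinite induction on $o(\theta)$, the stronger statement: for every basic sequence $\tilde{\cE}$ and every $\theta\in\WF$, $\varphi_{\tilde{\cE},1}(\theta)$ has the Schur property. The base case $o(\theta)=1$ is immediate since $\varphi_{\tilde{\cE},1}(\theta)$ is then finite dimensional. For the inductive step, use the decomposition recorded in Section $3$,
$$\varphi_{\tilde{\cE},1}(\theta)\cong\R\oplus\Big(\bigoplus_{\lambda\in\Lambda}\varphi_{\tilde{\cE}^*,1}(\theta(\lambda))\Big)_{\ell_1},$$
with $\Lambda=\{\lambda\in\N|(\lambda)\in\theta\}$. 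By \emph{proposition \ref{p2}} each $\theta(\lambda)$ is well-founded of strictly smaller order, so by the inductive hypothesis applied to the basic sequence $\tilde{\cE}^*$ each summand $\varphi_{\tilde{\cE}^*,1}(\theta(\lambda))$ has the Schur property. Tanbay's theorem (\cite{T}) that countable $\ell_1$-sums of Schur spaces are Schur then yields $\varphi_{\tilde{\cE},1}(\theta)\in\text{S}$, closing the induction.

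The main obstacle is essentially bookkeeping: ensuring the inductive hypothesis is formulated for arbitrary basic sequences $\tilde{\cE}$ (so that stepping from $\tilde{\cE}$ to $\tilde{\cE}^*$ is legitimate), and verifying that the $\ell_1$-decomposition truly matches the norm $\|\cdot\|_{\tilde{\cE},1,\theta}$ (which is a routine unpacking of the definition, since a segment of $\theta$ is either the $\{\emptyset\}$-segment or lies entirely in some $\theta_\lambda$, and incomparable segments distribute over the disjoint union). Once these are in place, Tanbay's $\ell_1$-sum theorem does all the heavy lifting, exactly as Partington's $\ell_2$-sum theorem did in Theorems \ref{bsnonborel} and \ref{qwe}.
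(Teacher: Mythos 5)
Your proof is correct and takes essentially the same approach as the paper, which chooses $\cE$ to be the standard $c_0$-basis with $p=1$ and runs the identical transfinite induction via Tanbay's $\ell_1$-sum theorem. Your substitution of the $\ell_2$-basis for the $c_0$-basis is immaterial, as you yourself note.
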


\begin{proof}
 Let $\cE$ be the standard basis for $c_0$, and $p=1$. As $c_0\emb \varphi_{\cE,p}(\theta)$ if $\theta\in\IF$, we have $\varphi_{\cE,p}(\theta)\not\in\text{S}$, for all $\theta\in\IF$. Mimicking the proof of \emph{theorem 
\ref{bsnonborel}} we have that

$$\varphi_{\cE,p}(\theta)\cong \R\oplus \Big(\bigoplus_{\lambda\in\Lambda}\varphi_{\cE^*,p}(\theta(\lambda))\Big)_{\ell_1},$$

\noindent where $\Lambda=\{\lambda\in\N| (\lambda)\in \theta\}$. Proceeding by transfinite induction and using B. Tanbay's result about 
the stability of the Schur property under $\ell_1$-sums (see \cite{T}, pag. 350), we conclude that $\varphi_{\cE,p}(\theta)\in\text{S}$, for all $\theta\in\WF$.
\end{proof}

Let's try to obtain more information about the complexity of S. For this, notice that a Banach space $X$ does not have the Schur property if, and only if, it has a weakly null sequence $(x_n)_{n\in\N}$ in $S_X$. 

Let $F=F(C(\Delta))$ be defined as in \emph{Section 5}, i.e., $F$ is the set of all weakly null subsequences of $C(\Delta)$. Let $E=F\cap  S_{C(\Delta)}^\N$, so $E$ is coanalytic in $S_{C(\Delta)}^\N$, and define 

$$G=\pi\big(\{(X,(x_n)_{n\in\N})\in\SB\times E|\ \forall n\in\N, \ x_n\in X\}\big),$$

\noindent where $\pi$ denotes the projection into $\SB$. We can easily see that $\text{S}=G^c$. We had just shown that S is the complement of a Borel image of a coanalytic set.

\begin{thm}
$\text{S}\in\Pi^1_2(\SB).$
\end{thm}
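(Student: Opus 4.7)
The plan is to exhibit $\text{S}^c$ as the projection of a coanalytic set, which places it in $\Sigma^1_2(\SB)$, so that $\text{S}$ itself lies in $\Pi^1_2(\SB)$. The starting observation is that $X\in\SB$ fails the Schur property if, and only if, there is a weakly null sequence $(x_n)_{n\in\N}$ contained in the unit sphere $S_X$. So I want to quantify existentially over such sequences inside the Polish space $C(\Delta)^\N$, but with the weakly-null condition imposed \emph{inside} $C(\Delta)$, since that set was already analyzed.

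Concretely, I take $F=F(C(\Delta))\subset C(\Delta)^\N$ as defined in the previous section, which is coanalytic, and set $E=F\cap S_{C(\Delta)}^\N$. Since $S_{C(\Delta)}^\N$ is a closed (in particular Borel) subset of $C(\Delta)^\N$, the set $E$ is coanalytic in $C(\Delta)^\N$. Next, I use that
\[\{(X,y)\in\SB\times C(\Delta)\mid y\in X\}\]
is Borel (a standard fact; see \cite{D}), so the set
\[H=\bigl\{(X,(x_n)_{n\in\N})\in\SB\times E\mid x_n\in X\text{ for every }n\in\N\bigr\}\]
is the intersection of $\SB\times E$ with a Borel set, and hence coanalytic in $\SB\times C(\Delta)^\N$. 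Letting $\pi\colon\SB\times C(\Delta)^\N\to\SB$ be the canonical projection, $G=\pi(H)$ is a continuous (hence Borel) image of a coanalytic set, so $G\in\Sigma^1_2(\SB)$.

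It remains to identify $G$ with $\text{S}^c$. If $X\in G$, pick $(x_n)_{n\in\N}$ with $(X,(x_n))\in H$; then every $x_n$ lies in $S_X$, and every $f\in X^*$ extends by Hahn-Banach to $\tilde{f}\in C(\Delta)^*$, so $f(x_n)=\tilde f(x_n)\to 0$, showing that $(x_n)$ is weakly null in $X$ and hence $X\notin\text{S}$. Conversely, if $X\notin\text{S}$, take a weakly null sequence $(x_n)_{n\in\N}\subset S_X$; for every $g\in C(\Delta)^*$, $g|_X\in X^*$ gives $g(x_n)\to 0$, so $(x_n)$ is weakly null in $C(\Delta)$, i.e.\ lies in $E$ by Lemma~\ref{llkk}, yielding $(X,(x_n))\in H$ and $X\in G$. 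Hence $\text{S}=G^c\in\Pi^1_2(\SB)$.

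There is essentially no combinatorial obstacle here; the only mildly delicate point is making sure the Borel structures line up correctly so that $H$ really is coanalytic, which boils down to using that $\{(X,y)\mid y\in X\}$ is Borel in $\SB\times C(\Delta)$ and that ``$\forall n$'' is a countable intersection. The use of Lemma~\ref{llkk} (together with Hahn-Banach) is what guarantees that weak nullity in $C(\Delta)$ and weak nullity in the subspace $X$ agree for sequences that live in $X$, which is the conceptual content underlying the identification $G=\text{S}^c$.
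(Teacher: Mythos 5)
Your proposal is correct and follows essentially the same route as the paper: the same characterization of failing the Schur property via a weakly null sequence in $S_X$, the same sets $E=F(C(\Delta))\cap S_{C(\Delta)}^\N$ and $G=\pi(H)$, and the same conclusion that $\text{S}=G^c$ is the complement of a Borel image of a coanalytic set. The only difference is that you spell out the Hahn--Banach argument identifying weak nullity in $X$ with weak nullity in $C(\Delta)$, which the paper leaves implicit.
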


\textbf{Remark:} Notice that, if $F=F(C(\Delta))$ is Borel, then we had actually shown that S is coanalytic.

\begin{problem}
Is $\text{S}$ coanalytic?  If yes, our previous proof would show that S is complete coanalytic.
\end{problem}

\subsection{Dunford-Pettis Property.}

A Banach space $X$ is said to have the \emph{Dunford-Pettis property} if every weakly compact operator $T:X\to Y$ from $X$ into another Banach space $Y$ takes weakly compact sets into 
norm-compact sets. In other words, $X$ has the Dunford-Pettis property if every weakly compact operator from $X$ into another Banach space $Y$ is completely continuous. We have the 
following (see \cite{R}, and \cite{Fa}):

\begin{thm}\label{pqppq}
$X^*$ has the Schur property if, and only if, $X$ has the Dunford-Pettis property and $X$ does not contain $\ell_1$.
\end{thm}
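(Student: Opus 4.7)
The plan is to prove the two implications separately; the central technical tools are Gantmacher's theorem, Krein's theorem, Schauder's theorem, Rosenthal's $\ell_1$ theorem, and (for one step) Hagler's embedding theorem.

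For $(\Leftarrow)$: Assume $X$ has the Dunford-Pettis property and $\ell_1\not\emb X$. Given a weakly null $(f_n)\subset X^*$, I would first define $T:X\to c_0$ by $T(x)=(f_n(x))_{n\in\N}$; this is well-defined because weakly null in $X^*$ implies weak-$*$ null (so $f_n(x)\to 0$ for every $x\in X$) and bounded because $\sup_n\|f_n\|<\infty$. The adjoint $T^*:\ell_1\to X^*$ satisfies $T^*(e_n)=f_n$, and $T^*(B_{\ell_1})$ is the closed absolutely convex hull of $\{f_n\}$. Since the weakly convergent set $\{f_n\}\cup\{0\}$ is weakly compact, Krein's theorem yields that $T^*(B_{\ell_1})$ is relatively weakly compact in $X^*$, so $T^*$ and hence $T$ (by Gantmacher) is weakly compact. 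The DPP then gives that $T$ is completely continuous. Since $\ell_1\not\emb X$, Rosenthal's $\ell_1$ theorem says every bounded sequence in $X$ admits a weakly Cauchy subsequence; as completely continuous operators map weakly Cauchy sequences to norm Cauchy ones (apply complete continuity to a difference-of-subsequences trick), $T$ takes every bounded sequence in $X$ to a sequence with a norm-Cauchy subsequence, so $T$ is compact. By Schauder's theorem $T^*$ is compact, and therefore $\{f_n\}=T^*(\{e_n\})$ is relatively norm compact in $X^*$; combined with $f_n\to 0$ weakly, any norm cluster point must equal $0$, and a standard sub-subsequence argument gives $\|f_n\|\to 0$.

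For $(\Rightarrow)$: Assume $X^*$ has the Schur property. To get the DPP, let $T:X\to Y$ be weakly compact; then $T^*:Y^*\to X^*$ is weakly compact by Gantmacher, so $T^*(B_{Y^*})$ is relatively weakly compact in $X^*$. In a Schur space, Eberlein-\v{S}mulian plus the Schur property force relative weak compactness to coincide with relative norm compactness (every sequence has a weakly convergent subsequence, and weak convergence equals norm convergence in Schur spaces), so $T^*$ is compact; Schauder's theorem then gives $T$ compact, hence completely continuous, which shows $X$ has the DPP. For the non-containment of $\ell_1$, I would argue by contradiction: if $\ell_1\emb X$, Hagler's theorem provides $L_1[0,1]\emb X^*$; the Rademacher sequence in $L_1[0,1]$ is weakly null (e.g.\ by Riemann-Lebesgue) and of norm $1$, so its image in $X^*$ (weakly null in the subspace hence weakly null in $X^*$, and norm-bounded below by the embedding constant) gives a weakly null non-norm-null sequence in $X^*$, contradicting the Schur property.

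The main obstacle is this last step: the argument leans on Hagler's theorem to manufacture a weakly null non-norm-null sequence in $X^*$ out of a copy of $\ell_1$ in $X$. Any proof avoiding Hagler would have to produce such a sequence by hand, and this is delicate because $\ell_1$ need not be complemented in $X$ and a naive Hahn-Banach extension of the biorthogonal functionals of an $\ell_1$-basis inside $X$ need not be weak-$*$ null, let alone weakly null.
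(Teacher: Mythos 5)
Your proof is correct, but note that the paper does not actually prove this statement: Theorem \ref{pqppq} is imported with a citation to Rosenthal \cite{R} and Fakhoury \cite{Fa} and used as a black box in the proof that DP is $\Pi^1_1$-hard. So there is no internal argument to compare against; what you have done is supply a self-contained proof of the cited result. Your argument is sound in both directions. The $(\Leftarrow)$ direction (weakly null $(f_n)$ in $X^*$ gives a weakly compact $T:X\to c_0$ via Krein and Gantmacher, DPP makes $T$ completely continuous, Rosenthal's $\ell_1$ theorem upgrades this to compactness, and Schauder pushes compactness back to $T^*$, forcing $\|f_n\|\to 0$) is essentially the classical proof one finds in the literature this paper cites. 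In the $(\Rightarrow)$ direction, your Gantmacher--Schauder argument in fact shows the stronger statement that every weakly compact operator out of $X$ is compact, which certainly yields the DPP; a marginally softer alternative is to observe that Schur spaces have the DPP and that the DPP passes from $X^*$ to $X$. For the non-containment of $\ell_1$ you are right that some nontrivial input is unavoidable, and invoking Hagler's theorem ($\ell_1\emb X$ iff $L_1[0,1]\emb X^*$) together with the weakly null, normalized Rademacher sequence is a legitimate and standard way to do it; your closing caveat about why a naive Hahn--Banach extension of the biorthogonal functionals does not work is well taken. Two cosmetic points: $T^*(B_{\ell_1})$ need only be contained in the closed absolutely convex hull of $\{f_n\}\cup\{0\}$ (which is all Krein's theorem requires), and the sub-subsequence argument at the end deserves the one-line justification that a norm cluster point of a weakly null sequence must be $0$.
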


\begin{thm}
Let $\text{DP}=\{X\in\SB|\text{X has the Dunford-Pettis property}\}$. $\text{DP}$ is $\Pi^1_1$-hard. In particular, DP is non Borel.
\end{thm}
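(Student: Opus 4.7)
The plan is to produce a Borel reduction from $\WF$ to $\text{DP}$, in the spirit of Theorem \ref{schurnonborel}, but now using Theorem \ref{pqppq} to transport Schurness of the \emph{dual} into the DP property of the primal. Let $\cE=(e_n)_{n\in\N}$ be the standard basis of $\ell_2$, viewed as a basic sequence inside $C(\Delta)$, and consider the Borel function $\psi=\varphi_{\cE,0}\colon\Tr\to\SB$. I claim $\psi^{-1}(\text{DP})=\WF$, which shows that $\text{DP}$ is $\Pi^1_1$-hard.

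Suppose first that $\theta\in\WF$. I will verify both hypotheses of Theorem \ref{pqppq} for $\psi(\theta)$. The second half of Lemma \ref{arroto} shows $\psi(\theta)$ is $c_0$-saturated, so in particular $\ell_1\not\emb\psi(\theta)$. For the Schur property of $\psi(\theta)^*$, I proceed by transfinite induction on $o(\theta)$. The base case $o(\theta)=1$ is trivial, since then $\psi(\theta)\cong\R$. For the inductive step, the same analysis of segments of $\theta$ as in Section 3 yields, with $\Lambda=\{\lambda\in\N : (\lambda)\in\theta\}$,
$$\psi(\theta) \;\cong\; \R \oplus \Big(\bigoplus_{\lambda\in\Lambda}\varphi_{\cE^*,0}(\theta(\lambda))\Big)_{c_0},$$
and dualizing a $c_0$-sum gives
$$\psi(\theta)^* \;\cong\; \R \oplus \Big(\bigoplus_{\lambda\in\Lambda}\varphi_{\cE^*,0}(\theta(\lambda))^*\Big)_{\ell_1}.$$
By Proposition \ref{p2}, $o(\theta(\lambda))<o(\theta)$ for every $\lambda\in\Lambda$, so the inductive hypothesis gives each summand the Schur property, and Tanbay's stability of Schur under $\ell_1$-sums (\cite{T}) closes the induction. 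Theorem \ref{pqppq} then gives $\psi(\theta)\in\text{DP}$.

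Suppose now $\theta\in\IF$ and fix a branch $\beta$ of $\theta$. Since $\beta$ is itself a segment of $\theta$ and distinct elements of $\beta$ have distinct lengths, a direct computation shows that the closed span $F=\overline{\text{span}}\{\delta_s : s\in\beta\}\subseteq\psi(\theta)$ is isometric to $\overline{\text{span}}\{\cE\}=\ell_2$; indeed, for $y$ supported on $\beta$ the supremum defining $\|y\|_{\cE,0,\theta}$ is attained at the segment $I=\beta$ and equals $(\sum_{s\in\beta}|y(s)|^2)^{1/2}$. Moreover, the restriction map $P\colon\psi(\theta)\to F$ given by $P(x)=\sum_{s\in\beta}x(s)\delta_s$ is a norm-one projection onto $F$, again because $\beta$ itself is a segment appearing in the sup defining $\|\cdot\|_{\cE,0,\theta}$. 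Since $\ell_2$ fails the Dunford--Pettis property, and DP passes to complemented subspaces (compose a weakly compact operator $T\colon F\to Z$ with $P$ and use DP of the ambient space), we conclude $\psi(\theta)\notin\text{DP}$.

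The main obstacle is the choice of parameters. Using $p=0$ is essential so that the primal decomposition is a $c_0$-sum and its dual is the $\ell_1$-sum to which Tanbay's result applies; and taking $\cE$ to be the $\ell_2$-basis is essential because $\ell_2$ fails DP (which rules out DP for $\theta\in\IF$ via a complemented copy along a branch) while $\ell_1\not\emb\ell_2$ (needed, via $c_0$-saturation from Lemma \ref{arroto}, for Theorem \ref{pqppq} to apply when $\theta\in\WF$). Once these choices are fixed, $\psi\colon\Tr\to\SB$ is the desired Borel reduction from the complete coanalytic set $\WF$ to $\text{DP}$, proving that $\text{DP}$ is $\Pi^1_1$-hard.
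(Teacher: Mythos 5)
Your proposal is correct and follows essentially the same route as the paper: the reduction $\varphi_{\cE,0}$ with $\cE$ the standard $\ell_2$-basis, Theorem \ref{pqppq} plus Tanbay's $\ell_1$-sum stability of the Schur property (via the dual of the $c_0$-sum decomposition) for $\theta\in\WF$, and a complemented copy of the reflexive space $\ell_2$ to kill DP for $\theta\in\IF$. The only difference is cosmetic: where the paper simply asserts $\varphi_{\cE,0}(\theta)\cong\ell_2\oplus Y$ and exhibits the weakly compact, non-completely-continuous operator $(x,y)\mapsto(x,0)$, you verify explicitly that restriction to a branch is a norm-one projection onto an isometric copy of $\ell_2$ and invoke the passage of DP to complemented subspaces, which fills in a detail the paper leaves implicit.
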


\begin{proof}
 Let $\cE$ be the standard basis for $\ell_2$, and $p=0$. We show that $\varphi_{\cE,0}^{-1}(\text{DP})=\WF$.

 If $\theta\in\IF$ we have $\varphi_{\cE,0}(\theta)\cong \ell_2\oplus Y$, for some Banach space $Y$. Hence, as 
$\ell_2$ is reflexive, it is clear that $T(x,y)=(x,0)$ is a weakly compact operator from $\ell_2\oplus Y$ to itself which is not completely continuous. Therefore, 
$\varphi_{\cE,0}(\theta)\not\in\text{DP}$, for all $\theta\in\IF$.  

Say $\theta\in\WF$. By \emph{theorem \ref{pqppq}}, in order to show that $\varphi_{\cE,0}(\theta)\in\text{DP}$ it is 
enough to show that $\varphi_{\cE,0}(\theta)^*$ has the Schur property. With the same notation as in the proofs of the previous theorems, we have

$$\varphi_{\cE,0}(\theta)\cong \R\oplus \Big(\bigoplus_{\lambda\in\Lambda}\varphi_{\cE^*,0}(\theta(\lambda))\Big)_{c_0},$$

\noindent where $\Lambda=\{\lambda\in\N| (\lambda)\in \theta\}$. Hence, we have

$$\varphi_{\cE,0}(\theta)^*\cong \R\oplus \Big(\bigoplus_{\lambda\in\Lambda} \varphi_{\cE^*,0}(\theta(\lambda))^*\Big)_{\ell_1}.$$

Therefore, if we proceed by transfinite induction and use the stability of the Schur property under $\ell_1$-sums (exactly as we did in the proof of \emph{theorem 
\ref{schurnonborel}}), we will be done.
\end{proof}

\begin{problem}
Is $\text{DP}$ coanalytic?  If yes, our previous proof would show that DP is complete coanalytic.
\end{problem}

An operator $T:X\to Y$ is said to be \emph{completely continuous} if $T$ maps weakly compact sets into norm-compact sets. For a given $X\in\SB$, let $\mathcal{CC}(X)$ be the set of 
completely continuous operators from $X$ to itself.

\begin{problem}
Let $\mathcal{CC}=\{X\in\SB|\mathcal{CC}(X)\overset{\perp}{\lemb}\cL(X)\}$. Is $\mathcal{CC}$ non Borel? If yes, is it coanalytic?
\end{problem}

\subsection{Complete Continuous Property.}

A Banach space $X$ is said to have the \emph{complete continuous property} (or just to have the \textbf{CCP}) if every operator from $L_1[0,1]$ to $X$ is completely continuous (i.e. if 
it carries weakly compact sets into norm-compact sets). It is well known that $L_1[0,1]$ does not have this property.

\begin{thm}\label{ccpnonborel}
Let $\text{CCP}=\{X\in\SB|X\text{ has the CCP}\}$. CCP is $\Pi^1_1$-hard. In particular, CCP is non Borel.
\end{thm}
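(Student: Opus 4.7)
The plan is to mirror the proofs of Theorems \ref{bsnonborel}, \ref{qwe}, and \ref{schurnonborel} by producing a Borel reduction from $\WF$ to $\text{CCP}$ through the $\ell_p$-Baire sum machinery. Specifically, I take $\cE=(e_n)_{n\in\N}$ to be the Haar basis of $L_1[0,1]$, so that $\overline{\text{span}}\{\cE\}=L_1[0,1]$, and $p=2$. Then $\varphi_{\cE,2}:\Tr\to\SB$ is a Borel function, and the goal is to verify that $\varphi_{\cE,2}^{-1}(\text{CCP})=\WF$. Since $\WF$ is $\Pi^1_1$-complete in $\Tr$, this reduction immediately yields that $\text{CCP}$ is $\Pi^1_1$-hard in $\SB$.

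For $\theta\in\IF$, Lemma \ref{arroto}(i) provides an isometric embedding $L_1[0,1]\cong\overline{\text{span}}\{\cE\}\lemb\varphi_{\cE,2}(\theta)$. I will then invoke heredity of CCP: if $j:X\lemb Y$ is an isomorphic embedding and $Y\in\text{CCP}$, then for any operator $T:L_1[0,1]\to X$ the composition $j\circ T$ is completely continuous, and because $j$ is an isomorphism onto its image (so norm-compactness transfers), $T$ itself is completely continuous. Hence it suffices to show that $L_1[0,1]\notin\text{CCP}$; this is immediate from the Rademacher sequence $(r_n)$, which is weakly null in $L_1[0,1]$ but satisfies $\|r_n\|_1=1$, so the identity $\text{Id}:L_1[0,1]\to L_1[0,1]$ fails to be completely continuous.

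For $\theta\in\WF$, Lemma \ref{ppoo} (applied with $p=2\in(1,\infty)$) yields that $\varphi_{\cE,2}(\theta)$ is reflexive. Since reflexive spaces enjoy the Radon-Nikodym property, and RNP implies CCP (every operator from $L_1[0,1]$ into an RNP space is Dunford-Pettis), we conclude $\varphi_{\cE,2}(\theta)\in\text{CCP}$. This completes the verification that $\varphi_{\cE,2}^{-1}(\text{CCP})=\WF$.

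I do not expect a real obstacle: the structural work is already carried by Lemmas \ref{arroto} and \ref{ppoo}, and the only external facts needed are that $L_1[0,1]\notin\text{CCP}$, that $\text{CCP}$ is hereditary under isomorphic embeddings, and that reflexive $\Rightarrow$ RNP $\Rightarrow$ CCP, all of which are classical. The sole design choice is the basic sequence $\cE$, and the Haar basis of $L_1[0,1]$ is tailor-made because it gives the needed failure of CCP on branches while leaving the reflexivity argument on well-founded trees intact.
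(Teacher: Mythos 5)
Your proposal is correct and follows essentially the same route as the paper: the paper also takes $\cE$ to be a basis of $L_1[0,1]$ with $p=2$, uses Lemma \ref{arroto} to embed $L_1[0,1]$ (which fails CCP) into $\varphi_{\cE,2}(\theta)$ for $\theta\in\IF$, and uses Lemma \ref{ppoo} plus the chain reflexive $\Rightarrow$ RNP $\Rightarrow$ CCP for $\theta\in\WF$. The only differences are cosmetic: you spell out the heredity of CCP and the Rademacher witness for $L_1[0,1]\notin\text{CCP}$, which the paper leaves as well-known facts.
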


\begin{proof}
Let $\cE$ be a basis of $L_1[0,1]$,  and $p=2$.

By \emph{lemma \ref{ppoo}}, if $\theta\in\WF$, then $\varphi(\theta)$ is 
reflexive, which implies $\varphi(\theta)=\varphi(\theta)^{**}$ is a separable dual. As  separable duals have the Radon-Nikodym property (Dunford-Pettis theorem, see \cite{DiU}) and \textbf{RNP} implies 
\textbf{CCP} (see \cite{G}, pag. $61$), we conclude that $\varphi(\theta)\in\text{CCP}$, for all $\theta\in\WF$. 

On the other hand, if $\theta\in\IF$ we have that $L_1[0,1]\emb\varphi_{\cE,p}(\theta)$. As $L_1[0,1]$ does not have \textbf{CCP}, this clearly implies 
$\varphi_{\cE,p}(\theta)\not\in\text{CCP}$, for all $\theta \in\IF$.
\end{proof}

M. Girardi had shown (see \cite{G}, pag. 70) that a Banach space $X$ has the \textbf{CCP} if, and only if, $X$ has no bounded $\delta$-Rademacher bush on it (the original terminology used by 
M. Girardi was $\delta$-Rademacher \emph{tree}, but in order to be coherent with our terminology we chose to call it a bush). A 
\emph{$\delta$-Rademacher bush} on $X$ is a set of the form $\{x_k^l\in X|k\in\N,\ l\in\{1,...,2^k\}\}$ satisfying

\begin{enumerate}[(i)]
\item $\ds{x_{k-1}^{l}=\frac{x_k^{2l-1}+x_k^{2l}}{2}}$, for all $k\in\N$, and $l\in\{1,...,2^{k-1}\}$.
\item $\big\| \sum_{l=1}^{2^{k-1}}(x_k^{2l-1}-x_k^{2l})\big\|>2^k\delta$, for all $k\in\N$.\\
\end{enumerate}

\begin{thm}\label{girardi}
A Banach space $X$ has the \textbf{CCP} if, and only if, there exists no bounded $\delta$-Rademacher bush on $X$.
\end{thm}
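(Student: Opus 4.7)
The plan is to use the natural correspondence between bounded $X$-valued dyadic martingales and bounded operators $L_1[0,1]\to X$, under which the Rademacher inequality (ii) on a bush corresponds exactly to the failure of complete continuity on the Rademacher system. Throughout, write $I_k^l=[(l-1)/2^k,\, l/2^k]$ for the level-$k$ dyadic intervals, $\mathcal{F}_k$ for the $\sigma$-algebra they generate, and $r_k=\sum_{l=1}^{2^{k-1}}(\chi_{I_k^{2l-1}}-\chi_{I_k^{2l}})$ for the $k$-th Rademacher function, which is weakly null in $L_1[0,1]$ with $\|r_k\|_{L_1}=1$.

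For the implication ``bush implies failure of the \textbf{CCP}'', given a bounded $\delta$-Rademacher bush $\{x_k^l\}$ with $M:=\sup_{k,l}\|x_k^l\|$, I would define $T$ on dyadic step functions by $T(\chi_{I_k^l}):=2^{-k}x_k^l$. Condition (i) is precisely the compatibility requirement across dyadic refinements, so $T$ is well defined, and the triangle inequality yields $\|T(g)\|\leq M\|g\|_{L_1}$ on step functions, hence $T$ extends to a bounded operator $L_1[0,1]\to X$. A direct computation gives $T(r_k)=2^{-k}\sum_{l=1}^{2^{k-1}}(x_k^{2l-1}-x_k^{2l})$, so (ii) forces $\|T(r_k)\|>\delta$ for every $k$. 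Since $(r_k)$ is weakly null in $L_1[0,1]$, $T$ is not completely continuous, so $X$ fails the \textbf{CCP}.

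For the converse, assume $X$ fails the \textbf{CCP} and fix $T:L_1[0,1]\to X$ that is not completely continuous. The naive attempt $x_k^l:=T(2^k\chi_{I_k^l})$ immediately gives (i) from linearity and $\chi_{I_{k-1}^l}=\chi_{I_k^{2l-1}}+\chi_{I_k^{2l}}$, and boundedness by $\|T\|$, but then (ii) reduces to the uniform lower bound $\|T(r_k)\|>\delta$ for all $k$, which is strictly stronger than the mere existence of some weakly null witness. This is the main obstacle. The fix I would use is a recursive martingale-perturbation construction: starting from $f_0^1:=\chi_{[0,1]}$, at each already constructed node $f_k^l\in L_1$ one picks a perturbation $g_l\in L_1$ supported in $\mathrm{supp}(f_k^l)$ of controlled $L_1$-norm, and sets $f_{k+1}^{2l-1}:=f_k^l+g_l$ and $f_{k+1}^{2l}:=f_k^l-g_l$. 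The averaging condition is then automatic for $x_k^l:=T(f_k^l)$, and (ii) at level $k+1$ becomes $\|T(\sum_l g_l)\|>2^k\delta$. Since the supports of the $g_l$'s can be made pairwise disjoint (inherited from the disjointness of the supports of the $f_k^l$'s), the resulting sequence $(\sum_l g_l)_k$ can be arranged to be weakly null in $L_1[0,1]$; failure of complete continuity of $T$, together with a Kadec--Pe{\l}czy\'{n}ski/Rosenthal dichotomy applied locally on each $\mathrm{supp}(f_k^l)$, then allows one to choose the $g_l$'s so that the required lower bound holds with a $\delta$ independent of $k$. Arranging this choice simultaneously at every node, while keeping the bush bounded in $X$, is the technical heart of the argument and the hardest step.
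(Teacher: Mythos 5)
There is nothing in the paper to compare your attempt against: Theorem \ref{girardi} is quoted from Girardi's paper \cite{G} and the author gives no proof of it. Judged on its own terms, your forward implication (existence of a bounded $\delta$-Rademacher bush implies failure of the \textbf{CCP}) is correct and complete: condition (i) makes $T(\chi_{I_k^l})=2^{-k}x_k^l$ well defined on dyadic step functions, boundedness of the bush gives $\|T(g)\|\leq M\|g\|_{L_1}$, and condition (ii) gives $\|T(r_k)\|>\delta$ along the weakly null normalized Rademacher sequence, so $\{T(r_k)\}\cup\{0\}$ is the image of a weakly compact set with no norm-convergent subsequence of $(T(r_k))$.

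The converse, however, has a genuine gap, which you have in effect conceded (``the technical heart of the argument and the hardest step''). Two concrete problems. First, the uniformity of $\delta$: non-complete-continuity of $T$ hands you a single weakly null sequence $(f_n)$ with $\inf_n\|Tf_n\|>0$, but your recursion needs, at every node, a perturbation $g_l$ living on the piece of $[0,1]$ assigned to that node with $\|T(\sum_l g_l)\|>2^k\delta$ for one fixed $\delta$ at every level $k$, while the perturbations accumulated along each branch must keep $\|f_k^l\|_{L_1}$, hence the bush, bounded. These two demands pull against each other, and invoking a ``Kadec--Pe{\l}czy\'nski/Rosenthal dichotomy applied locally'' does not resolve the tension: the entire content of the hard direction of Girardi's theorem is that the failure of complete continuity can be localized to a full family of sets of positive measure with a \emph{uniform} constant (this is done in \cite{G} via an oscillation/exhaustion argument), and that step is exactly what is missing here. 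Second, an internal inconsistency: with $f_{k+1}^{2l-1}=f_k^l+g_l$ and $f_{k+1}^{2l}=f_k^l-g_l$, the two children generically have the same support as the parent, so the level-$k$ functions do not have pairwise disjoint supports unless $g_l=\pm f_k^l$ on complementary halves of $\mathrm{supp}(f_k^l)$ --- which collapses the scheme back to the rigid dyadic model you already rejected. Hence the disjointness you invoke to make $(\sum_l g_l)_k$ weakly null is not available; in the general martingale model that weak nullity must come from the $g_l$ being conditionally mean-zero martingale differences, and that too would need to be set up and proved. As written, the converse is a plan rather than a proof.
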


\begin{thm}
CCP is coanalytic. Moreover, CCP is complete coanalytic.
\end{thm}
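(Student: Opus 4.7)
The plan is to invoke Theorem \ref{girardi} and express the complement $\text{CCP}^c$ as the projection of a Borel subset of $\SB\times C(\Delta)^\N$. This will make $\text{CCP}^c$ analytic and hence $\text{CCP}$ coanalytic; combined with the $\Pi^1_1$-hardness already obtained in Theorem \ref{ccpnonborel}, this yields completeness.

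Concretely, I would encode a candidate bounded $\delta$-Rademacher bush $\{x_k^l:k\in\N,\ 1\leq l\leq 2^k\}$ as a single element $(y_{k,l})$ of the Polish space $C(\Delta)^\N$ via a fixed bijection of $\N$ with $\{(k,l):k\in\N,\ 1\leq l\leq 2^k\}$. Define
$$B=\Big\{\big(X,(y_{k,l})\big)\in\SB\times C(\Delta)^\N :\ \exists \delta,r\in\Q_+ \text{ such that (a)--(d) below hold}\Big\},$$
where (a) $y_{k,l}\in X$ for all $k,l$; (b) $\|y_{k,l}\|\leq r$ for all $k,l$; (c) $2y_{k-1,l}=y_{k,2l-1}+y_{k,2l}$ for all admissible $k,l$; and (d) $\big\|\sum_{l=1}^{2^{k-1}}(y_{k,2l-1}-y_{k,2l})\big\|>2^k\delta$ for every $k\in\N$. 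Conditions (b), (c), (d) are visibly Borel (in fact closed or open) in the $C(\Delta)^\N$-coordinate, and the countable existential over $\delta,r\in\Q_+$ preserves Borelness. For (a), recall from Lemma \ref{lll} that there is a Borel sequence $\{S_n\}$ with $\{S_n(X)\}_{n\in\N}$ dense in $X$; since $X$ is closed, $y\in X$ is equivalent to
$$\forall m\in\N\ \exists n\in\N\quad \|y-S_n(X)\|_{C(\Delta)}<1/m,$$
a Borel condition on $(X,y)\in\SB\times C(\Delta)$. Hence the set of pairs satisfying (a) is Borel, and so is $B$.

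By Theorem \ref{girardi}, $X\notin\text{CCP}$ if and only if there exist $\delta>0$ and a bounded $\delta$-Rademacher bush in $X$; equivalently, if and only if $X\in\pi_{\SB}(B)$, where $\pi_{\SB}$ is the projection along the $C(\Delta)^\N$ factor. Since $B$ is Borel and projection along a Polish factor is analytic, $\text{CCP}^c$ is analytic, so $\text{CCP}$ is coanalytic. The only nontrivial bookkeeping step, which I would flag as the main (though routine) obstacle, is the verification of clause (a); the encoding through $\{S_n\}$ and the closedness of $X$ take care of it. Combining this with Theorem \ref{ccpnonborel}, which shows $\text{CCP}$ is $\Pi^1_1$-hard, we conclude that $\text{CCP}$ is complete coanalytic.
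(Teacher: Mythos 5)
Your proof is correct and rests on the same two ingredients as the paper's: Girardi's characterization (Theorem \ref{girardi}) for the upper bound, and Theorem \ref{ccpnonborel} for the hardness. The difference is only in how the existential quantifier over candidate bushes is implemented. The paper quantifies over index sequences $(n_k^l)\in\N^\N$ and draws the nodes of the bush from the dense sequence $\{S_n(X)\}$, writing CCP directly as a $\Pi^1_1$ formula; this leans on the standing assumption that $\{S_n\}$ is closed under rational linear combinations, so that the exact midpoint identity $x_{k-1}^l=\frac{1}{2}(x_k^{2l-1}+x_k^{2l})$ can be realized inside the dense set, and it implicitly uses that a bounded $\delta$-Rademacher bush can be replaced by one whose nodes all lie in $\{S_n(X)\}$ (a perturbation step the paper does not spell out). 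You instead quantify over arbitrary sequences in $C(\Delta)^\N$ and exhibit $\text{CCP}^c$ as the projection of a Borel set $B\subset\SB\times C(\Delta)^\N$; the price is the verification that the membership condition $y\in X$ is Borel, which you handle correctly via the selectors and the closedness of $X$, and the payoff is that no approximation or rational-closure argument is needed, since the bush witnesses range over all of $C(\Delta)$. Either route shows $\text{CCP}^c$ is $\Sigma^1_1$, hence CCP is coanalytic, and completeness then follows from Theorem \ref{ccpnonborel} exactly as you say.
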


\begin{proof}
We use M. Girardi's  characterization of the complete continuous property to show that CCP is coanalytic. To simplify the notation below we denote by $(n_k^l)_{k\in\N,l\in\{1,...,2^k\}}\in\N^\N$ the sequence $n_1^1,n_1^2,n_2^1,...,n_2^4,n_3^1,...$, etc. 

\begin{align*}
X\in CCP\Leftrightarrow &\forall (n_k^l)\in\N^\N\ \ \Big( \exists M\in \N,\ \forall k\in\N,\ \forall l\in\{1,...,2^{k}\},\ \|S_{n^l_k}(X)\|<M\Big)\\
&\wedge\left(\ds{S_{n_{k-1}^l}(X)=\frac{S_{n_{k}^{2l-1}}(X)+S_{n_{k}^{2l}}(X)}{2}}, \ \forall k\in\N,\ \forall  l\in\{1,...,2^{k-1}\}\right)\\
&\Rightarrow \left(\forall\delta\in\Q_+,\ \exists k\in\N,\ \big\| \sum_{l=1}^{2^{k-1}}(S_{n_k^{2l-1}}(X)-S_{n_k^{2l}}(X))\big\|\leq2^k\delta\right).
\end{align*} 

The statement above holds because we assume $\{S_n\}_{n\in\N}$ to be closed under rational linear combinations.
\end{proof}

\subsection{Analytic Radon-Nikodym property.}

It was shown in \cite{Bo} that $RNP=\{X\in\SB|X\text{ has the Radon Nikodym property}\}$ is complete coanalytic. Here we deal with the analytic Radon Nikodym property and find a lower bound for its complexity.

A complex Banach space $X$ has the \emph{analytic Radon-Nikodym property} if every $X$-valued measure of bounded variation, defined on the Borel subsets of 
$\textbf{T}=\{z\in\mathbb{C}| |z|=1\}$, whose negative Fourier coefficients vanish, has a Radon-Nikodym derivative with respect to the Lebesgue measure on $\textbf{T}$. 

So far, we had only being working with real Banach spaces. But, as $C_\mathbb{C}(\Delta)$ (the space of the complexed valued continuous functions endowed with the supremum norm) is 
universal for the class of serapable complex Banach spaces, we can code the class of separable complex Banach spaces in an analogous way. Precisely, we let $\SB_{\mathbb{C}}=\{X\subset 
C_{\mathbb{C}}(\Delta)|X\text{ is a closed linear subspace}\}$. Analogously as we had before, $\SB_\mathbb{C}$ endowed with the Effros-Borel structure is a Polish space and it makes 
sense to wonder whether classes of separable complex Banach spaces with specific properties are Borel or not in this coding. With this in mind we, have:

\begin{thm}\label{label}
Let $\text{a-RNP}=\{X\in\SB_\mathbb{C}| X\text{ has the analytic Radon-Nikodym property.}\}$. a-RNP is $\Pi^1_1$-hard. In particular, a-RNP is non Borel.
\end{thm}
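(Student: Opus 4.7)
The plan is to mimic the Borel reduction strategy employed throughout the paper, adapted to the complex setting. We construct the complex analogue of the $\ell_p$-Baire sum map $\varphi_{\cE,p}:\Tr\to\SB_{\mathbb{C}}$ (the definition of Section 3 carries over verbatim, replacing $\R$ by $\mathbb{C}$ and choosing a complex basic sequence) and exhibit a basic sequence $\cE$ and an exponent $p$ for which $\varphi_{\cE,p}^{-1}(\text{a-RNP})=\WF$. Since $\WF$ is $\Pi^1_1$-complete in $\Tr$, such a Borel reduction establishes that a-RNP is $\Pi^1_1$-hard in $\SB_{\mathbb{C}}$.

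The natural choice is to take $\cE$ to be the standard basis of the complex space $c_0$ and $p=2$. For the inclusion $\WF\subset\varphi_{\cE,p}^{-1}(\text{a-RNP})$, observe that \emph{Lemma \ref{ppoo}} (whose proof is unchanged over $\mathbb{C}$) gives that $\varphi_{\cE,p}(\theta)$ is reflexive whenever $\theta\in\WF$. Every reflexive space has the Radon-Nikodym property, and \textbf{RNP} trivially implies a-RNP since the class of measures tested for a-RNP is a subclass of the measures tested for \textbf{RNP}; hence $\varphi_{\cE,p}(\theta)\in\text{a-RNP}$. For the reverse inclusion, if $\theta\in\IF$ then \emph{Lemma \ref{arroto}}(i) produces an isomorphic copy of $c_0$ inside $\varphi_{\cE,p}(\theta)$. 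Because a-RNP is a hereditary property (the Radon-Nikodym derivative of an $X$-valued measure whose values lie in a closed subspace $Y\subset X$ must itself be $Y$-valued almost everywhere, by a standard separability and closedness argument), it suffices to verify that $c_0$ itself fails a-RNP.

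That $c_0$ fails a-RNP is classical and can be witnessed by the bounded analytic $c_0$-valued function $f(z)=(z^n)_{n\geq 1}$ on the open unit disk: one has $\sup_{|z|<1}\|f(z)\|_{c_0}=1$, yet the coordinatewise radial limits $(e^{in\theta})_{n\geq 1}$ do not lie in $c_0$, so $f$ admits no radial limits in $c_0$, obstructing a-RNP via Bukhvalov's characterization of a-RNP in terms of radial limits of bounded analytic vector-valued functions.

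The main technical point beyond this is to verify that Borel measurability of $\varphi_{\cE,p}$ and the reflexivity statement \emph{Lemma \ref{ppoo}} transfer to the complex category (both arguments are identical to their real counterparts, since they only use norm estimates on finitely supported vectors and the structural decomposition $\varphi_{\cE,p}(\theta)\cong\mathbb{C}\oplus(\bigoplus_{\lambda\in\Lambda}\varphi_{\cE^*,p}(\theta(\lambda)))_{\ell_p}$), and to justify heredity of a-RNP as above. Modulo these routine points, the theorem follows exactly by the template used in \emph{Theorems \ref{bsnonborel}}, \ref{qwe}, \ref{schurnonborel}, and \ref{ccpnonborel}.
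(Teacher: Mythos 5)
Your proposal is correct and follows essentially the same route as the paper: the same reduction $\varphi_{\cE,2}$ with $\cE$ the $c_0$-basis, reflexivity via \emph{Lemma \ref{ppoo}} giving \textbf{RNP} and hence a-RNP on $\WF$, and the embedded copy of $c_0$ killing a-RNP on $\IF$. The only difference is that the paper simply cites the theorem that a-RNP excludes copies of $c_0$ (from \cite{HN}), whereas you supply its proof via heredity of a-RNP and the explicit function $f(z)=(z^n)_{n\geq 1}$; both are fine.
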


For the proof of this result two well known theorems will do the work (see \cite{HN}).

\begin{thm}\label{qaqpo}
If $X$ has the Radon-Nikodym property, then $X$ has the analytic Radon-Nikodym property.
\end{thm}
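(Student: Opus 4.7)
The plan is to show that the analytic hypothesis on $\mu$, combined with the classical scalar F.\ and M.\ Riesz theorem, forces $\mu$ to be absolutely continuous with respect to normalized Lebesgue measure $m$ on $\mathbf{T}$; once this is established, the Radon--Nikodym property of $X$ immediately produces the required density. So the real content is a vector-valued F.\ and M.\ Riesz step, and then an invocation of RNP.

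First I would scalarize. Let $\mu$ be an $X$-valued Borel measure on $\mathbf{T}$ of bounded variation with $\hat{\mu}(n)=0$ for all $n<0$. For each $x^*\in X^*$ the complex measure $x^*\circ\mu$ has total variation bounded by $\|x^*\|\cdot|\mu|(\mathbf{T})$, and its Fourier coefficients are $\widehat{x^*\circ\mu}(n)=x^*(\hat\mu(n))$, which vanish for $n<0$. The classical (scalar) F.\ and M.\ Riesz theorem then says that $x^*\circ\mu$ is absolutely continuous with respect to $m$.

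Next I would upgrade this weak absolute continuity to $|\mu|\ll m$. Let $N\subset\mathbf{T}$ be a Borel set with $m(N)=0$. For every Borel $A\subset N$ and every $x^*\in X^*$, the previous step gives $x^*(\mu(A))=|x^*\circ\mu|(A)\cdot(\text{sign})=0$, so $\mu(A)=0$ by Hahn--Banach. The total variation is defined by
\[
|\mu|(N)=\sup\Big\{\sum_{i=1}^{k}\|\mu(A_i)\|\ \Big|\ \{A_i\}_{i\le k}\text{ Borel partition of }N\Big\},
\]
and every term in every such sum is zero, hence $|\mu|(N)=0$. Thus $|\mu|\ll m$, i.e.\ $\mu$ is absolutely continuous with respect to the finite positive Borel measure $m$ on $\mathbf{T}$.

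Finally I would apply the Radon--Nikodym property of $X$ to the measure $\mu$ with respect to $m$: since $\mu$ has bounded variation and $|\mu|\ll m$, RNP yields a Bochner integrable $f:\mathbf{T}\to X$ with $\mu(A)=\int_A f\,dm$ for every Borel $A\subset\mathbf{T}$. This $f$ is the Radon--Nikodym derivative required by the definition of a-RNP, completing the proof. The only non-trivial step is the vector-valued F.\ and M.\ Riesz argument in the middle paragraph, but it is entirely routine once one has the scalar F.\ and M.\ Riesz theorem together with the partition definition of $|\mu|$.
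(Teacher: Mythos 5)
Your proof is correct. The paper does not actually prove this statement: it treats it as a known result and simply cites Huang--van Neerven \cite{HN}, so there is no argument in the paper to compare against. What you give is the standard proof of that known result (scalar F.\ and M.\ Riesz applied to $x^*\circ\mu$, upgrade to $|\mu|\ll m$ via Hahn--Banach and the partition definition of total variation, then invoke RNP), and each step checks out; the only cosmetic slip is the line $x^*(\mu(A))=|x^*\circ\mu|(A)\cdot(\text{sign})$, which should just read $(x^*\circ\mu)(A)=0$ since $x^*\circ\mu\ll m$ and $m(A)=0$.
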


\begin{thm}\label{dfg}
If $X$ has the analytic Radon-Nikodym property, then $X$ does not contain $c_0$.
\end{thm}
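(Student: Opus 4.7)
The plan is to prove the contrapositive: if $c_0\emb X$, then $X$ fails the analytic Radon-Nikodym property. I will exhibit explicitly a countably additive $X$-valued measure $\mu$ on the Borel $\sigma$-algebra $\cB(\textbf{T})$ that has bounded variation and vanishing negative Fourier coefficients but no Radon-Nikodym derivative with respect to normalized Lebesgue measure. The construction is essentially canonical: it is the ``identity'' operator viewed as an analytic measure.

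First I build a model measure into $c_0$. Let $(e_n)_{n\geq 0}$ be the standard basis of (complex) $c_0$ and define $\mu_0:\cB(\textbf{T})\to c_0$ by
$$\mu_0(A)=\sum_{n\geq 0}\left(\int_A e^{in\theta}\,\frac{d\theta}{2\pi}\right)e_n.$$
The scalar Riemann-Lebesgue lemma applied to $\chi_A$ guarantees $\mu_0(A)\in c_0$, and the coordinate bound $\bigl|\int_A e^{in\theta}\,d\theta/(2\pi)\bigr|\leq m(A)/(2\pi)$, where $m$ is Lebesgue measure, yields both countable additivity in the $c_0$-norm and bounded variation $|\mu_0|(\textbf{T})\leq 1$.

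Next, I compute Fourier coefficients. Interpreting the vector integral coordinate-by-coordinate, Fubini gives $\hat\mu_0(k)=\int_\textbf{T}e^{-ik\theta}\,d\mu_0(\theta)=e_k$ for $k\geq 0$ and $\hat\mu_0(k)=0$ for $k<0$, so the negative Fourier coefficients vanish. Yet $\mu_0$ admits no Bochner-integrable density $g:\textbf{T}\to c_0$ with respect to Lebesgue measure: such a density would satisfy $\hat g(k)=\hat\mu_0(k)=e_k$ for all $k\geq 0$, contradicting the vector-valued Riemann-Lebesgue lemma (which forces $\|\hat g(k)\|_{c_0}\to 0$) since $\|e_k\|_{c_0}=1$ for every $k$.

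Finally, I transport this example to $X$. Fix an isomorphic embedding $T:c_0\emb X$ and $c>0$ with $\|Tx\|_X\geq c\|x\|_{c_0}$ for all $x\in c_0$, and set $\mu:=T\circ\mu_0$. By linearity and boundedness of $T$, $\mu$ inherits bounded variation and the vanishing of negative Fourier coefficients, and $\hat\mu(k)=T(e_k)$ for $k\geq 0$. The lower bound on $T$ then gives $\|\hat\mu(k)\|_X\geq c>0$ for all $k\geq 0$, whereas any Bochner-integrable density for $\mu$ would again force $\|\hat\mu(k)\|_X\to 0$. Hence $\mu$ is a counterexample witnessing that $X$ fails a-RNP. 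The only subtlety is justifying the vector-valued Riemann-Lebesgue lemma, but this is routine: approximate $g\in L^1(\textbf{T};X)$ in norm by $X$-valued simple functions, apply the scalar case to each characteristic function appearing in the approximation, and close up with a standard $\varepsilon/3$ estimate using that $\|\hat h(k)\|_X\leq \|h\|_{L^1(\textbf{T};X)}$ uniformly in $k$.
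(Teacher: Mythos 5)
Your argument is correct. Note, however, that the paper does not prove this statement at all: Theorem \ref{dfg} is quoted as a known result with a pointer to \cite{HN} and is then used as a black box in the proof of Theorem \ref{label}. What you have supplied is the standard self-contained proof behind that citation. The measure $\mu_0(A)=\sum_{n\ge 0}\big(\int_A e^{in\theta}\,d\theta/2\pi\big)e_n$ is the canonical analytic measure of bounded variation into $c_0$: membership in $c_0$ follows from the scalar Riemann--Lebesgue lemma, countable additivity and $|\mu_0|(\textbf{T})\le 1$ follow from the coordinatewise bound $\big|\int_A e^{in\theta}\,d\theta/2\pi\big|\le m(A)$, and applying the coordinate functionals $e_n^*$ (which commute with the vector integral since $\mu_0$ has bounded variation) gives $\hat\mu_0(k)=e_k$ for $k\ge 0$ and $\hat\mu_0(k)=0$ for $k<0$. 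Composing with an into-isomorphism $T:c_0\to X$ with $\|Tx\|\ge c\|x\|$ transports the counterexample to $X$, since any Bochner density $g$ for $T\circ\mu_0$ would have $\hat g(k)=Te_k$ with $\|Te_k\|\ge c$, contradicting the vector-valued Riemann--Lebesgue lemma, whose $\varepsilon/3$ proof via simple functions you correctly sketch. An equally quick packaging, closer to how such statements are usually phrased, is to observe that the analytic Radon--Nikodym property passes to closed subspaces and that $\mu_0$ already witnesses its failure for $c_0$ itself; your direct transport of the measure avoids even having to invoke that hereditary fact. Either way, the proposal is a complete and correct proof of a statement the paper only cites.
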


\begin{proof}\textbf{(of \emph{theorem \ref{label}}).}
Let $\varphi:\Tr\to \SB_\mathbb{C}$ be defined as in the proof of \emph{theorem \ref{uc}}. Say $\theta\in\WF$. Then $\varphi(\theta)$ is reflexive, hence $\varphi(\theta)=\varphi(\theta)^{**}$ is 
a separable dual, therefore it has the \textbf{RNP}. By \emph{theorem \ref{qaqpo}}, $\varphi(\theta)\in\text{a-RNP}$, for all $\theta\in\WF$. 

On the other hand, if $\theta\in\IF$, then $c_0\emb\varphi(\theta)$, hence, by \emph{theorem \ref{dfg}}, $\varphi(\theta)\notin\text{a-RNP}$.
\end{proof}

\section{Local structure of Banach spaces.}

\subsection{Local Unconditional Structure.}

A Banach space $X$ is said to have \emph{local unconditional structure} (or \emph{l.u.st.}) if there exists $\lambda>0$ such that for each finite dimensional Banach space $E\subset X$ there exists a finite dimensional space $F$ with an unconditional basis and operators $u:E\to F$, and $w:F\to X$ such that $w\circ u=\text{Id}_{|E}$, and $ub(F)\|u\|\|w\|\leq \lambda$, where $ub(F)$ is an unconditional constant for $F$. 

\begin{thm}
Let $\text{LUST}=\{X\in\SB|X\text{ has l.u.st.}\}$.  LUST is Borel.
\end{thm}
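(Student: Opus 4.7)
The plan is to express the property of l.u.st.\ as a countable Boolean combination of Borel conditions on $X$, using the selectors $\{S_n\}$, rational parameters, and the compactness of the space of normalized 1-unconditional norms on $\R^m$. First I would reduce to 1-unconditional bases by replacing $\nu$ on $\R^m$ with $\tilde\nu(x):=\sup_{\varepsilon\in\{\pm1\}^m}\nu(\sum\varepsilon_jx_je_j)$, which is 1-unconditional and satisfies $\nu\le\tilde\nu\le ub(F)\cdot\nu$; the factor $ub(F)$ in the definition of l.u.st.\ is then absorbed into $\|u\|$, so WLOG $ub(F)=1$. Let $\mathcal{U}_m$ denote the set of 1-unconditional norms on $\R^m$ with $\nu(e_j)=1$ for every $j$; any such $\nu$ satisfies $\|\cdot\|_\infty\le\nu\le\|\cdot\|_1$ and is $1$-Lipschitz in the $\ell_1$ norm, so an Arzela-Ascoli argument shows $\mathcal{U}_m$ is a compact Polish space in the topology of uniform convergence on compacts.

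Next I would parametrize the factorization combinatorially. A finite-dimensional $E\subset X$ is encoded by a tuple $\bar n=(n_1,\ldots,n_k)\in\N^k$ with $\{S_{n_i}(X)\}$ linearly independent (a Borel condition on $(X,\bar n)$), via $E:=\mathrm{span}\{S_{n_i}(X):i\le k\}$. The operator $u:E\to(\R^m,\nu)$ is encoded by a rational matrix $A=(a_{ji})\in\Q^{m\times k}$ via $u(S_{n_i}(X)):=\sum_j a_{ji} e_j$, and $w:(\R^m,\nu)\to X$ by indices $\bar p=(p_1,\ldots,p_m)\in\N^m$ via $w(e_j):=S_{p_j}(X)$. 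My claim is that $X\in\text{LUST}$ if and only if there exists $\lambda\in\N$ such that for every linearly independent $\bar n$ and every $\eta\in\Q_+$, there exist $m$, $\nu\in\mathcal{U}_m$, $A\in\Q^{m\times k}$ and $\bar p\in\N^m$ such that, for all $r\in\Q^k$, $c\in\Q^m$ and each $i\le k$,
$$\nu\bigl(\textstyle\sum_j(\sum_i a_{ji}r_i)e_j\bigr)\cdot\bigl\|\textstyle\sum_j c_j S_{p_j}(X)\bigr\|\le\lambda\,\bigl\|\textstyle\sum_i r_i S_{n_i}(X)\bigr\|\cdot\nu\bigl(\textstyle\sum_j c_j e_j\bigr)$$
and $\bigl\|\sum_j a_{ji}S_{p_j}(X)-S_{n_i}(X)\bigr\|\le\eta$. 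The direction exact $\Rightarrow$ approximate follows from density of $\{S_p(X)\}$ in $X$ and of $\Q$ in $\R$. For the converse, fix $E$, pick $\eta$ sufficiently small, and form the correction $w':=w+(\iota_E-w\circ u)\circ u^{-1}\circ P_{u(E)}$, where $P_{u(E)}:F\to u(E)$ is any projection; then $w'\circ u=\mathrm{Id}_E$ holds exactly and $\|u\|\|w'\|\le\lambda+1$, yielding exact l.u.st.\ with constant $\le\lambda+1$.

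The final step is a quantifier count. For each fixed $(\lambda,k,\bar n,\eta,m,A,\bar p)$, the displayed inequality and the approximate factorization condition are countable intersections of closed conditions in $(X,\nu)\in\SB\times\mathcal{U}_m$, and hence themselves closed. Projection onto $\SB$ along the compact fiber $\mathcal{U}_m$ preserves closed sets, so the projection is closed in $\SB$. The full set $\text{LUST}$ is therefore a countable union over $\lambda\in\N$ of countable intersections over $(k,\bar n,\eta)$ of countable unions over $(m,A,\bar p)$ of these closed sets, and is therefore Borel.

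The main obstacle I anticipate is the "approximate $\Rightarrow$ exact" direction of the claimed equivalence: one must verify that a single integer $\lambda$ witnesses exact l.u.st.\ uniformly in $E$ once approximate l.u.st.\ holds, even though the correction term $(\iota_E-w\circ u)\circ u^{-1}\circ P_{u(E)}$ can have a large operator norm when the projection $P_{u(E)}$ or the conditioning of $u$ is poor. Choosing $\eta$ sufficiently small (depending on $E$) handles this, but care is needed to argue that no uniformity in $\dim F$ is required; once this is in hand, the Borel computation is routine.
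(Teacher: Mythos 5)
Your argument reaches the right conclusion but by a genuinely different route from the paper. The paper never introduces an abstract parameter space of unconditional norms: it realizes the finite-dimensional space $F$ concretely as $\mathrm{span}\{S_{n''_1}(C(\Delta)),\dots,S_{n''_l}(C(\Delta))\}$ inside the universal space $C(\Delta)$, checks $M$-unconditionality of that basis by quantifying over rational coefficients, encodes $u$ and $w$ by matching selectors to selectors (using that the $S_n$ are closed under rational linear combinations), and demands the factorization $w\circ u=\mathrm{Id}_E$ \emph{exactly}; LUST is then written as a countable Boolean combination of Borel sets, with no projection at all. You instead parametrize $F$ by a point of the compact space $\mathcal{U}_m$ of normalized $1$-unconditional norms on $\R^m$, allow only an $\eta$-approximate factorization, repair it with the correction $w'=w+(\iota_E-w\circ u)\circ u^{-1}\circ P_{u(E)}$ (correctly using that $\|u^{-1}\|\le 2\|w\|$ once $\eta$ is small and that $\|P_{u(E)}\|\le k=\dim E$ independently of $m$), and then project along the compact fiber. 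Your version is arguably more honest about the exact-versus-approximate issue, which the paper's formula quietly assumes away; the paper's version buys a purely countable computation with no appeal to projections. Both arguments share the same unaddressed (but standard) reduction from arbitrary finite-dimensional $E\subset X$ to subspaces spanned by selectors.

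One step of yours needs repair as stated: the set you project is \emph{not} closed in $\SB\times\mathcal{U}_m$, because $X\mapsto S_n(X)$ and the induced norm evaluations are only Borel in the Effros--Borel structure, not continuous; only the $\nu$-sections are closed. So "projection along a compact fiber preserves closed sets" does not apply directly. Either pass to a finer Polish topology on $\SB$ with the same Borel sets in which the finitely many selector maps involved are continuous (Kechris, \emph{Classical Descriptive Set Theory}, 13.11), after which the set really is closed and the projection is closed in the refined topology, hence Borel in the original; or invoke the theorem that a Borel subset of $Y\times K$ all of whose sections are compact has Borel projection. With that fix the quantifier count goes through and the proof is complete.
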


\begin{proof}
In order to make the idea behind the notation below clear, let's remember some simple facts about linear algebra. Let $X$ be a Banach space and $x_1,..., x_l\in\ X\setminus\{0\}$. Then $span\{x_1,...,x_l\}$ has dimension $l$ if, and only if, there exists $K\in\Q_+$ such that $\|\sum_{i=1}^ka_ix_i\|\allowbreak \leq K \|\sum_{i=1}^la_ix_l\|$, for all $k\leq l$, and all $a_1,...,a_l\in\Q$. Also, if $x_1,...,x_l\in X$ are linear independent, then $x_1,...,x_l$ are $M$-unconditional if, and only if, $\|\sum_{i=1}^la_ix_i\|\leq M\|\sum_{i=1}^lb_ix_i\|$, for all $a_1,...,a_l,b_1,...,b_l\in\Q$ such that $|a_i|\leq|b_i|$, for all $i\in\{1,...,l\}$.	

Remember the functions $\{S_n\}_{n\in\N}$ were chosen to be linearly closed under rational linear combinations. Say $X,Y\in\SB$, $n_1,...,n_k\in\N$, and $n'_1,...,n'_k\in\N$. If $(S_{n_i}(X))_{i=1}^k$ is linearly independent, we denote by $P(X,Y,(n_i),(n'_i))$ the linear function from $span\{S_{n_1}(X),...,\allowbreak S_{n_k}(X)\}$ to $span\{S_{n'_1}(Y),...,S_{n'_k}(Y)\}$ such that $S_{n_i}(X)\mapsto S_{n'_i}(Y)$, for all $i\in\{1,...,k\}$. Now notice that

\begin{align*}
\text{LUST}=&
\bigcup_{\lambda\in\Q_+}
\bigcap_{\substack{k\in\N\\n_1,...,n_k\in\N}}
\bigcup_{\substack{n'_1,...,n'_k\in\N\\ l\geq k,\ M\in \Q_+ \\ n''_1,...,n''_l\in\N\\ n'''_1,...,n'''_l\in\N}}
\bigcap_{\substack{a_1,...,a_l\in\Q_+\\ b_1,...,b_l\in\Q_+\\ (|a_i|\leq|b_i|,\ \forall i)\\ d_1,...,d_k\in\Q}}
\bigcup_{\substack{e_1,...,e_l\in\Q\\A,B\in\Q_+\\ MAB<\lambda}}
\bigcap_{w_1,...,w_l\in\Q_+}\\
&\Bigg\{X\in\SB|\left(\begin{array}{c l}
\exists K\in\N\text{ \ s.t.  }\ \forall m\leq k,\ \forall c_1,...,c_k\in\Q,\ \\
\big\|\sum_{i=1}^mc_iS_{n_i}(X)\big\|\leq K\big\|\sum_{i=1}^kc_iS_{n_i}(X)\big\|\end{array}\right)\\
& \ \ \ \ \ \ \ \ \ \ \ \ \ \ \ \ 
\Rightarrow\left(\begin{array}{c l}
\sum_{i=1}^kd_iS_{n'_i}(C(\Delta))=\sum_{i=1}^le_iS_{n''_i}(C(\Delta))\\
\&\\
 \big\|\sum_{i=1}^la_iS_{n''_i}(C(\Delta))\big\|\leq M \big\|\sum_{i=1}^lb_iS_{n''_i}(C(\Delta))\big\|\\ 
\&\\
\big\|\sum_{i=1}^kw_iS_{n'_{i}}(C(\Delta))\big\|\leq A\big\|\sum_{i=1}^kw_iS_{n_{i}}(X)\big\|\\
\& \\ 
\big\|\sum_{i=1}^lw_iS_{n'''_{i}}(X)\big\|\leq B\big\|\sum_{i=1}^lw_iS_{n''_{i}}(C(\Delta))\big\|\\
\&\\
P(C(\Delta),X,(n''_i),(n'''_i))\Big(S_{n'_{i}}(C(\Delta))\Big)=S_{n_i}(X)
\end{array}\right)\Bigg\}.
\end{align*}

There are a couple of comments about the equality above that should be made. First, notice that the restrictions  $\sum_{i=1}^kd_iS_{n'_i}(C(\Delta))\allowbreak=\allowbreak\sum_{i=1}^le_iS_{n''_i}(C(\Delta))$ and $\big\|\sum_{i=1}^la_iS_{n''_i}(C(\Delta))\big\|\allowbreak\leq M \big\|\sum_{i=1}^lb_iS_{n''_i}(C(\Delta))\big\|$ do not depend on $X$, i.e., those restrictions should actually be incorporated in the unions and intersections preceding the set. We believe this would only make the notation harder, so we take the liberty of writing it as above. Also, the only thing in the equality above that is not clearly  Borel is  $X\mapsto P(C(\Delta),X,(n''_i),(n'''_i))\Big(S_{n'_{i}}(C(\Delta))\Big)$. But  $P(C(\Delta),X,(n''_i),(n'''_i))$ is nothing more than a matrix with coordinates depending on the Borel functions $X\mapsto S_{n'''_i}(X)$. So we are done.
\end{proof}

\section{Non-Universality Results.}

In this section we use ideas that can be found in \cite{S} (\emph{chapter $6$}) to show the non existence of universal spaces for some specific classes of Banach spaces. Precisely, say $\cP$ is a property of separable Banach spaces, i.e., $\cP \subset \SB$, and $Y\cong X\in\cP$, implies $Y\in\cP$, can we find a Banach space $X$ with property $\cP$ such that all Banach spaces with property $\cP$ can be isomorphically embedded in $X$? If yes, we say $X$ is a $\cP$-universal element of $\cP$. Analogously, we say that $X\in \cP$ is a complementedly $\cP$-universal element of $\cP\subset\SB$ if every element of $\cP$ can be complementedly isomorphically embedded in $X$. We say a property $\cP$ is \emph{pure} if $Y\emb X\in\cP$ implies $Y\in\cP$ and complementadly pure if  $Y\overset{\perp}{\emb}X\in \cP$ implies $Y\in \cP$. We have the following easy lemma.

\begin{lemma}
Let $\cP\subset \SB$ be a pure property and assume $\cP$ is non analytic. Then $\cP$ has no $\cP$-universal element. If $\cP$ is assumed to be complementedly pure then we have that $\cP$ has no complementedly $\cP$-universal element.
\end{lemma}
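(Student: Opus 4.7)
The plan is to argue by contrapositive: I will show that if a pure property $\cP$ admits a $\cP$-universal element $X\in\cP$, then $\cP$ must be analytic. Combining the definition of universality with purity, one obtains
\[\cP=\{Y\in\SB:Y\emb X\},\]
since universality gives $\cP\subseteq\{Y:Y\emb X\}$, while purity gives the reverse inclusion ($Y\emb X\in\cP$ forces $Y\in\cP$). So it suffices to show that for a fixed $X\in\SB$ the section $\{Y\in\SB:Y\emb X\}$ is analytic, which will contradict the hypothesis on $\cP$.

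To verify analyticity, I code candidate isomorphic embeddings $T:Y\to X$ by their values on the dense selectors $(S_n(Y))_{n\in\N}$ from \emph{lemma \ref{lll}}, which are closed under $\Q$-linear combinations by the convention of Section~2. A candidate is a sequence $(y_n^*)_{n\in\N}\in X^{\N}$ subject to the Borel constraints that (i) whenever $qS_n(Y)+pS_k(Y)=S_m(Y)$ one has $qy_n^*+py_k^*=y_m^*$, for all $p,q\in\Q$, and (ii) there exist $a,b\in\Q_+$ with $a\|S_n(Y)\|\leq\|y_n^*\|\leq b\|S_n(Y)\|$ for every $n$. Each of these clauses is Borel in $(Y,(y_n^*))\in\SB\times X^{\N}$, and the projection onto the $\SB$-coordinate is exactly $\{Y:Y\emb X\}$; as a projection of a Borel set in a standard Borel product, this is analytic, closing the first part.

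For the complementedly pure version one runs the same argument with $\emb$ replaced by $\overset{\perp}{\emb}$, extending the coding by a second sequence $(b_n)_{n\in\N}\in X^{\N}$ meant to represent $b_n=P(S_n(X))$ for a projection $P:X\to X$ whose range is the isomorphic copy of $Y$ produced by $(y_n^*)$. The extra Borel conditions to impose are: $P$ is a bounded linear operator (the same linearity and boundedness clauses applied to the $b_n$'s), each $b_n$ lies in $\overline{span}\{y_k^*\}$, and $P$ restricts to the identity on that closure. The resulting set of valid tuples $(Y,(y_n^*),(b_n))$ is Borel, and its projection to $\SB$ is $\{Y:Y\overset{\perp}{\emb}X\}$, which is therefore analytic.

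The only step I expect to require genuine care is the Borel expression of the fact that $P$ acts as the identity on $\overline{span}\{y_k^*\}$: since the $y_n^*$ are arbitrary elements of $X$ rather than selector values, the equality $P(y_n^*)=y_n^*$ must be unpacked as the statement that for every $\eps\in\Q_+$ there exists a $\Q$-rational combination $\sum_i q_iS_{k_i}(X)$ approximating $y_n^*$ to within $\eps$ whose image $\sum_i q_ib_{k_i}$ also approximates $y_n^*$ to within a controlled multiple of $\eps$. Spelling this out is a routine exercise in the Kuratowski--Ryll-Nardzewski selector machinery already used throughout the paper; beyond this, purity and the non-analyticity hypothesis do all the work.
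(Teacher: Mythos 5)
Your proof is correct and follows the same route as the paper: both arguments reduce the statement to the fact that, for fixed $X\in\SB$, the sets $\{Y\in\SB\mid Y\emb X\}$ and $\{Y\in\SB\mid Y\overset{\perp}{\lemb}X\}$ are analytic, and then observe that purity plus universality forces $\cP$ to equal such a set. The only difference is that the paper simply cites this analyticity as well known (Schlumprecht's notes, \emph{theorem 3.5}), whereas you supply the standard selector-coding argument for it; your sketch of that coding is sound.
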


\begin{proof}
Say $X\in \cP$ is $\cP$-universal. Let $A=\{Y\in\SB|Y\emb X\}$. It is well known that $A$ is analytic, for all $X\in\SB$ (see \cite{S}, \emph{theorem $3.5$}, pag. $80$). Clearly $\cP=A$, contradicting our hypothesis that $\cP$ is not analytic. For the complementedly universal case we let  $A=\{Y\in\SB|Y\overset{\perp}{\lemb} X\}$ and, as $A$ is also well known to be analytic, we are done.
\end{proof}

This lemma together with our previous results easily give us some interesting corollaries.

\begin{cor}
Let $\mathcal{U}$ and $\mathcal{W}$ be as in the previous sections. There is no complementedly universal space $X\in\mathcal{U}$ for the class $\mathcal{U}$. The same is true for $\mathcal{W}$.
\end{cor}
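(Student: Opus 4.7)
The plan is to deduce the corollary directly from the preceding lemma, which reduces the task to verifying, for each of the two classes, that it is (a) complementedly pure and (b) non-analytic. Both ingredients essentially reassemble results already established earlier in the paper.

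For non-analyticity I would invoke the standard fact that a subset of a standard Borel space that is simultaneously analytic and coanalytic is Borel. Theorem \ref{uc} already shows $\cU$ is complete coanalytic, hence non-Borel, so it cannot also be analytic. For $\cW$ I would argue that $\Pi^1_1$-hard sets are never analytic: if $\cW$ were analytic, then any coanalytic $B\subset Y$ in a standard Borel space $Y$ would Borel-reduce to $\cW$ via some $f:Y\to\SB$, making $B=f^{-1}(\cW)$ analytic, contradicting the existence of coanalytic non-analytic sets such as $\WF$; Theorem \ref{wwwww} then gives the non-analyticity of $\cW$.

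For complemented purity, the case of $\cU$ is immediate from the characterization used in the proof of Theorem \ref{uc}, namely $\cU=\text{NC}_{c_0}$: since no embedding can introduce a copy of $c_0$ into a larger space that lacks one, $\cU$ is in fact pure, which is strictly stronger than complementedly pure. For $\cW$ I plan to mimic the transfer argument already used in the proof for $\mathcal{BS}$. Given $Y\overset{\perp}{\lemb}X$ with inclusion $i:Y\to X$ and retraction $P:X\to Y$ satisfying $P\circ i=\mathrm{Id}_Y$, and given a bounded projection $\Pi:\cL(X)\to\cW(X)$, I would define
\[
\Pi_0:\cL(Y)\to\cL(Y),\qquad \Pi_0(T)=P\circ\Pi(i\circ T\circ P)\circ i,
\]
and check four routine points: $\Pi_0$ is linear and bounded; it lands in $\cW(Y)$ because weak compactness is preserved under composition with continuous operators; if $T\in\cW(Y)$ then $i\circ T\circ P\in\cW(X)$, so $\Pi$ fixes it and $\Pi_0(T)=P\circ i\circ T\circ P\circ i=T$; hence $\Pi_0$ is a bounded projection of $\cL(Y)$ onto $\cW(Y)$, so $Y\in\cW$.

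The only mildly delicate step is the complemented purity of $\cW$, because, unlike in the case of $\cU$, we do not have a clean embedding-level characterization of $\cW$ available (the proof of Theorem \ref{wwwww} uses only one direction of the BBG criterion). However, the transfer argument above side-steps this issue entirely and is essentially a direct importation of the argument already recorded for $\mathcal{BS}$, so once both hypotheses are in hand, the previous lemma yields the corollary without any additional geometric input.
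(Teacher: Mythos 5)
Your proposal is correct and follows essentially the same route as the paper: both reduce to the preceding lemma by checking non-analyticity (immediate from the $\Pi^1_1$-hardness results) and complemented purity, the latter via transferring a bounded projection of $\cL(X)$ onto the ideal to a projection of $\cL(Y)$ onto the corresponding ideal of a complemented subspace $Y$. The only (harmless) divergence is that for $\cU$ you shortcut the transfer argument by invoking the full BBG characterization $\cU=\text{NC}_{c_0}$, which in fact yields the stronger conclusion that $\cU$ is pure, whereas the paper runs the same operator-transfer argument for both classes.
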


\begin{proof}
First notice that we had actually shown that both these classes are not only non Borel but non analytic. Now, we only need to notice that if $X\cong X_1\oplus X_2$ and $P:\cL(X)\to\cU(X)$ is a projection then $\tilde{P}(T)=P_1\circ P(T)_{|X_1}$, where $P_1:X_1\oplus X_2\to X_1$ is the standard projection, is a projection from $\cL(X_1)$ to $ \cU(X_1)$ (the same works for the class $\mathcal{W}$).
\end{proof}

\begin{cor}
There is no $X\in\text{BS}$ universal for the class BS. The same holds for \text{ABS} and WBS.
\end{cor}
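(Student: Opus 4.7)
The plan is to reduce each of the three non-universality statements to a direct application of the preceding lemma. For that lemma to apply to a class $\cP\in\{\text{BS},\text{ABS},\text{WBS}\}$, I need (i) that $\cP$ is a pure property and (ii) that $\cP$ is non-analytic. Both verifications are quick given the results already established in the paper.

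First I would check purity. Suppose $Y\emb X$ with $X\in\text{BS}$ and let $(y_n)_{n\in\N}$ be bounded in $Y$; viewing $(y_n)$ inside $X$, it admits a subsequence whose Cesaro mean converges in norm in $X$, and because the embedding is an isomorphism onto its image, the same Cesaro mean converges in $Y$. The identical argument, using alternating-signs Cesaro means, shows $\text{ABS}$ is pure. For $\text{WBS}$ the only extra ingredient is \emph{lemma \ref{llkk}}: a weakly null sequence in $Y$ remains weakly null when pushed into $X$, so one again extracts a Cesaro-norm-null subsequence inside $X$ and pulls it back to $Y$.

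Next I would establish non-analyticity. The classes $\text{BS}$ and $\text{ABS}$ are complete coanalytic by \emph{theorem \ref{bsnonborel}} and \emph{theorem \ref{qwe}}, so in particular they are non-Borel; since any set that is both analytic and coanalytic is Borel, they cannot be analytic. For $\text{WBS}$, only $\Pi^1_1$-hardness is available (\emph{theorem \ref{wbsnonborel}}), but this already suffices: if $\text{WBS}$ were analytic, then via the Borel reductions guaranteed by $\Pi^1_1$-hardness every coanalytic set would be a Borel preimage of an analytic set and hence analytic, forcing every coanalytic set to be Borel, which contradicts the existence of complete coanalytic non-Borel sets such as $\WF$.

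With (i) and (ii) in hand for each of $\text{BS}$, $\text{ABS}$, $\text{WBS}$, the preceding lemma immediately yields the three non-universality statements. There is essentially no obstacle in this proof; the only mildly delicate point is noting that $\Pi^1_1$-hardness alone (not complete coanalyticity) is enough to rule out analyticity, which is what allows the $\text{WBS}$ case to go through despite the weaker complexity information currently available for that class.
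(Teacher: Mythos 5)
Your proposal is correct and follows the same route as the paper: the paper's proof of this corollary is a one-line appeal to the preceding lemma, remarking only that purity of WBS follows from \emph{lemma \ref{llkk}}, and you have simply filled in the implicit verifications (purity of all three classes, and non-analyticity via $\Pi^1_1$-hardness). Your observation that $\Pi^1_1$-hardness alone rules out analyticity is exactly the point the paper itself makes in the analogous corollary for $\mathcal{U}$ and $\mathcal{W}$, so the WBS case goes through as you describe.
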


\begin{proof}
One way of noticing WBS is pure is \emph{lemma \ref{llkk}}.
\end{proof}

\begin{cor}
There is no $X\in\mathcal{BS}$ complementedly  universal for the class $\mathcal{BS}$.
\end{cor}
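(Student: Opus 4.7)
The plan is to apply the non-universality lemma stated just before the corollaries to $\cP = \mathcal{BS}$. This requires verifying two conditions: that $\mathcal{BS}$ is non analytic, and that $\mathcal{BS}$ is complementedly pure.

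For non analyticity I use the same observation as in the corollary for $\cU$ and $\cW$: the $\Pi^1_1$-hardness already proved rules out analyticity. Indeed, if $\mathcal{BS}$ were analytic, then a Borel reduction of $\WF$ into $\mathcal{BS}$ would express $\WF$ as the preimage of an analytic set under a Borel map, hence as analytic, and by Suslin's theorem $\WF$ would then be Borel, contradicting the $\Pi^1_1$-completeness of $\WF$.

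The main step is complemented purity, which I expect to prove by adapting the restriction/extension trick used for $\cU$ and $\cW$. Suppose $X \cong X_1 \oplus X_2$ lies in $\mathcal{BS}$ and fix a bounded projection $P \colon \cL(X) \to \mathcal{BS}(X)$. For each $T \in \cL(X_1)$ let $\tilde T(x_1, x_2) = (Tx_1, 0)$ be its natural extension to $X$, and define
$$\tilde P(T) = P_1 \circ P(\tilde T)\big|_{X_1},$$
where $P_1 \colon X \to X_1$ is the canonical projection. The map $\tilde P$ is bounded and linear. Its range lies in $\mathcal{BS}(X_1)$: given a bounded sequence $(y_n)$ in $X_1$, the sequence $((y_n,0))$ is bounded in $X$, so the Banach-Saks operator $P(\tilde T) \in \mathcal{BS}(X)$ admits a subsequence along which the Cesaro means converge in norm, and continuity of $P_1$ preserves this convergence in $X_1$. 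If moreover $T \in \mathcal{BS}(X_1)$ to start with, then $\tilde T \in \mathcal{BS}(X)$ since on any bounded sequence $((y_n,z_n))$ one can extract a subsequence making the Cesaro means of $T(y_n)$ converge and the second coordinate of $\tilde T$ is identically zero; hence $P(\tilde T) = \tilde T$ and $\tilde P(T) = P_1 \circ \tilde T|_{X_1} = T$. Therefore $\tilde P$ is a bounded projection from $\cL(X_1)$ onto $\mathcal{BS}(X_1)$, giving $X_1 \in \mathcal{BS}$.

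With both ingredients in hand, the non-universality lemma produces no complementedly $\mathcal{BS}$-universal element of $\mathcal{BS}$. I do not anticipate a genuine obstacle here: the only non-trivial piece is the verification that the extension $\tilde T$ inherits the Banach-Saks property from $T$, and this is immediate from the zero second coordinate. The argument is essentially a transcription of the $\cU$, $\cW$ corollary with $\mathcal{BS}$ in place of the other ideals.
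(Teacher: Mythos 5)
Your proof is correct and follows exactly the route the paper intends: the $\Pi^1_1$-hardness of $\mathcal{BS}$ (via the reduction of $\WF$) rules out analyticity, and the restriction/extension trick $\tilde P(T)=P_1\circ P(\tilde T)|_{X_1}$ — the same one the paper writes out for $\cU$ and $\cW$ — establishes complemented purity, so the non-universality lemma applies. Your verification that $\tilde T$ inherits the Banach--Saks property from $T$ and that $P_1$ preserves norm convergence of Ces\`aro means is exactly the routine check the paper leaves implicit.
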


\begin{cor}
There is no $X\in\text{S}$ universal for the class S. 
\end{cor}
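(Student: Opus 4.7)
The plan is to invoke the non-universality lemma stated just before the corollary: if $\cP\subset\SB$ is pure and non-analytic, then $\cP$ has no $\cP$-universal element. So I need to check two things for $\cP=\text{S}$: that S is not analytic, and that S is pure.

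For non-analyticity, I would appeal to \emph{theorem \ref{schurnonborel}}, which says S is $\Pi^1_1$-hard. A standard descriptive set theoretic argument shows that $\Pi^1_1$-hard sets cannot be analytic: if S were analytic, then every coanalytic subset of every standard Borel space would Borel reduce to an analytic set, hence be analytic, contradicting the existence of coanalytic non-analytic sets (e.g.\ $\WF$). So S is non-analytic.

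For purity, I would verify directly that the Schur property passes to isomorphic subspaces. Suppose $Y\emb X$ with $X\in\text{S}$, say via an isomorphic embedding $T:Y\to X$. Given a weakly null sequence $(y_n)_{n\in\N}$ in $Y$, continuity of $T^*$ gives $T(y_n)\to 0$ weakly in $X$, whence $\|T(y_n)\|\to 0$ by the Schur property of $X$, and since $T$ is an isomorphism onto its image this forces $\|y_n\|\to 0$. Thus $Y\in\text{S}$, and S is a pure property.

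Combining the two facts with the non-universality lemma yields the corollary: there is no $X\in\text{S}$ that is S-universal. The only slightly subtle point, and the one I would flag as the main (though mild) obstacle, is the passage from $\Pi^1_1$-hardness to non-analyticity, since the lemma is stated for non-analytic properties rather than merely non-Borel ones. Everything else is either immediate from earlier results of the paper or an elementary verification.
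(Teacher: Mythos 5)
Your proof is correct and follows exactly the route the paper intends: the corollary is stated as an immediate consequence of the non-universality lemma, with non-analyticity of S coming from the Borel reduction $\varphi_{\cE,p}^{-1}(\text{S})=\WF$ in \emph{theorem \ref{schurnonborel}} and purity being the elementary fact that the Schur property passes to (isomorphic copies of) subspaces. The point you flag about $\Pi^1_1$-hardness implying non-analyticity is exactly the observation the paper makes explicitly in the proof of the corollary for $\mathcal{U}$ and $\mathcal{W}$ (``we had actually shown that both these classes are not only non Borel but non analytic''), so there is no gap.
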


\begin{cor}
There is no $X\in\text{DP}$  complementedly  universal for the class DP.
\end{cor}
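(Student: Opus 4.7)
The plan is to apply the lemma about pure properties together with the earlier result that DP is $\Pi^1_1$-hard, so the only thing to verify is that DP is complementedly pure. Once complemented purity is established, the lemma yields the conclusion at once, provided we observe that the $\Pi^1_1$-hardness shown previously in fact implies non-analyticity.

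First I would check complemented purity of DP. Suppose $X \in \text{DP}$ and $Y \overset{\perp}{\lemb} X$, witnessed by an isomorphic embedding $i:Y\to X$ and a bounded projection $P:X\to Y$ with $P\circ i=\mathrm{Id}_Y$. Given any Banach space $Z$ and any weakly compact operator $T:Y\to Z$, the composition $T\circ P:X\to Z$ is weakly compact (the class of weakly compact operators forms a two-sided ideal). Since $X$ has the Dunford--Pettis property, $T\circ P$ is completely continuous, i.e.\ carries weakly compact subsets of $X$ into norm-compact subsets of $Z$. Now a weakly compact set $K\subset Y$ has $i(K)$ weakly compact in $X$, and $T(K)=T\circ P\circ i(K)=(T\circ P)(i(K))$ is therefore norm-compact in $Z$. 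Hence $T$ is completely continuous, and $Y\in\text{DP}$.

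Next I would note that the earlier theorem that DP is $\Pi^1_1$-hard in $\SB$ upgrades at once to non-analyticity: if DP were analytic, then every coanalytic subset of every standard Borel space would Borel-reduce into an analytic set, forcing every coanalytic set to be analytic, which is false. So DP is non-analytic.

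Finally, applying the lemma on complementedly pure properties (with $\cP=\text{DP}$) produces the stated conclusion: there is no complementedly $\text{DP}$-universal element in $\text{DP}$. There is no real obstacle here; the only step that requires checking is the complemented purity of the Dunford--Pettis property, which is the standard argument sketched above.
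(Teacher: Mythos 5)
Your proposal is correct and follows exactly the route the paper intends: the corollary is an instance of the lemma on complementedly pure, non-analytic properties, with the $\Pi^1_1$-hardness of DP supplying non-analyticity and the standard ideal-property argument (compose a weakly compact $T:Y\to Z$ with the projection onto $Y$) supplying complemented purity. The paper leaves these verifications implicit ("contained in the previous sections or a slight modification"), and your filled-in details are accurate.
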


\begin{cor}
There is no $X\in\text{RNP}$ universal for the class RNP. The same holds for CCP and a-RNP.
\end{cor}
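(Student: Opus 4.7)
The plan is to apply the previous lemma, which requires two ingredients for each of the three classes: that the class is a pure property, and that it is non-analytic.

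First I would observe that non-analyticity has essentially already been established in the paper. Indeed, the paper recalls that $\text{RNP}$ is complete coanalytic by Bossard's result in \cite{Bo}, while \emph{theorem \ref{ccpnonborel}} (together with its refinement showing $\text{CCP}$ is complete coanalytic via Girardi's characterization) and \emph{theorem \ref{label}} show that $\text{CCP}$ and $\text{a-RNP}$ are $\Pi^1_1$-hard. Any $\Pi^1_1$-hard set $A$ fails to be analytic, since otherwise, being both analytic and $\Pi^1_1$-hard, it would provide a Borel reduction of every coanalytic set to an analytic set, forcing $\Pi^1_1 \subset \Sigma^1_1$, which is false. Hence all three classes are non-analytic.

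Next I would verify purity, i.e.\ hereditariness under (isomorphic) subspaces. For $\text{RNP}$ this is a classical fact: a closed subspace of a space with the Radon-Nikodym property again has the Radon-Nikodym property, since any vector measure valued in a subspace is a vector measure in the ambient space and its Radon-Nikodym derivative, being the a.e.\ limit of measurable simple functions valued in the (closed) subspace, remains valued in the subspace a.e. The same argument works for $\text{a-RNP}$: if $Y \hookrightarrow X \in \text{a-RNP}$ and $\mu$ is a $Y$-valued measure of bounded variation on $\mathbf{T}$ with vanishing negative Fourier coefficients, then viewing $\mu$ as $X$-valued yields a Radon-Nikodym derivative which, by closedness of $Y$ in $X$, takes values in $Y$ almost everywhere. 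For $\text{CCP}$, purity is even more direct: if $Y \hookrightarrow X \in \text{CCP}$ via $i:Y\to X$ and $T: L_1[0,1]\to Y$ is any bounded operator, then $i\circ T:L_1[0,1]\to X$ is completely continuous; since $i$ is an isomorphism onto its image, it reflects norm-compactness, and hence $T$ itself is completely continuous.

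With purity and non-analyticity in hand, the previous lemma immediately yields that none of $\text{RNP}$, $\text{CCP}$, $\text{a-RNP}$ admits a $\cP$-universal element.

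I do not anticipate any real obstacle here; the only point requiring a small argument is that $\Pi^1_1$-hardness forbids analyticity, which I would include as a brief remark, and the a.e.\ subspace-valuedness of the Radon-Nikodym derivative in the $\text{RNP}$ and $\text{a-RNP}$ cases, which is standard from the Pettis measurability theorem but perhaps worth a one-line mention.
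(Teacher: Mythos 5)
Your proposal is correct and follows exactly the route the paper intends: apply the preceding lemma by combining purity of $\text{RNP}$, $\text{CCP}$, and $\text{a-RNP}$ with their non-analyticity (Bossard's result for $\text{RNP}$, and the $\Pi^1_1$-hardness established in the theorems on $\text{CCP}$ and $\text{a-RNP}$). The paper leaves these verifications implicit, and the details you supply --- that $\Pi^1_1$-hardness rules out analyticity and that each property is hereditary under isomorphic embeddings --- are the right ones.
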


The first claim of the corollary above can be obtained by results in \cite{Bo} or by letting $\varphi_{\cE,p}$ be as in the proof of \emph{theorem \ref{label}}. After getting this corollary, we discovered that its first claim had already been discovered by M. Talagrand by completely different methods. Talagrand's proof remains unpublished though. 

Let's  take a look at other easy (but  profitable) lemma.

\begin{lemma}
Say $\cP_1, \cP_2\subset \SB$. Assume there exists a Borel $\varphi:\Tr\to\SB$ such that $\varphi(\WF)\subset \cP_1$ and $\varphi(\IF)\subset \cP_2$. Let $A\subset \SB$ be an analytic subset containing $\cP_1$. Then $A\cap \cP_2\neq\emptyset$. In particular, if $\cP_2\subset\{X\in\SB|X\text{ is universal for }\SB\}$, we have that if $X$  is universal for $\cP_1$, then $X$ is universal for $\SB$.
\end{lemma}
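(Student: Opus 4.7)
The plan is to argue by contradiction using the fact that $\WF$ is complete coanalytic (hence coanalytic but not Borel, and in particular not analytic, since any set that is both analytic and coanalytic is Borel by Souslin's theorem).

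Suppose, for the sake of contradiction, that $A\cap \cP_2=\emptyset$. Since $\varphi(\IF)\subset \cP_2$, this gives $\varphi(\IF)\cap A=\emptyset$, equivalently $\varphi^{-1}(A)\cap \IF=\emptyset$, so $\varphi^{-1}(A)\subset \WF$. On the other hand, $\varphi(\WF)\subset\cP_1\subset A$, so $\WF\subset \varphi^{-1}(A)$. Therefore $\varphi^{-1}(A)=\WF$. But $\varphi$ is Borel and $A$ is analytic, so $\varphi^{-1}(A)$ is analytic. This would force $\WF$ to be analytic, contradicting that $\WF$ is complete coanalytic (equivalently, non Borel). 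Hence $A\cap \cP_2\neq\emptyset$.

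For the ``in particular'' statement, assume $\cP_2$ consists only of spaces universal for $\SB$, and let $X\in\cP_1$ be universal for $\cP_1$. Set
\[
A=\{Y\in\SB\mid Y\emb X\}.
\]
It is well known (see \cite{S}, \emph{theorem $3.5$}, pag. $80$) that $A$ is analytic in $\SB$, and by hypothesis $\cP_1\subset A$. By the first part, there exists $Y\in \cP_2\cap A$. Such a $Y$ is universal for $\SB$ and satisfies $Y\emb X$, so every separable Banach space embeds into $Y$ and hence into $X$. Thus $X$ is universal for $\SB$.

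The essentially only substantive ingredient is the descriptive set-theoretic fact that $\WF$ cannot be analytic; everything else is a straightforward manipulation of preimages. The main ``obstacle'' (really just a bookkeeping point) is to verify that the two containments $\varphi(\WF)\subset A$ and $\varphi(\IF)\cap A=\emptyset$ indeed force $\varphi^{-1}(A)$ to coincide with $\WF$, and this is immediate from the hypotheses on $\varphi$.
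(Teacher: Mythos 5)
Your proposal is correct and is essentially the paper's own argument: both rest on the observation that $\varphi^{-1}(A)$ is an analytic set containing $\WF$, which cannot equal $\WF$ since $\WF$ is not analytic, so some $\theta\in\IF$ has $\varphi(\theta)\in A\cap\cP_2$; the paper merely states this directly while you phrase it as a contradiction. The ``in particular'' part is handled identically in both.
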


\begin{proof}
As $\WF\subset \varphi^{-1}(A)$ and $\WF$ is non analytic we cannot have equality. Hence, there exists $\theta\in\IF$ such that $\varphi(\theta)\in A$. As $\varphi(\theta)\in \cP_2$ we are done. For the second claim, let $X$ be universal for $\cP_1$, define  $A=\{Y\in\SB|Y\emb X\}$, and apply the first claim.
\end{proof}

The proofs of the following corollaries are either contained in the previous sections or are just a slight modification of them.

\begin{cor}
If $X\in\SB$ is universal for either $\mathcal{U}$ or $\mathcal{W}$, then $X$ is universal for $\SB$. In particular, those classes admit no element universal for themselves.
\end{cor}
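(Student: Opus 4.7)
The plan is to apply the lemma immediately preceding the corollary to a single Borel map that simultaneously witnesses the situation for both $\cU$ and $\cW$. Let $\cE=(e_n)_{n\in\N}$ be a Schauder basis of $C(\Delta)$ and set $\varphi:=\varphi_{\cE,2}:\Tr\to\SB$, the Borel map constructed in Section 3. The strategy is to verify that $\varphi$ maps $\WF$ into $\cU\cap\cW$ and $\IF$ into the class of $\SB$-universal spaces, and then invoke the lemma twice.

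First I would check that $\varphi(\WF)\subset\cU\cap\cW$. For $\theta\in\WF$, \emph{Lemma \ref{ppoo}} (with $p=2$) guarantees that $\varphi(\theta)$ is reflexive; in particular $c_0\not\emb\varphi(\theta)$, which by the characterization of \cite{BBG} recalled in \emph{Theorem \ref{uc}} places $\varphi(\theta)$ in $\cU$. Moreover, reflexivity of $\varphi(\theta)$ forces every bounded operator on it to be weakly compact, so $\cW(\varphi(\theta))=\cL(\varphi(\theta))$ is trivially complemented in itself, i.e., $\varphi(\theta)\in\cW$. For $\theta\in\IF$, \emph{Lemma \ref{arroto}(i)} supplies an isomorphic embedding $C(\Delta)=\overline{\text{span}}\{\cE\}\emb\varphi(\theta)$, and since $C(\Delta)$ is itself universal for $\SB$, so is $\varphi(\theta)$. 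Hence $\varphi(\IF)\subset\cP_2:=\{Y\in\SB\mid Y\text{ is universal for }\SB\}$.

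Applying the lemma above with $\cP_1=\cU$ (respectively $\cP_1=\cW$) and $\cP_2$ as just defined then yields: if $X\in\SB$ is universal for $\cU$ (respectively for $\cW$), then $X$ is universal for $\SB$. This is the first assertion.

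For the ``in particular'' clause I would argue by contradiction. Suppose $X\in\cU$ (respectively $X\in\cW$) were universal for the class to which it belongs. The first assertion then makes $X$ universal for $\SB$, so $c_0\emb C(\Delta)\emb X$. But the result of \cite{BBG} invoked in the proofs of \emph{Theorem \ref{uc}} (respectively \emph{Theorem \ref{wwwww}}) states that $c_0\emb X$ forces $\cU(X)$ (respectively $\cW(X)$) to fail to be complemented in $\cL(X)$, contradicting $X\in\cU$ (respectively $X\in\cW$). There is no serious obstacle in the argument; everything reduces to choosing the right $\cE$ so that a single Borel map $\varphi_{\cE,2}$ records both the Tsirelson-type reflexivity needed to land in $\cU\cap\cW$ on $\WF$ and the $\SB$-universality on $\IF$, and then feeding this into the lemma.
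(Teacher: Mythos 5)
Your proof is correct and follows exactly the route the paper intends: the stated ``slight modification'' of the proofs of \emph{Theorems \ref{uc}} and \emph{\ref{wwwww}} is precisely to replace the $c_0$-basis by a basis of $C(\Delta)$ in $\varphi_{\cE,2}$, so that reflexivity (\emph{Lemma \ref{ppoo}}) places $\varphi(\WF)$ in $\cU\cap\cW$ while \emph{Lemma \ref{arroto}}(i) places $\varphi(\IF)$ among the universal spaces, and then to apply the second lemma of Section 9. The contradiction argument for the ``in particular'' clause via the \cite{BBG} characterization is also the intended one.
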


\begin{cor}
If $X\in\SB$ is universal for the class BS, then $X$ is universal for $\SB$. The same holds for \text{ABS} and WBS.
\end{cor}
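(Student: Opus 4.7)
The plan is to apply the previous lemma with $\cP_1$ equal to BS (respectively ABS, WBS) and $\cP_2:=\{Y\in\SB\mid C(\Delta)\emb Y\}$. Since $C(\Delta)$ is universal for $\SB$, every $Y\in\cP_2$ is itself universal, so $\cP_2\subset\{X\in\SB\mid X\text{ is universal for }\SB\}$. Once we produce a single Borel $\varphi:\Tr\to\SB$ with $\varphi(\WF)\subset\cP_1$ and $\varphi(\IF)\subset\cP_2$, the previous lemma yields the conclusion immediately.

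The natural choice is $\varphi=\varphi_{\cE,p}$ with $\cE$ a basis of $C(\Delta)$ and $p=2$. For $\theta\in\IF$, \emph{lemma \ref{arroto}(i)} gives $C(\Delta)\cong\overline{\text{span}}\{\cE\}\emb\varphi_{\cE,p}(\theta)$, so $\varphi(\IF)\subset\cP_2$ is automatic. For the WBS case, the proof of \emph{theorem \ref{wbsnonborel}} already establishes $\varphi(\WF)\subset\text{WBS}$. For BS and ABS, the remark following \emph{theorem \ref{wbsnonborel}} records that this same $\varphi$ also witnesses $\varphi(\WF)\subset\text{BS}$ and $\varphi(\WF)\subset\text{ABS}$: the transfinite inductions of \emph{theorems \ref{bsnonborel}} and \emph{\ref{qwe}} go through verbatim for every basic sequence $\tilde\cE$, using only Partington's $\ell_2$-sum stability (and, in the ABS case, the fact that $p=2$ combined with \emph{lemma \ref{arroto}(ii)} keeps $\ell_1$ out of $\varphi_{\cE,p}(\theta)$ for $\theta\in\WF$, so Beauzamy's ABS/WBS equivalence applies).

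With this $\varphi$ in hand, the previous lemma closes the argument: if $X\in\SB$ is universal for $\cP_1$, the set $A:=\{Y\in\SB\mid Y\emb X\}$ is analytic and contains $\cP_1\supset\varphi(\WF)$, so by the lemma there exists $\theta\in\IF$ with $\varphi(\theta)\in A$, i.e.\ $C(\Delta)\emb\varphi(\theta)\emb X$. Hence $X$ is universal for $\SB$. I do not anticipate a serious obstacle; all the heavy lifting was done in \emph{theorems \ref{bsnonborel}}, \emph{\ref{qwe}}, \emph{\ref{wbsnonborel}} and the preceding lemma. The only subtle point is selecting a single $\varphi$ that simultaneously handles all three properties, which is exactly what the remark after \emph{theorem \ref{wbsnonborel}} licenses.
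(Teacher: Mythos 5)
Your proposal is correct and follows exactly the route the paper intends: apply the preceding lemma with $\varphi_{\cE,2}$ for $\cE$ a basis of $C(\Delta)$, using the remark after \emph{theorem \ref{wbsnonborel}} to get $\varphi(\WF)\subset\text{BS}\cap\text{ABS}\cap\text{WBS}$ and \emph{lemma \ref{arroto}(i)} to get universality of $\varphi(\theta)$ for $\theta\in\IF$. You also correctly flagged the one genuine subtlety, namely that the $\ell_1$-basis construction of \emph{theorem \ref{bsnonborel}} must be replaced by the $C(\Delta)$-basis one so that the ill-founded images are universal.
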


\begin{cor}
If $X\in\SB$ is universal for the class S, then $X$ is universal for $\SB$.
\end{cor}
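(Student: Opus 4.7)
The plan is to invoke the Borel-map lemma immediately preceding the corollary with $\cP_1=\text{S}$ and $\cP_2=\{X\in\SB \mid X\text{ is universal for }\SB\}$. So my task is to build a single Borel function $\varphi:\Tr\to\SB$ such that $\varphi(\WF)\subset\text{S}$ and $\varphi(\IF)\subset\cP_2$. The choice used in \emph{theorem \ref{schurnonborel}} (the standard $c_0$-basis with $p=1$) achieves the first inclusion but not the second, because $c_0\hookrightarrow\varphi(\theta)$ for $\theta\in\IF$ does not make $\varphi(\theta)$ universal. I therefore keep $p=1$ but replace the basis.

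Let $\cE=(e_n)_{n\in\N}$ be a Schauder basis of $C(\Delta)$ (for instance the Faber–Schauder/Haar system), and set $\varphi=\varphi_{\cE,1}:\Tr\to\SB$. This is Borel by the proposition on $\ell_p$-Baire sums. For $\theta\in\IF$, \emph{lemma \ref{arroto}(i)} gives $\overline{\mathrm{span}}\{\cE\}=C(\Delta)\hookrightarrow\varphi(\theta)$. Since $C(\Delta)$ is universal for $\SB$, so is $\varphi(\theta)$, i.e.\ $\varphi(\theta)\in\cP_2$.

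For $\theta\in\WF$ I show $\varphi(\theta)\in\text{S}$ by transfinite induction on $o(\theta)$, stated uniformly over all basic sequences $\tilde{\cE}$ (so that passing from $\cE$ to $\cE^*$ is legal at each step), in direct analogy with the proof of \emph{theorem \ref{schurnonborel}}. The base case $o(\theta)=1$ gives a finite dimensional $\varphi_{\tilde\cE,1}(\theta)$, which is trivially Schur. For the inductive step with $o(\theta)=\alpha$, set $\Lambda=\{\lambda\in\N\mid(\lambda)\in\theta\}$; by \emph{proposition \ref{p2}} each $\theta(\lambda)$ has order strictly less than $\alpha$, so the inductive hypothesis applies to every $\varphi_{\tilde\cE^*,1}(\theta(\lambda))$. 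Using the canonical isomorphism
$$\varphi_{\tilde\cE,1}(\theta)\cong\R\oplus\Big(\bigoplus_{\lambda\in\Lambda}\varphi_{\tilde\cE^*,1}(\theta(\lambda))\Big)_{\ell_1}$$
together with B.\ Tanbay's stability of the Schur property under $\ell_1$-sums, we conclude $\varphi_{\tilde\cE,1}(\theta)\in\text{S}$. Hence $\varphi(\WF)\subset\text{S}$.

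Both hypotheses of the lemma being verified, the conclusion is immediate: if $X\in\SB$ is universal for $\text{S}$, then taking $A=\{Y\in\SB\mid Y\hookrightarrow X\}$ (which is analytic and contains $\text{S}=\cP_1$) forces $A\cap\cP_2\neq\emptyset$, so some universal element of $\SB$ embeds into $X$, making $X$ itself universal for $\SB$. There is no real obstacle; the only subtle point to double-check is that changing the anchor basis from the $c_0$-basis of \emph{theorem \ref{schurnonborel}} to a basis of $C(\Delta)$ does not damage the induction on the $\WF$-side. This is automatic because the induction is quantified over \emph{all} basic sequences, and at each step only the splitting above and Tanbay's theorem are used — neither of which is sensitive to the particular choice of $\cE$.
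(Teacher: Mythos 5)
Your proof is correct and is precisely the ``slight modification'' the paper intends: keep $p=1$ but replace the $c_0$-basis of \emph{theorem \ref{schurnonborel}} by a basis of $C(\Delta)$, so that for $\theta\in\IF$ the space $\varphi(\theta)$ contains $C(\Delta)$ and is therefore universal, while the well-founded side still lands in S by the same transfinite induction (uniform over basic sequences) via the $\ell_1$-sum decomposition and Tanbay's theorem, after which the second lemma of the non-universality section applies verbatim. This matches the paper's approach, so there is nothing further to add.
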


\begin{cor}
If $X\in\SB$ is universal for the class RNP, then $X$ is universal for $\SB$. The same holds for CCP and a-RNP.
\end{cor}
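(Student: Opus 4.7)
The plan is to apply the preceding lemma three times, once for each of RNP, CCP, and a-RNP, taking $\cP_2$ to be the class of separable Banach spaces that are universal for $\SB$ (or, in the a-RNP case, for $\SB_{\mathbb{C}}$). For each property $\cP_1$ under consideration, I need to exhibit a Borel map $\varphi:\Tr\to\SB$ sending $\WF$ into $\cP_1$ and $\IF$ into $\cP_2$; then the second conclusion of the preceding lemma finishes the job.

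In all three cases the same construction works. Let $\cE$ be a Schauder basis of $C(\Delta)$ (respectively of $C_{\mathbb{C}}(\Delta)$ when dealing with a-RNP) and set $\varphi := \varphi_{\cE,2}$. For $\theta\in\IF$, part (i) of \emph{lemma \ref{arroto}} gives $\overline{\text{span}}\{\cE\}\emb\varphi(\theta)$, so $\varphi(\theta)$ contains a copy of $C(\Delta)$ and is therefore universal for $\SB$; this yields $\varphi(\IF)\subset \cP_2$.

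For $\theta\in\WF$, \emph{lemma \ref{ppoo}} with $p=2$ tells us that $\varphi(\theta)$ is reflexive. Being separable and reflexive, $\varphi(\theta)=\varphi(\theta)^{**}$ is a separable dual, so by the Dunford-Pettis theorem it has the Radon-Nikodym property. This settles the RNP case immediately; for CCP we invoke the implication RNP $\Rightarrow$ CCP already used in the proof of \emph{theorem \ref{ccpnonborel}}, and for a-RNP we use \emph{theorem \ref{qaqpo}}. In each case $\varphi(\WF)\subset \cP_1$, as required.

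Having verified both inclusions, the corollary follows from the preceding lemma: if $X$ were universal for $\cP_1$, then $A=\{Y\in\SB\mid Y\emb X\}$ is an analytic subset of $\SB$ containing $\cP_1$, so the lemma produces $\theta\in\IF$ with $\varphi(\theta)\in A$, and since $\varphi(\theta)$ is universal for $\SB$ and embeds into $X$, so is $X$. There is no real obstacle here; the only bookkeeping point is to pass to the complex coding $\SB_{\mathbb{C}}$ (with $C_{\mathbb{C}}(\Delta)$ as the universal container) for the a-RNP statement, so that the analyticity of $\{Y\mid Y\emb X\}$ and of the Borel map $\varphi$ makes sense in the correct standard Borel space.
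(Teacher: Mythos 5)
Your proof is correct and is exactly the "slight modification of the previous sections" that the paper intends: take $\cE$ a basis of $C(\Delta)$ (resp. $C_{\mathbb{C}}(\Delta)$) with $p=2$, use \emph{lemma \ref{arroto}} to get universality on $\IF$ and \emph{lemma \ref{ppoo}} plus reflexive $\Rightarrow$ separable dual $\Rightarrow$ RNP $\Rightarrow$ CCP/a-RNP on $\WF$, and then invoke the preceding lemma. Nothing further is needed.
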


\textbf{Acknowledgements:} The author would like to thank his adviser J. Diestel for all the help and attention he gave to this paper. Without his suggestions and encouragement this paper would not have been written. The author also thanks Christian Rosendal for useful suggestions and comments.


\begin{thebibliography}{99}

\bibitem[AK]{AK}
F. Albiac, and N. J. Kalton, \emph{Topics in Banach Space Theory,} Graduate Texts in Mathematics, Vol. 233, Springer, New York, 2006. Zbl 1094.46002, MR2192298.

\bibitem[AD]{AD}
S. A. Argyros, and P. Dodos, \emph{Genericity and amalgamation of classes of Banach spaces,} Adv. Math. 209 (2007), no. 2, 666--748. Zbl 1109.03047, MR2296312.

\bibitem[AH]{ArgHay}
S. A. Argyros, and  R. G. Haydon, \emph{A hereditarily indecomposable $\mathcal{L}_\infty$-space that solves the scalar-plus-compact problem},  Acta Math. 206 (2011), no. 1, 1--54. Zbl 1223.46007, MR2784662

\bibitem[BBG]{BBG}
M. Bahreini, E. Bator, and I. Ghenciu, \emph{Complemented Subspaces of Linear Bounded Operadors}, Canad. Math. Bull.  55 (2012),  449--461. Zbl 1255.46006, MR2957262.

\bibitem[Be]{Be}
B. Beauzamy, \emph{Banach-Saks properties and spreading models,} Math. Scand. 44 (1979),  no. 2, 357--384. Zbl 0427.46007, MR0555227.

\bibitem[Bo]{Bo}
B. Bossard, \emph{A coding of separable Banach spaces. Analytic and coanalytic families of Banach spaces,} Fund. Math. 172 (2002), 117--152. Zbl 1029.46009, MR1899225.

\bibitem[BD]{BourgDael}
J. Bourgain, and F. Delbaen, \emph{A class of special $\mathcal{L}_\infty$ spaces},  Acta Math.  145 (1980), no. 3--4, 155--176. Zbl 0466.46024, MR0590288.

\bibitem[Di]{Di}
J. Diestel, \emph{A survey related to the  Dunford-Pettis property,} Contemp. Math. 2, Amer. Math. Soc., (1980),  15--60. Zbl 0571.46013, MR0621850.

\bibitem[DiU]{DiU}
J. Diestel, and J. J. Uhl Jr., \emph{Vector Measures,} Mathematical surveys and monographs, Vol. 15,  American Mathematical Society, Providence, R.I., 1977. Zbl 0369.46039, MR0453964.

\bibitem[DJT]{DJT}
J. Diestel, H. Jarchow, and A. Tonge, \emph{Absolutely Summing Operators,} Cambridge studies in advanced mathematics, Vol. 43, 1995. Zbl 0855.47016, MR1342297.

\bibitem[DiSe]{DS}
J. Diestel, and C. J. Seifert, \emph{The Banach-Saks ideal, I. Operators acting on $C(\Omega)$,} Comment. Math. 1 (1978), 109--118. Zbl 0385.46010, MR0504156.


\bibitem[D]{D}
P. Dodos, \emph{Banach Spaces and Descriptive Set Theory: Selected Topics}, Lecture Notes in Mathematics, Vol. 1993, Springer-Verlag, Berlin, 2010. Zbl 1215.46002, MR2598479.

\bibitem[Fa]{Fa}
H. Fakhoury, \emph{Sur les espaces de Banach ne contenant pas $\ell_1(\N)$},  Math. Scand. 41 (1977), 277--289. Zbl 0369.46021, MR0500085.


\bibitem[F]{F}
N. R. Farnum, \emph{The Banach-Saks theorem in $C(S)$}, Canad. J. Math. 26 (1974), 91--97. Zbl 0253.54026, MR0367636.


\bibitem[G]{G}
M. Girardi, \emph{Dentability, trees, and Dunford-Pettis operators on $L_1$,} Pacific Journal of Mathematics 148 (1991), no. 1, 59--79. Zbl 0677.46009, MR1091530.

\bibitem[HN]{HN}
S.-Z. Huang, and J. M. A. M. van Neerven, \emph{B-convexity, the analytic Radon-Nikodym property, and individual stability of $C_0$-semigroups,} J. Math. Anal. Appl. 231 (1999),  1--20. Zbl 0943.47029, MR1676753.

\bibitem[J]{J}
R. C. James, \emph{Bases and reflexivity of Banach spaces,} Ann. of Math. 52 (1950),  518--527. Zbl 0039.12202, MR0039915.

\bibitem[J2]{J2}
R. C. James, \emph{Unconditional bases and the Radon-Nikodym property,} Studia Mathematica 95 (1990), no. 3, 255--262. Zbl 0744.46010, MR1060728.

\bibitem[J3]{J3}
R. C. James, \emph{Structures of Banach spaces: Radon-Nikodym and other properties,} General Topology and Modern Analysis, Academic Press (1981), 347--363. Zbl 0456.46014, MR0619061.

\bibitem[JoL]{JoL}
W. B. Johnson, and J. Lindenstrauss, \emph{Handbook of the Geometry of Banach Spaces, Volume $1$,} North-Holland, 2001. Zbl 0970.46001, MR1863688.

\bibitem[Ke]{Ke}
A. S. Kechris, \emph{Classical Descriptive Set Theory,} Graduate Texts in Mathematics,
Vol. 156, Springer-Verlag, 1995. Zbl 0819.04002, MR1321597.

\bibitem[O]{O}
M. I. Ostrovskii, \emph{Topologies on the set of all subspaces of a Banach space and related questions of Banach space Geometry,} Quaestiones Math. 17 (1994), no. 3,  259--319. Zbl 0807.46014, MR1290670.

\bibitem[P]{P}
J. R. Partington, \emph{On the Banach-Saks property,} Math. Proc. Camb. Phil Soc. 82 (1977),  369--374. Zbl 0368.46018, MR0448036. 

\bibitem[Pe]{Pe}
A. Pelczynski, \emph{Banach spaces on which every unconditionally converging operator is weakly compact,} Bull. Acad. Polon. Sci. S\'er. Sci. Math. Astronom. Phys. 10 (1962),  641--648. Zbl 0107.32504, MR0149295.

\bibitem[Pe1]{Pel}
A. Pelczynski, \emph{Universal bases}, Studia Math. 32 (1969), 247--268. Zbl 0185.37401, MR0241954.

\bibitem[Pu]{Pu}
D. Puglisi, \emph{The position of $\cK(X,Y)$ in $\cL(X,Y)$,} To appear in  {\em Glasgow Math. Journal}. 

\bibitem[R]{R}
H. P. Rosenthal, \emph{On injective Banach spaces and the spaces $L_\infty(\mu)$ for finite measures $\mu$,} Acta Math. 124 (1970),  205--248. Zbl 0207.42803, MR0257721.

\bibitem[R2]{R2}
H. P. Rosenthal, \emph{A Characterization of Banach Spaces Containing $\ell_1$,} Proc. Nat. Acad. Sci. USA 71 (1974), no. 6, 2411--2413. Zbl 0297.46013, MR0358307.

\bibitem[S]{S}
Th. Schlumprech, \emph{Notes on Descriptive Set Theory, and Applications to Banach Spaces}, Class notes for Reading Course in Spring/Summer 2008.

\bibitem[T]{T}
B. Tanbay, \emph{Direct sums and the Schur property},  Turkish J. Math.  22 (1998), 349--354. Zbl 0923.46019, MR1675081. 
\end{thebibliography}
\end{document}